\documentclass[12pt,reqno]{amsart}
\usepackage{amscd,amssymb,amsthm,verbatim,slashed}
\setlength{\oddsidemargin}{0.in}
\setlength{\evensidemargin}{0.in}
\setlength{\textwidth}{6.46in}
\setlength{\textheight}{8.8in}

\newcommand{\C}{{\mathbb C}}

\newcommand{\ch}{\operatorname{ch}}

\newcommand{\dvol}{\operatorname{dvol}}
\newcommand{\End}{\operatorname{End}}

\newcommand{\HH}{\operatorname{H}}

\newcommand{\Hom}{\operatorname{Hom}}

\newcommand{\Id}{\operatorname{Id}}

\newcommand{\Image}{\operatorname{Im}}

\newcommand{\Ker}{\operatorname{Ker}}

\newcommand{\Pin}{\operatorname{Pin}}

\newcommand{\Q}{{\mathbb Q}}
\newcommand{\R}{{\mathbb R}}

\newcommand{\Ric}{\operatorname{Ric}}

\newcommand{\SO}{\operatorname{SO}}

\newcommand{\Spin}{\operatorname{Spin}}

\newcommand{\Tr}{\operatorname{Tr}}

\newcommand{\Z}{{\mathbb Z}}

\numberwithin{equation}{section}
\setcounter{tocdepth}{2}

\theoremstyle{plain}
\newtheorem{definition}[equation]{Definition}

\newtheorem{lemma}[equation]{Lemma}
\newtheorem{theorem}[equation]{Theorem}
\newtheorem{proposition}[equation]{Proposition}
\newtheorem{corollary}[equation]{Corollary}

\errorcontextlines=0

\theoremstyle{remark}

\newtheorem{remark}[equation]{Remark}

\usepackage{graphicx}
\input{amssym.def}
\input{amssym}
\input{epsf}
\usepackage{a4wide}

\vfuzz2pt 
\hfuzz12pt 

\setcounter{tocdepth}{2} 



\begin{document}

\pagestyle{plain}

\title[A spinorial quasilocal mass]
      {A spinorial quasilocal mass}

\author{John Lott\\
Department of Mathematics\\
University of California, Berkeley\\
Berkeley, CA  94720-3840\\
USA
}

\footnote{lott@berkeley.edu}

\begin{abstract}
We define a quasilocal energy of a compact manifold-with-boundary, relative to a background manifold.
The construction uses spinors on one manifold and the pullback of dual spinors from the other
manifold. We prove positivity results for the quasilocal energy, in both the Riemannian and Lorentzian settings. 
\end{abstract}

\date{December 3, 2023}

\maketitle

\section{Introduction} \label{sect1}

The mass of an asymptotically flat initial data set is a quantity computed from the asymptotic geometry.  It has the physical interpretation of measuring the gravitational energy within the
space.  The positive mass theorem says that under an appropriate positivity assumption on the curvature, 
the mass is positive \cite{Schoen-Yau (1981), Witten (1981)}.

It is an old problem to localize the measurement of gravitational energy to a compact region, i.e. to
give a good notion of a quasilocal mass. Some survey articles are
\cite{Szabados (2009),Wang (2015)}. 
In most of this paper we will refer more precisely to a quasilocal energy rather than a quasilocal mass.

In view of Witten's proof of the positive energy theorem for spin manifolds \cite{Witten (1981)},
an attractive approach to define a quasilocal energy
is to use spinors. This idea has been
considered by various people, notably Dougan-Mason \cite{Dougan-Mason (1991)} and Zhang
\cite{Zhang (2008),Zhang (2009)}.  There are two basic issues with such an approach. First,
the quasilocal energy is always defined relative to some background space.  For example, in the positive
energy theorem, the background space is Euclidean space.  It is not clear how to incorporate the
background space into the definition of the quasilocal energy.  The second issue is what boundary
conditions to impose on the spinor fields.  Witten's proof used spinor fields that asymptotically
approach a constant spinor with respect to the model flat space.  It is not evident what the analog 
should be for a compact manifold-with-boundary.

We show that both of these issues can be handled by the technique of
pulling back (dual) spinors from the background
space.  This technique has some history in the mathematical literature and
has been applied to prove sharp comparison results about scalar curvature by
Llarull \cite{Llarull (1996),Llarull (1998)}, Goette-Semmelmann \cite{Goette-Semmelmann (2002)},
the author \cite{Lott (2021)} and Wang-Xie-Yu \cite{Wang-Xie-Yu (2021)}.   One advantage
of the spinorial approach to the quasilocal energy is that the nonnegativity of the energy, under
some curvature conditions, falls out immediately.

In Section \ref{sect2} we define a quasilocal energy in the Riemannian setting.
Given compact 
connected $n$-dimensional 
Riemannian manifolds-with-boundary $(N, \partial N)$ and $(M, \partial M)$, suppose that
$f : (N, \partial N) \rightarrow (M, \partial M)$ is a spin map that is an isometry on
$\partial N$.  We think of $M$ as the background
space and $N$ as the manifold whose quasilocal energy we want to define, relative to $M$.  There is
a corresponding Clifford module $E$ on $N$. For simplicity, suppose that $n$ is even dimensional
and that $N$ and $M$ are spin, in
which case $E$ is the twisted
spinor bundle $S_N \otimes f^* S_M^*$.  Let $D^N$ be the corresponding Dirac-type operator on $C^\infty(N; E)$.

On $\partial N$, we have the identifications
\begin{equation}
E \Big|_{\partial N} \cong (S_N \otimes (\partial f)^* S_M^*)  \Big|_{\partial N}
\cong (S_N \otimes S_N^*)  \Big|_{\partial N} \cong \Lambda^* T^* N \Big|_{\partial N} \cong
\Lambda^* T^* \partial N \oplus \Lambda^* T^* \partial N,
\end{equation}
where the last isomorphism is a separation into tangential forms and forms with a normal
component. Let $\pi_+ : \Lambda^* T^* N \Big|_{\partial N} \rightarrow
\Lambda^* T^* \partial N$ be projection onto the tangential component.
There is a natural ``Dirichlet'' boundary condition for sections $\psi$ of $E$ given by 
$\pi_+ \left( \psi \Big|_{\partial N} \right) = 0$; let ${\mathcal D}$
denote the ensuing self-adjoint operator.  For simplicity, in this introduction we mostly consider the
case when $\Ker({\mathcal D}) = 0$. 

 To set up a boundary value problem, there is a
canonical choice of section of $\Lambda^* T^* \partial N$, namely the constant function $1$.
Let $e_n$ be the inward-pointing unit normal on $\partial N$.

\begin{definition}
If $C$ is a
nonempty union of connected components of $\partial N$, let $\psi \in C^\infty(N; E)$ be
the (unique) solution to $D^N \psi = 0$ with $\pi_+ \left( \psi \Big|_{\partial N} \right) = 1_C$, the characteristic
function of $C$.
The quasilocal energy associated to the boundary subset $C$
is \begin{equation} \label{1.2}
{\mathcal E}_C = - \int_{\partial N} \langle \psi,
  \nabla^N_{e_n} \psi \rangle \: \dvol_{\partial N}.
  \end{equation}
\end{definition}

Here are some basic properties of the quasilocal energy. Let $R_N$  and $R_M$ denote the scalar
curvatures of $N$ and $M$, respectively. Define $| \Lambda^2 df |$, the distortion of $f$ on $2$-forms, to be 
the pointwise norm $\sup_{v \wedge w \neq 0} \frac{|df(v) \wedge df(w)|}{|v \wedge w|}$.

We recall that the boundary map $\partial f$ is an isometry on $\partial N$.

\begin{theorem} \label{1.3}
1. If $f$ is an isometric diffeomorphism then ${\mathcal E}_{\partial N} = 0$. \\
2. If $M$ has nonnegative curvature operator and $R_N \ge |\Lambda^2 df| (f^* R_M)$ then
${\mathcal E}_C \ge 0$.
\end{theorem}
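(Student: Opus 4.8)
The plan is to run a Lichnerowicz–Weitzenböck argument for the twisted Dirac operator $D^N$ acting on the section $\psi$ produced in the Definition, integrating over $N$ and reading off the boundary term as $\mathcal E_C$.

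First I would establish the integration-by-parts identity. Since $D^N \psi = 0$, one has $0 = \int_N |D^N \psi|^2 = \int_N |\nabla^N \psi|^2 + \int_N \langle \mathcal R \psi, \psi \rangle + (\text{boundary term})$, where $\mathcal R$ is the Weitzenböck curvature endomorphism of the twisted spinor bundle $E = S_N \otimes f^* S_M^*$. The boundary term, by the usual manipulation $\int_N (\langle D^N\psi, D^N\psi\rangle - |\nabla^N\psi|^2 - \langle \mathcal R\psi,\psi\rangle) = \int_{\partial N}\langle e_n\cdot\nabla^N\psi + \nabla^N_{e_n}\psi, \psi\rangle$ or the cleaner $-\int_{\partial N}(\langle \nabla^N_{e_n}\psi,\psi\rangle + \langle e_n\cdot D^{\partial N}\psi,\psi\rangle + \ldots)$, should collapse: using the Dirichlet-type boundary condition $\pi_+(\psi|_{\partial N}) = 1_C$, the only surviving piece of the boundary integrand is $-\langle \psi, \nabla^N_{e_n}\psi\rangle$, i.e.\ exactly $\mathcal E_C$ as defined in \eqref{1.2}. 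I would check carefully that the tangential-derivative contributions to the boundary term vanish because $1_C$ is covariantly constant along $\partial N$ (it corresponds to the constant form $1$ under the identifications on $\partial N$, and $\partial f$ is an isometry so the twisted connection restricted to $\partial N$ annihilates it), and that the $\Ker(\mathcal D) = 0$ hypothesis guarantees $\psi$ exists and is unique.

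Next I would identify the curvature endomorphism $\mathcal R$. For the untwisted spinor bundle the Lichnerowicz formula gives $\frac14 R_N$. The twisting bundle is $f^* S_M^*$, so $\mathcal R = \frac14 R_N + \mathfrak{R}^{f^* S_M^*}$, where $\mathfrak{R}^{f^* S_M^*}$ is the curvature term built from the pulled-back curvature $f^* R^{S_M}$ contracted with Clifford multiplication. The key estimate — and this is where the hypotheses enter — is that when $M$ has nonnegative curvature operator, the endomorphism $\mathfrak{R}^{f^* S_M^*}$ is bounded below by $-\frac14 |\Lambda^2 df|\,(f^* R_M)$ on the relevant subbundle. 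This is precisely the Llarull/Goette–Semmelmann-type pointwise algebraic estimate: the curvature operator of $M$, being nonnegative, can be written as a sum of squares, and pulling back along $f$ distorts $2$-forms by at most $|\Lambda^2 df|$, so the Clifford action of $f^* R^{S_M}$ is controlled in operator norm by $\frac14 |\Lambda^2 df|\, (f^* R_M)$. I expect this algebraic lemma to be stated earlier or in Section \ref{sect2}; I would invoke it. Combining, $\langle \mathcal R\psi,\psi\rangle \geq \frac14\big(R_N - |\Lambda^2 df|(f^* R_M)\big)|\psi|^2 \geq 0$ pointwise under the curvature hypothesis.

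Putting the pieces together: $\mathcal E_C = \int_N |\nabla^N\psi|^2 + \int_N \langle\mathcal R\psi,\psi\rangle \geq 0$, which is statement 2. For statement 1, if $f$ is an isometric diffeomorphism then the pullback identifies everything tangentially and the constant section $1_{\partial N}$ is the restriction of a genuine parallel section; more concretely, when $f$ is an isometry the twisted bundle $E = S_N \otimes f^* S_M^*$ carries the global section corresponding to the identity, $\psi$ is this flat section, so $\nabla^N\psi = 0$ identically, hence $\mathcal E_{\partial N} = -\int_{\partial N}\langle\psi,\nabla^N_{e_n}\psi\rangle = 0$; alternatively it follows from the integral formula since both the bulk Dirichlet energy and the curvature term vanish. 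The main obstacle I anticipate is the boundary-term bookkeeping: pinning down the precise self-adjoint boundary problem $\mathcal D$, verifying that the inhomogeneous condition $\pi_+(\psi|_{\partial N}) = 1_C$ is the correct one to make the tangential Dirac-operator contributions on $\partial N$ drop out, and confirming the sign so that what remains is exactly $+\mathcal E_C$ rather than its negative. The interior Weitzenböck estimate, by contrast, is a routine adaptation of the scalar-curvature comparison machinery cited in the introduction.
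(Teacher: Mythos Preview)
Your approach to Part 2 is essentially the paper's: the integrated Lichnerowicz identity (equation (2.5)) gives
\[
-\int_{\partial N}\langle\psi,\nabla^N_{e_n}\psi\rangle\,\dvol_{\partial N}
= \int_N|\nabla^N\psi|^2\,\dvol_N + \tfrac14\int_N R_N|\psi|^2\,\dvol_N - (\text{twisted curvature term}),
\]
and the Goette--Semmelmann estimate (2.9) makes the right-hand side nonnegative under the hypotheses. One clarification: your worry about ``tangential-derivative contributions'' is misplaced. Identity (2.5) holds for \emph{any} $\psi$ with $D^N\psi=0$, with no boundary hypothesis; the only boundary term arising from integration by parts is already exactly $\int_{\partial N}\langle\psi,\nabla^N_{e_n}\psi\rangle$, so nothing needs to ``collapse''.

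For Part 1 there is a genuine gap. You observe that when $f$ is an isometric diffeomorphism the section $\psi=1$ (the identity endomorphism, i.e.\ the constant $0$-form under $E\cong\Lambda^*T^*N$) is parallel and gives boundary integral zero. That is correct but only shows ${\mathcal E}_{\partial N}\le 0$: in the paper's actual Definition 2.19, ${\mathcal E}_{\partial N}$ is an \emph{infimum} over the affine space ${\mathcal C}_{\partial N}$ modelled on $\Ker({\mathcal D})$. Your standing assumption $\Ker({\mathcal D})=0$ (borrowed from the introduction's simplification) is incompatible with this case: when $f$ is an isometric diffeomorphism, $\Ker({\mathcal D})$ consists of the harmonic forms on $N$ satisfying relative boundary conditions, which is never zero --- for instance $\dvol_N$ always lies in it. Nor can you fall back on Part 2 to get ${\mathcal E}_{\partial N}\ge 0$, since Part 1 carries no curvature hypothesis on $M$.

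The paper (Proposition 2.23) closes this gap by a different route: it identifies $D^N$ with $d+d^*$ and derives the explicit boundary formula
\[
-\int_{\partial N}\langle\psi,\nabla^N_{e_n}\psi\rangle\,\dvol_{\partial N}
= -\int_{\partial N}\bigl(d^*\psi\wedge *\psi + d^*{*}\psi\wedge *({*}\psi)\bigr).
\]
Every $\psi\in{\mathcal C}_{\partial N}$ has the form $1+\rho$ with $\rho$ a relative harmonic form, hence satisfies $d\psi=d^*\psi=0$, so the right-hand side vanishes for all such $\psi$ and the infimum is exactly $0$.
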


Part 1 of Theorem \ref{1.3} is consistent with the interpretation of ${\mathcal E}$ as being a
relative energy between $N$ and $M$.  Part 2 of Theorem \ref{1.3} is an immediate consequence
of a Bochner-type formula.  In particular, if $M$ is flat and $R_N \ge 0$ then ${\mathcal E}_C \ge 0$.

Although (\ref{1.2}) is an integral over the boundary of $N$, the solution $\psi$ to the boundary
value problem depends {\it a priori} on the interior geometry of $N$ and the map $f$.
When $M$ is a domain in $\R^n$, the quasilocal energy only depends on $f$ through its
boundary restriction $\partial f$.

We give a formula for ${\mathcal E}_C$ indicating that in the weak field limit, i.e. when $N$ is
a perturbation of $M$, the quasilocal energy is approximately equal to the Brown-York
energy \cite{Brown-York (1993)}.
We expect, but do not prove, that if one takes an
appropriate exhaustion of an asymptotically flat manifold by compact domains then their
quasilocal energies will approach a normalization constant times the ADM mass.
We compute ${\mathcal E}_{\partial N}$ when $N$ is conformally
related to $M$ and find that it is exactly the Brown-York energy.
We find a similar statement when $N$ is rotationally symmetric.

In Section \ref{sect3} we consider a Lorentzian quasilocal energy.  That is, we have
Lorentzian $(n+1)$-dimensional
manifolds $\overline{N}$ and $\overline{M}$, along with
compact connected spatial hypersurfaces-with-boundary $N \subset \overline{N}$ and 
$M \subset \overline{M}$. Again, we use a comparison map $f : (N, \partial N) \rightarrow
(M, \partial M)$ that is an isometry on $\partial N$ and look at the Dirac-type operator $D^N$ acting on sections of the
twisted spinor bundle $S_N \otimes f^* S_M^*$.  There is a new feature that, in general,
$D^N$ need not be formally self-adjoint on the interior of $N$.  We first look at the
case when $M$ is a totally geodesic hypersurface in $\overline{M}$.  Then there is no
problem with self-adjointness and results from Section \ref{sect2} extend.  The 
curvature condition
in part 2 of Theorem \ref{1.3} gets replaced by
$2 \left( T_{00} - \sqrt{- \sum_{\alpha = 1}^n T_{0\alpha} T^{0 \alpha}}
  \right) \ge
  | \Lambda^2 df | (f^* R_M)$, where $T_{AB} = {R}^{\overline{N}}_{AB} - \frac12
  {R}_{\overline{N}}
g^{\overline{N}}_{AB}$ is the Einstein tensor of $\overline{N}$.

We next look at the case when $\overline{M}$ is the flat Minkowski space
$\R^{n,1}$, but the hypersurface $M$ need not be totally geodesic.  It turns out that $D^N$ is formally
self-adjoint on the interior of $N$ if one uses an appropriate weighted
$L^2$-space.  Regarding boundary conditions, there are two natural choices.
The first one does not give a self-adjoint boundary value problem but nevertheless
one can use it to define a quasilocal energy ${\mathcal E}_C$.  The second one does
give a self-adjoint boundary value problem and gives rise to a quasilocal energy
${\mathcal E}^{sa}_C$.  They have the following properties.

\begin{theorem} \label{1.4}
1. If $f$ is an isometric diffeomorphism then ${\mathcal E}_{\partial N} = 0$. \\
2. If $\overline{N}$ satisfies the dominant energy condition
$T_{00} \ge \sqrt{- \sum_{\alpha = 1}^n T_{0\alpha} T^{0 \alpha}}$ then
${\mathcal E}_C \ge 0$ and ${\mathcal E}^{sa}_C \ge 0$.
\end{theorem}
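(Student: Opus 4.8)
The plan is to prove both parts with a single Weitzenb\"ock--Lichnerowicz--Witten identity for the twisted operator $D^N$, in the spirit of the Bochner argument behind Theorem~\ref{1.3} but with the Witten modification adapted to the timelike direction of $\overline N$. The structural point is that, since $\overline M=\R^{n,1}$ is flat, its spinor bundle is flat; hence the restriction $S_M=S_{\overline M}\big|_M$ carries a flat induced connection, $f^*S_M^*$ is flat, and the twisting-curvature contribution --- which in Theorem~\ref{1.3} (and in the totally geodesic case) produced the $|\Lambda^2 df|\,(f^*R_M)$ term --- disappears entirely. Writing $D^N$ through the hypersurface Dirac--Witten operator of $N\subset\overline N$ twisted by the flat bundle $f^*S_M^*$, and letting $\widetilde\nabla$ be the associated modified connection on $S_N\otimes f^*S_M^*$ (Levi-Civita plus the second fundamental form of $N$ in $\overline N$ on the $S_N$ factor, plus the induced Minkowski connection on the $f^*S_M^*$ factor), the identity reads, for any $\psi$ with $D^N\psi=0$,
\[
0=\int_N |D^N\psi|^2\, w\,\dvol_N
=\int_N|\widetilde\nabla\psi|^2\, w\,\dvol_N+\int_N\langle\mathcal T\psi,\psi\rangle\, w\,\dvol_N+\mathcal B(\psi),
\]
where $w>0$ is the weight that makes $D^N$ formally self-adjoint, $\mathcal T$ is the zeroth-order endomorphism built from the Einstein tensor $T_{AB}$ of $\overline N$, and $\mathcal B(\psi)$ is an integral over $\partial N$.

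First I would check that $\mathcal T$ equals, up to a positive constant, $T_{00}\,\Id+\sum_{\alpha=1}^{n}T_{0\alpha}\,c(e_0)c(e_\alpha)$ acting on the $S_N$ factor --- the standard Witten term --- whose smallest fiberwise eigenvalue is $T_{00}-\sqrt{-\sum_\alpha T_{0\alpha}T^{0\alpha}}$; thus $\mathcal T\ge 0$ as a Hermitian endomorphism precisely when $\overline N$ satisfies the stated dominant energy condition. Next I would evaluate $\mathcal B(\psi)$: for $\psi$ solving $D^N\psi=0$ with the respective boundary condition, integrating by parts in the weighted space expresses $\mathcal B(\psi)$ through $\nabla^N_{e_n}\psi$ and $\psi\big|_{\partial N}$, and --- using that $f$ is an isometry on $\partial N$, so that on $\partial N$ the bundle splits as in \eqref{1.2} and the prescribed tangential part is $1_C$ --- one shows that $\mathcal B(\psi)=-\mathcal E_C$ (resp.\ $\mathcal B(\psi)=-\mathcal E^{sa}_C$ for the self-adjoint condition and its solution). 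Combining $\mathcal T\ge 0$, $|\widetilde\nabla\psi|^2\ge 0$, $w>0$ and $D^N\psi=0$ in the displayed identity then gives $\mathcal E_C=\int_N|\widetilde\nabla\psi|^2\, w\,\dvol_N+\int_N\langle\mathcal T\psi,\psi\rangle\, w\,\dvol_N\ge 0$, and similarly $\mathcal E^{sa}_C\ge 0$; this is Part~2. The difference between the two boundary conditions enters here only through well-posedness of the solution $\psi$, not through the energy identity.

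For Part~1, when $f$ is an isometric diffeomorphism of the initial data (so $N$ with its embedding in $\overline N$ is identified with $M\subset\R^{n,1}$), I would exhibit the solution explicitly: a maximal set of parallel spinors of $\R^{n,1}$, restricted to $M\cong N$ and assembled into the ``identity'' section of $S_N\otimes f^*S_M^*\cong\End(S_N)$, is $\widetilde\nabla$-parallel, because the Witten modification is precisely the one under which restrictions of ambient parallel spinors are parallel; it therefore solves $D^N\psi=0$, and after the correct normalization of the reference spinors it restricts on $\partial N$ to $1_{\partial N}$. Since $\widetilde\nabla\psi=0$ and $\mathcal T$ does not contribute, the boundary term $\mathcal B(\psi)$ vanishes, whence $\mathcal E_{\partial N}=0$. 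The step I expect to be the main obstacle is the boundary-term bookkeeping in Part~2: because the first boundary condition does not define a self-adjoint problem, the integration by parts is not automatically harmless, and one must verify that the cross-terms between the Witten modification, the weight $w$, and the (non-self-adjoint) boundary condition combine so that $\mathcal B(\psi)$ is exactly $-\mathcal E_C$ --- rather than $-\mathcal E_C$ plus an uncontrolled error --- and similarly that the self-adjoint condition yields $-\mathcal E^{sa}_C$; correctly pinning down $w$ and the tangential boundary contribution so that $\mathcal B(\psi)$ genuinely reproduces \eqref{1.2} is where the real work lies.
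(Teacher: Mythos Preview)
Your approach is essentially the paper's own: Proposition~3.25 gives the Witten--Lichnerowicz identity (your displayed equation, with the weighted inner product $\langle\cdot,\cdot\rangle_{E,sa}$ playing the role of your weight $w$), and Proposition~3.42 deduces Part~2 immediately. For Part~1, Proposition~3.33 exhibits the constant section $1\in E\cong\Lambda^*T^*N\oplus\tau^0\wedge\Lambda^*T^*N$ as a $\nabla^N$-parallel solution, exactly as you propose.

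Two small points. First, your last paragraph overestimates the difficulty of the boundary bookkeeping. In the paper the quasilocal energy is \emph{defined} (Definition~3.30) as the infimum over $\psi\in\mathcal C_C$ of $-\int_{\partial N}\langle\psi,\nabla^N_{e_n}\psi\rangle_{E,sa}\,\dvol_{\partial N}$, and the integration by parts in (3.28) produces precisely this boundary term; there are no extra cross-terms from the weight or from the non-self-adjoint boundary condition, because the identity holds for any smooth $\psi$ with $D^N\psi=0$, independently of which boundary condition singled $\psi$ out. So your $\mathcal B(\psi)$ is automatically minus the quantity being infimized, not merely ``$-\mathcal E_C$ up to an uncontrolled error''.

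Second, in Part~1 you show that a particular $\psi$ gives value $0$, which only yields $\mathcal E_{\partial N}\le 0$ since $\mathcal E_{\partial N}$ is an infimum. The paper closes this by invoking Proposition~3.25 again: when $N=M\subset\R^{n,1}$ the Einstein tensor vanishes, so the dominant energy condition holds trivially and every $\psi\in\mathcal C_{\partial N}$ gives a nonnegative value, forcing $\mathcal E_{\partial N}=0$. You should make this step explicit rather than concluding $\mathcal E_{\partial N}=0$ directly from one solution.
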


Comparing ${\mathcal E}_C$ and ${\mathcal E}^{sa}_C$, the first one ${\mathcal E}_C$
has the advantage, from part 1 of Theorem \ref{1.4}, of vanishing when $C = \partial N$ and $f$ is an isometric
diffeomorphism.  On the other hand, ${\mathcal E}^{sa}_C$ has the advantage of coming
from a self-adjoint boundary value problem.

Finally, we extend the results to the case when
$\overline{M}$ is a more general Lorentzian manifold than Minkowski space.  It turns
out that it is enough for $\overline{M}$ to be a product spacetime $\R \times X$ for some
Riemannian manifold $X$.  This can be compared with Chen-Wang-Wang-Yau's construction of a
quasilocal energy when the background space is in a static spacetime
\cite{Chen-Wang-Wang-Yau (2018)}.

Regarding earlier work about a spinorial approach to quasilocal energy, Dougan and Mason
used the complex structure on an oriented surface to make an interesting choice of boundary condition
\cite{Dougan-Mason (1991)}.
Their approach is clearly restricted to $n=3$. Zhang used the pure Dirac operator to
define a quasilocal energy when the background space $M$ is a domain in $\R^3 \subset
\R^{3,1}$ \cite{Zhang (2008)}. His boundary condition came from using constant spinors
on $\R^{3,1}$; compare with Proposition \ref{3.22} of the present paper.  He imposed
geometric restrictions to ensure that the Dirac operator is invertible; compare with
Proposition \ref{3.21} of the present paper.  In \cite{Zhang (2009)} he also allowed $M$ 
to be a domain in a hyperbolic submanifold of $\R^{3,1}$.

I thank the Fields Institute for its hospitality while part of this research was performed, and
the referee for helpful comments.

\section{Riemannian case}  \label{sect2}

This section is about the quasilocal energy in the Riemannian case.  We begin with even dimensional spaces.
Section \ref{subsect2.1} has background information.
Section \ref{subsect2.2} has the definition of the quasilocal energy. In
Section \ref{subsect2.3} we prove its
basic properties. Section \ref{subsect2.4} computes the quasilocal energy for
conformally related manifolds.  Section \ref{subsect2.5} specializes to when the background space
$M$ is a domain in $\R^n$ and also treats rotationally symmetric manifolds $N$. Finally, Section \ref{subsect2.6} covers the odd dimensional case.

\subsection{Background} \label{subsect2.1}

Let $R$ denote scalar
curvature and let $H$ denote mean curvature. With our convention, $\partial B^n$ has
$H = n-1$.

  Let $N$ and $M$ be compact connected
  $n$-dimensional Riemannian manifolds with nonempty boundary.
  Let $f \: : \: (N, \partial N) \rightarrow (M, \partial M)$
  be a smooth spin map,
i.e. $TN \oplus f^* TM$ admits a spin structure. 
Equivalently, $f^* w_2(M) = w_2(N)$. Let
  $\partial f \: : \: \partial N \rightarrow \partial M$
  denote the restriction to the boundary. We assume that for each
  component $Z$ of $\partial N$, the restriction
  $\partial f \Big|_Z$ is an isometric diffeomorphism from $Z$ to $f(Z)$. 

For simplicity, we assume that $N$ and $M$ are spin and that $\partial f$ is a
spin diffeomorphism; the general case is
similar.
We assume first that $n$ is even. Then a spinor representation
$S^\pm$ of $\Spin(n)$
has complex dimension $2^{\frac{n}{2} - 1}$, while the Clifford algebra has a
faithful representation space $S = S^+ \oplus S^-$ of
complex dimension $2^{\frac{n}{2}}$. We let $S^*$ denote the complex
vector space of complex-linear functionals on $S$, i.e. no complex
conjugation involved.
Let $S_N$ denote the spinor bundle on $N$, and similarly for $S_M$.
Put $E = S_N \otimes f^* S^*_M$, a Clifford module on $N$.
(This Clifford module exists in the general case.)
We take the inner product $\langle \cdot, \cdot \rangle$ on $E$
to be $\C$-linear in the
second slot and $\C$-antilinear in the first slot.
As $\End(S_N) \cong \Lambda^*(T^*N)$,
we can identify $E \Big|_{\partial N}$ with
$\Lambda^*(T^* N) \Big|_{\partial N}$. (We require this property in the
general case, which amounts to a choice of spin structure on $E \Big|_{\partial N}$.)

Let $\{e_\alpha\}_{\alpha = 1}^n$ be
a local oriented orthonormal framing on $N$, with dual basis
$\{\tau^\alpha\}_{\alpha = 1}^n$.
Let $\omega^\alpha_{\: \: \beta \gamma}$ be the connection $1$-forms
with respect to $\{e_\alpha\}_{\alpha = 1}^n$. 
Let
$\{\widehat{e}_{\widehat{\alpha}}\}_{\widehat{\alpha} = 1}^n$ of $M$ be
a local oriented orthonormal framing of $M$.
Let
$\widehat{\omega}^{\widehat{\alpha}}_{\: \: \widehat{\beta} \gamma}$ be the pullbacks under $f$ of the
connection $1$-forms with respect to
$\{\widehat{e}_{\widehat{\alpha}}\}_{\widehat{\alpha} = 1}^n$.

To make things more symmetric, it will be convenient
to do local calculations on $S_N \otimes f^* S_M$ rather than
$S_N \otimes f^* S_M^*$.  In the Riemannian setting, $S_M^*$ is unitarily equivalent to $S_M$ and so we don't lose anything this way.
  Let $\{\gamma^\alpha\}_{\alpha = 1}^n$ be Clifford multiplication by
  $\{e_\alpha\}_{\alpha = 1}^n$,
  satisfying $\gamma^\alpha \gamma^\beta + \gamma^\beta \gamma^\alpha =
  2 \delta^{\alpha \beta}$.
  Let
  $\{\widehat{\gamma}^{\widehat{\alpha}} \}_{\widehat{\alpha} = 1}^n$
  be Clifford multiplication by 
  $\{\widehat{e}_{\widehat{\alpha}}\}_{\widehat{\alpha} = 1}^n$. They both have an odd grading, in the sense that
  in calculations we take $\gamma^\alpha$ to anticommute
  with $\widehat{\gamma}^{\widehat{\alpha}}$.
  
The covariant derivative on $E$ has the local form
\begin{equation} \label{2.2}
\nabla^N_\sigma = e_\sigma + \frac{1}{8} \omega_{\alpha \beta \sigma}
[\gamma^\alpha, \gamma^\beta] +
\frac{1}{8} \widehat{\omega}_{\widehat{\alpha} \widehat{\beta} \sigma}
     [\widehat{\gamma}^{\widehat{\alpha}},
       \widehat{\gamma}^{\widehat{\beta}}].
     \end{equation}
The Dirac operator on $C^\infty(N; E)$ is
$D^N = - \sqrt{-1} \sum_{\sigma=1}^n \gamma^\sigma \nabla^N_{\sigma}$.

We will take the orthonormal frame $\{e_\alpha\}$ at a point in
$\partial N$ so that $e_n$ is the inward-pointing unit normal vector
there. Let $\dvol_N$ denote the Riemannian density on $N$, and
similarly for $\dvol_{\partial N}$.
Given $\psi_1, \psi_2 \in C^\infty(N; E)$, we have
\begin{equation} \label{2.3}
\int_N \langle D^N \psi_1, \psi_2 \rangle \dvol_N -
\int_N \langle \psi_1, D^N \psi_2 \rangle \dvol_N  =
- \sqrt{-1} \int_{\partial N} \langle \psi_1, \gamma^n \psi_2 \rangle 
\dvol_{\partial N}.
\end{equation}
The Lichnerowicz formula implies
\begin{equation} \label{2.4}
  (D^N)^2 = (\nabla^N)^* \nabla^N + \frac{R_N}{4} - \frac14 
  [\gamma^\sigma, \gamma^\tau] \left(
  \frac18 \widehat{R}_{\widehat{\alpha} \widehat{\beta} \sigma \tau}
          [\widehat{\gamma}^{\widehat{\alpha}},
  \widehat{\gamma}^{\widehat{\beta}}] \right).
\end{equation}

We now extend some computations in \cite[Proof of Lemma 4.1]{Lott (2000)}.
Suppose that $D^N \psi = 0$. In what follows, summations over Greek letters will go from
$1$ to $n$ and summations over Latin letters will go from $1$ to $n-1$.
Equation (\ref{2.4}) implies that
\begin{align} \label{2.5}
0 = & 
  \int_N |\nabla^N \psi |^2 \: \dvol_N +
\int_{\partial N} \langle \psi, \nabla^N_{e_n} \psi \rangle \dvol_{\partial N}
  + \frac14 \int_N R_N |\psi|^2 \: \dvol_N - \\
  & \frac{1}{32} \int_N
  \widehat{R}_{\widehat{\alpha} \widehat{\beta} \sigma \tau} \langle \psi,
          [\gamma^\sigma, \gamma^\tau]
          [\widehat{\gamma}^{\widehat{\alpha}},
  \widehat{\gamma}^{\widehat{\beta}}] \psi \rangle. \notag
\end{align}
Now $D^N \psi = 0$ implies that on $\partial N$, we have
\begin{align} \label{2.6}
  \nabla^N_{e_n} \psi  = & - \gamma^n \sum_{i = 1}^{n-1}
  \gamma^i \nabla^N_i \psi \\
   = &
  - \gamma^n \sum_{i = 1}^{n-1} \gamma^i
  \left( \nabla^{\partial N}_i \psi + \frac12 \omega_{nj i} \gamma^n
  \gamma^j \psi + \frac12 \widehat{\omega}_{\widehat{n}
    \widehat{j}i}
  \widehat{\gamma}^{\widehat{n}} \widehat{\gamma}^{\widehat{j}}
  \psi \right) \notag \\
  = & D^{\partial N} \psi + \frac{H_{\partial N}}{2} \psi \:  - \:
  \frac12 \gamma^n \gamma^i \widehat{\gamma}^{\widehat{n}}
  \widehat{\gamma}^{\widehat{j}}
  \widehat{A}_{\widehat{j} i} \psi, \notag
\end{align}
where
\begin{equation} \label{2.7}
D^{\partial N} =  - \gamma^n \sum_{i = 1}^{n-1} \gamma^i
   \nabla^{\partial N}_i
\end{equation}
is the Dirac operator on $\partial N$ coupled to
$(\partial f)^* S^*_M$, $\widehat{A}$ is the second fundamental form
of $M$ and $\widehat{A}_{\widehat{j} i} = \widehat{A}(
\widehat{e}_{\widehat{j}},
(\partial f)_*(e_i))$.

\begin{remark} \label{2.8}
The factor of $\frac12$ in (\ref{2.6}) corrects the $\frac14$ that appears in
\cite{Lott (2000),Lott (2021)}.
\end{remark}

From \cite[Section 1.1]{Goette-Semmelmann (2002)}, 
if $M$ has nonnegative curvature operator then
\begin{equation} \label{2.9}
\frac{1}{32}  \widehat{R}_{\widehat{\alpha} \widehat{\beta} \sigma \tau} 
     [\gamma^\sigma, \gamma^\tau]
     [\widehat{\gamma}^{\widehat{\alpha}},
       \widehat{\gamma}^{\widehat{\beta}}]
     \le \frac14 | \Lambda^2 df | (f^* R_M) \Id_E.
  \end{equation}
(The paper \cite{Goette-Semmelmann (2002)} assumes that
$| \Lambda^2 df | \le 1$ but their argument shows the result stated in
(\ref{2.9}); c.f. \cite{Listing (2012)}.)

\begin{proposition} \label{2.10}
If $M$ has nonnegative curvature operator and
$R_N \ge | \Lambda^2(df) | (f^* R_M)$ then for any $\psi \in C^\infty(N; E)$, we have
$- \int_{\partial N} \langle \psi, \nabla^N_{e_n} \psi \rangle \dvol_{\partial N} \ge 0$. If $M$ has nonnegative curvature operator,
$R_N \ge | \Lambda^2(df) | (f^* R_M)$, $\psi$ is nonzero and 
$- \int_{\partial N} \langle \psi, \nabla^N_{e_n} \psi \rangle \dvol_{\partial N} 
= 0$ then  $R_N = | \Lambda^2(df) | (f^* R_M)$.
\end{proposition}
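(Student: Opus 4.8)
The plan is to start from the integrated Lichnerowicz identity \eqref{2.5}, which holds for any $\psi$ with $D^N \psi = 0$. The boundary integrand $\langle \psi, \nabla^N_{e_n} \psi \rangle$ appears there with a $+$ sign, so solving \eqref{2.5} for $-\int_{\partial N} \langle \psi, \nabla^N_{e_n} \psi \rangle \: \dvol_{\partial N}$ expresses it as
\[
-\int_{\partial N} \langle \psi, \nabla^N_{e_n} \psi \rangle \: \dvol_{\partial N}
= \int_N |\nabla^N \psi|^2 \: \dvol_N + \frac14 \int_N R_N |\psi|^2 \: \dvol_N
- \frac{1}{32} \int_N \widehat{R}_{\widehat{\alpha} \widehat{\beta} \sigma \tau} \langle \psi, [\gamma^\sigma, \gamma^\tau][\widehat{\gamma}^{\widehat{\alpha}}, \widehat{\gamma}^{\widehat{\beta}}] \psi \rangle \: \dvol_N.
\]
The last term is controlled by the curvature-operator estimate \eqref{2.9}: since $M$ has nonnegative curvature operator, the pointwise operator inequality there gives $\frac{1}{32}\widehat{R}_{\widehat{\alpha}\widehat{\beta}\sigma\tau}\langle\psi,[\gamma^\sigma,\gamma^\tau][\widehat{\gamma}^{\widehat{\alpha}},\widehat{\gamma}^{\widehat{\beta}}]\psi\rangle \le \frac14 |\Lambda^2 df|(f^*R_M)|\psi|^2$ at every point. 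Subtracting, the right-hand side is bounded below by $\int_N |\nabla^N\psi|^2 \: \dvol_N + \frac14 \int_N \bigl(R_N - |\Lambda^2 df|(f^*R_M)\bigr)|\psi|^2 \: \dvol_N$, and both terms are $\ge 0$ under the stated curvature hypothesis $R_N \ge |\Lambda^2 df|(f^*R_M)$. This proves the first assertion.

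For the rigidity statement, suppose $\psi$ is nonzero, $D^N\psi = 0$, the curvature hypotheses hold, and the boundary integral vanishes. Then the lower bound just derived forces $\int_N |\nabla^N\psi|^2 \: \dvol_N = 0$ and $\int_N \bigl(R_N - |\Lambda^2 df|(f^*R_M)\bigr)|\psi|^2 \: \dvol_N = 0$ and the pointwise curvature term achieves equality in \eqref{2.9} wherever $\psi \neq 0$. From the first, $\nabla^N\psi \equiv 0$, so $|\psi|^2$ is constant on the connected manifold $N$; since $\psi$ is nonzero, $|\psi|^2$ is a positive constant, hence $|\psi|^2 > 0$ everywhere. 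Feeding this back into the second equality, the integrand $\bigl(R_N - |\Lambda^2 df|(f^*R_M)\bigr)|\psi|^2$ is a nonnegative function with zero integral times a positive function, so $R_N = |\Lambda^2 df|(f^*R_M)$ pointwise on $N$, as claimed.

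The one point requiring care is that \eqref{2.5} was derived under the assumption $D^N\psi = 0$, whereas Proposition \ref{2.10} is stated for arbitrary $\psi \in C^\infty(N;E)$. I would resolve this by noting that the proposition is really about the quantity $-\int_{\partial N}\langle\psi,\nabla^N_{e_n}\psi\rangle$ as it is used in the definition of ${\mathcal E}_C$, i.e.\ evaluated on the harmonic solution $\psi$ of the boundary value problem — so the hypothesis $D^N\psi = 0$ is in force — or, alternatively, reading the statement as: for every $\psi$, the expression equals the manifestly nonnegative right-hand side above (which is how \eqref{2.5} is set up). Either reading makes the argument go through verbatim; I expect this bookkeeping about the domain of $\psi$, rather than any analytic difficulty, to be the only subtlety, since all the hard analysis (the Bochner formula, the boundary term identification in \eqref{2.6}, and the algebraic curvature estimate \eqref{2.9}) has already been established.
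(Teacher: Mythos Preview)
Your proof is correct and follows the same approach as the paper, which simply cites equations (\ref{2.5}) and (\ref{2.9}). Your observation about the missing hypothesis $D^N\psi = 0$ is well taken: equation (\ref{2.5}) is derived under that assumption, and the paper's one-line proof relies on it, so the proposition as stated appears to implicitly intend $\psi$ to satisfy $D^N\psi = 0$ (which is indeed how it is used throughout the rest of the paper).
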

\begin{proof}
This follows from (\ref{2.5}) and (\ref{2.9}).
\end{proof}

\subsection{Definition of the quasilocal energy} \label{subsect2.2}

Let $\epsilon$ be the $\Z_2$-grading operator on $S_N$.  Put
$\gamma^0 = i \epsilon$.  Let $T$
be the involution on $S_N \Big|_{\partial N}$ given by
\begin{equation} \label{2.11}
  T\eta = \gamma^0 \gamma^n \eta.
\end{equation}
Identifying
$f^* S^*_M \Big|_{\partial N}$ with $S^*_N \Big|_{\partial N}$, there is an induced involution $T$ on
\begin{equation}
\Lambda^* T^*N \Big|_{\partial N} \cong \End(S_N) \Big|_{\partial N}
\cong (S_N \otimes S_N^*) \Big|_{\partial N} \cong
S_N \Big|_{\partial N} \otimes f^* S^*_M \Big|_{\partial M}.
\end{equation}
We recall that an element of $\Lambda^* T^*N$ acts on $S_N$ by Clifford multiplication,
realizing the isomorphism $\Lambda^* T^*N \cong \End(S_N)$. Then
for $\omega \in \Lambda^* T^*N \Big|_{\partial N}$ and $\eta \in 
S_N \Big|_{\partial N}$, we have $T(\omega \cdot \eta) = T(\omega) \cdot T(\eta)$.
In terms of the orthonormal frame, $T$ acts on $T^*N \Big|_{\partial N}$ as
$\gamma^0 \gamma^n \widehat{\gamma}^{\widehat{0}} \widehat{\gamma}^{\widehat{n}}$.

Let $\tau^n$ be the
dual covector to $e_n$.

\begin{lemma} \label{2.12}
  Given $\omega \in \Lambda^* T^*N \Big|_{\partial N}$, write
  $\omega = \omega_+ + \tau^n \wedge \omega_-$ for
  $\omega_{\pm} \in \Lambda^* T^* \partial N$. Then
  $T \omega = \omega_+ - \tau^n \wedge \omega_-$.
\end{lemma}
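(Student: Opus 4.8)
The plan is to reduce the claim to a computation on the degree-one generators $\tau^\alpha$, using only the multiplicativity relation $T(\omega\cdot\eta)=T(\omega)\cdot T(\eta)$ recorded just above and the explicit formula $T\eta=\gamma^0\gamma^n\eta$ on $S_N\big|_{\partial N}$ from (\ref{2.11}).

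First I would identify $T$ under the isomorphism $\Lambda^* T^*N\big|_{\partial N}\cong\End(S_N)\big|_{\partial N}$ in which a form $\omega$ corresponds to its Clifford-multiplication operator $c_N(\omega)$. Feeding an arbitrary $\eta$ into $T(\omega\cdot\eta)=T(\omega)\cdot T(\eta)$ gives $\gamma^0\gamma^n\,c_N(\omega)\,\eta=c_N(T\omega)\,\gamma^0\gamma^n\,\eta$, hence $c_N(T\omega)=\gamma^0\gamma^n\,c_N(\omega)\,(\gamma^0\gamma^n)^{-1}$. Since $\gamma^0=i\epsilon$ anticommutes with $\gamma^n$ and $(\gamma^0)^2=-1$, $(\gamma^n)^2=1$, one has $(\gamma^0\gamma^n)^2=1$, so $(\gamma^0\gamma^n)^{-1}=\gamma^0\gamma^n$ and $T$ becomes conjugation by $g:=\gamma^0\gamma^n$. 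Conjugation being an algebra automorphism, $T$ is an automorphism of $\left(\Lambda^* T^*N\big|_{\partial N},\bullet\right)$, where $\bullet$ denotes the Clifford product, and so is determined by its values on $\tau^1,\dots,\tau^n$. (The frame description of $T$ on $T^*N\big|_{\partial N}$ in terms of $\gamma^0\gamma^n\widehat\gamma^{\widehat0}\widehat\gamma^{\widehat n}$ would give the same information, but the multiplicativity relation already suffices, and the hatted Clifford variables play no role in what follows.)

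Next I would compute $c_N(T\tau^\alpha)=g\,\gamma^\alpha\,g$. For $i\le n-1$, $\gamma^i$ anticommutes with both $\gamma^0$ and $\gamma^n$, hence commutes with $g$, so $g\gamma^i g=\gamma^i g^2=\gamma^i$, i.e.\ $T\tau^i=\tau^i$. For $\alpha=n$, $\gamma^n$ commutes with $\gamma^n$ and anticommutes with $\gamma^0$, so $g\gamma^n g=\gamma^0(\gamma^n)^2\gamma^0\gamma^n=(\gamma^0)^2\gamma^n=-\gamma^n$, i.e.\ $T\tau^n=-\tau^n$.

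Finally I would assemble the conclusion. At a point of $\partial N$ the covectors $\tau^1,\dots,\tau^{n-1}$ are orthonormal, so each tangential monomial $\tau^{i_1}\wedge\dots\wedge\tau^{i_k}$ (with $i_1<\dots<i_k\le n-1$) equals the Clifford product $\tau^{i_1}\bullet\dots\bullet\tau^{i_k}$, and likewise $\tau^n\wedge\tau^{i_1}\wedge\dots\wedge\tau^{i_k}=\tau^n\bullet\tau^{i_1}\bullet\dots\bullet\tau^{i_k}$. Applying the automorphism property, $T\omega_+=\omega_+$ for any tangential $\omega_+\in\Lambda^* T^*\partial N$, while $T(\tau^n\wedge\omega_-)=(T\tau^n)\bullet(T\omega_-)=-\,\tau^n\bullet\omega_-=-\,\tau^n\wedge\omega_-$. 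Adding the two parts yields $T\omega=\omega_+-\tau^n\wedge\omega_-$. I expect the only slightly delicate point — though not a genuine obstacle — to be the sign bookkeeping among $\gamma^0,\gamma^i,\gamma^n$ together with the identity $(\gamma^0\gamma^n)^{-1}=\gamma^0\gamma^n$; once those are in place the argument is essentially forced.
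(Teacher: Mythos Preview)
Your proof is correct and takes essentially the same approach as the paper. The paper computes directly that $\gamma^0\gamma^n$ commutes with $\gamma^I$ for tangential multi-indices $I$ and anticommutes with $\gamma^n\gamma^I$, while you factor the same commutation through the degree-one generators and invoke the algebra-automorphism property of conjugation; the underlying Clifford-algebra computation is identical.
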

\begin{proof}
  Given an increasing multi-index $I$ with entries between
  $1$ and $n-1$, and
  $\psi \in S_N \Big|_{\partial N}$, we have
  \begin{equation} \label{2.13}
    T(\gamma^I \psi) =  (\gamma^0 \gamma^n) \gamma^I \psi =
      \gamma^I (\gamma^0 \gamma^n) \psi = \gamma^I T(\psi)
  \end{equation}
  and
  \begin{equation} \label{2.14}
    T(\gamma^n \gamma^I \psi) =  (\gamma^0 \gamma^n) \gamma^n \gamma^I \psi =
    - \gamma^n \gamma^I (\gamma^0 \gamma^n) \psi = - \gamma^n \gamma^I T(\psi)
  \end{equation}
  Using the isomorphism $\Lambda^*T^*N \Big|_{\partial N}\cong \End(S_N) \Big|_{\partial N}$,
the lemma follows.
\end{proof}

Let $\pi_{\pm}$ denote orthogonal projection
onto the $\pm 1$-eigenspace of $T$,
acting on $\Lambda^* T^*N \Big|_{\partial N}$, so
$\Image(\pi_+)$ is isomorphic to $\Lambda^* T^* \partial N$.

Consider the operator $D^N$ acting on elements $\psi \in
C^\infty(N; E)$. The boundary condition
$\pi_+ \left( \psi \Big|_{\partial N} \right) = 0$, which is an analog of
Dirichlet boundary conditions, defines an elliptic boundary
condition for $D^N$ \cite[Section 7.5]{Baer-Ballmann (2012)}. One can check
that with the given boundary condition, $D^N$ is formally self-adjoint.
Let ${\mathcal D}$ denote the ensuing self-adjoint operator, densely
defined on $L^2(N; E)$ \cite[Chapter 7]{Baer-Ballmann (2012)}.  It has compact resolvent. The next proposition gives a sufficient
condition for ${\mathcal D}$ to be invertible.

\begin{proposition} \label{2.15}
Suppose that 
\begin{itemize}
\item $M$ has nonnegative curvature operator,
\item $\partial M$ has nonnegative second fundamental form,
\item $R_N \ge | \Lambda^2df | (f^* R_M)$,
\item 
$H_{\partial N} \ge (\partial f)^* H_{\partial M}$, and
\item $R_N > | \Lambda^2df | (f^* R_M)$
 somewhere or $H_{\partial N} > (\partial f)^* H_{\partial M}$
somewhere.
\end{itemize}
Then $\Ker({\mathcal D}) = 0$. 
\end{proposition}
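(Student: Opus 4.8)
The plan is to suppose $\psi \in \Ker({\mathcal D})$ and show $\psi = 0$. Since $\psi$ satisfies $D^N \psi = 0$ with the Dirichlet-type boundary condition $\pi_+(\psi|_{\partial N}) = 0$, I would first feed $\psi$ into the Bochner identity (\ref{2.5}). The term $\int_{\partial N} \langle \psi, \nabla^N_{e_n} \psi \rangle \, \dvol_{\partial N}$ must be rewritten using the boundary formula (\ref{2.6}), which expresses $\nabla^N_{e_n} \psi$ in terms of $D^{\partial N} \psi$, the mean curvature term $\frac{H_{\partial N}}{2} \psi$, and the term involving the second fundamental form $\widehat{A}$ of $M$. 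The key structural point is that the boundary condition forces $T(\psi|_{\partial N}) = \psi|_{\partial N}$, i.e. $\psi|_{\partial N}$ lies in the $+1$-eigenspace of the involution $T = \gamma^0 \gamma^n \widehat{\gamma}^{\widehat 0} \widehat{\gamma}^{\widehat n}$; by Lemma \ref{2.12} this is exactly the tangential-forms subspace.

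Next I would analyze each boundary term on this eigenspace. For the $D^{\partial N}$ term: $D^{\partial N} = -\gamma^n \sum_{i} \gamma^i \nabla^{\partial N}_i$ anticommutes with $T$ (this is the content of (\ref{2.14}), since each $\gamma^n \gamma^i$ with $i \le n-1$ flips the sign), so $D^{\partial N}$ maps the $+1$-eigenspace to the $-1$-eigenspace; hence $\langle \psi|_{\partial N}, D^{\partial N} \psi|_{\partial N} \rangle$ integrates to zero — or, more carefully, $\int_{\partial N} \langle \psi, D^{\partial N} \psi \rangle$ is purely imaginary because $D^{\partial N}$ is formally self-adjoint on $\partial N$, and it also lands in the orthogonal eigenspace, so it vanishes. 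For the $\widehat{A}$ term I need $\frac12 \langle \psi, \gamma^n \gamma^i \widehat{\gamma}^{\widehat n} \widehat{\gamma}^{\widehat j} \widehat{A}_{\widehat j i} \psi \rangle$, restricted to the $+1$-eigenspace, to be controlled from above by $\frac12 (\partial f)^* H_{\partial M} |\psi|^2$; this is where the hypothesis that $\partial M$ has nonnegative second fundamental form enters — one diagonalizes $\widehat A$ and uses that on the tangential eigenspace the relevant operator is bounded by the trace $(\partial f)^* H_{\partial M}$, analogously to how (\ref{2.9}) bounds the curvature term by $|\Lambda^2 df|(f^* R_M)$. Combining, the boundary integral in (\ref{2.5}) is bounded above by $-\frac12 \int_{\partial N} \left( H_{\partial N} - (\partial f)^* H_{\partial M} \right) |\psi|^2 \, \dvol_{\partial N}$.

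Plugging this back into (\ref{2.5}) together with the curvature estimate (\ref{2.9}) gives
\begin{equation*}
0 \ge \int_N |\nabla^N \psi|^2 \, \dvol_N + \frac14 \int_N \left( R_N - |\Lambda^2 df|(f^* R_M) \right) |\psi|^2 \, \dvol_N + \frac12 \int_{\partial N} \left( H_{\partial N} - (\partial f)^* H_{\partial M} \right) |\psi|^2 \, \dvol_{\partial N}.
\end{equation*}
All three terms on the right are nonnegative under the stated hypotheses, so each vanishes. In particular $\nabla^N \psi = 0$, so $\psi$ is parallel and $|\psi|$ is constant on the connected manifold $N$; and the strict-inequality hypothesis — $R_N > |\Lambda^2 df|(f^* R_M)$ somewhere, or $H_{\partial N} > (\partial f)^* H_{\partial M}$ somewhere — forces that constant to be zero, giving $\psi \equiv 0$, hence $\Ker({\mathcal D}) = 0$.

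The main obstacle I anticipate is the precise estimate on the $\widehat A$ boundary term: one must show that, restricted to the $+1$-eigenspace of $T$, the self-adjoint endomorphism $\frac12 \gamma^n \gamma^i \widehat{\gamma}^{\widehat n} \widehat{\gamma}^{\widehat j} \widehat{A}_{\widehat j i}$ has largest eigenvalue at most $\frac12 (\partial f)^* H_{\partial M}$ when $\widehat A \ge 0$. This requires a careful linear-algebra argument with the anticommuting Clifford variables $\gamma$ and $\widehat\gamma$, diagonalizing $\widehat A$ (using that $\partial f$ is an isometry so $\widehat A_{\widehat j i}$ is symmetric) and checking the signs of the resulting $\gamma^n \gamma^i \widehat\gamma^{\widehat n} \widehat\gamma^{\widehat i}$ factors on the tangential subspace — structurally parallel to the Goette–Semmelmann estimate (\ref{2.9}) but for a rank-one-type boundary operator rather than the curvature operator.
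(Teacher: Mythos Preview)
Your proposal is correct and follows essentially the same route as the paper's proof: combine the Bochner identity (\ref{2.5}) with the boundary expansion (\ref{2.6}), kill the $D^{\partial N}$ term via the anticommutation with $T$, bound the $\widehat{A}$ term by $(\partial f)^* H_{\partial M}|\psi|^2$, and conclude that $\psi$ is parallel with $|\psi|$ forced to zero by the strict inequality somewhere.

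Two small corrections. First, the Dirichlet condition $\pi_+(\psi|_{\partial N})=0$ places $\psi|_{\partial N}$ in the $-1$-eigenspace of $T$ (the normal-forms piece $\tau^n\wedge\Lambda^*T^*\partial N$), not the $+1$-eigenspace; this slip is harmless because the $D^{\partial N}$-vanishing works for either eigenspace and the $\widehat A$-bound does not use the eigenspace at all. Second, your ``main obstacle'' is already handled in the literature: the inequality $\langle\psi,\gamma^n\gamma^i\widehat\gamma^{\widehat n}\widehat\gamma^{\widehat j}\widehat A_{\widehat j i}\psi\rangle\le (\partial f)^*H_{\partial M}\,|\psi|^2$ is a pointwise operator estimate valid for \emph{all} $\psi$ (no eigenspace restriction needed), proved by the same square-completion trick as Goette--Semmelmann; the paper simply cites it as \cite[Lemma 2.1]{Lott (2021)}.
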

\begin{proof}
Suppose that $\psi \in \Ker({\mathcal D})$.  
From (\ref{2.5}) and (\ref{2.6}),
\begin{align} \label{2.16} 
0 =  & \int_N |\nabla^N \psi |^2 \: \dvol_N
  + \frac14 \int_N R_N |\psi|^2 \: \dvol_N   
- \frac{1}{32} \int_N
  \widehat{R}_{\widehat{\alpha} \widehat{\beta} \sigma \tau} \langle \psi,
          [\gamma^\sigma, \gamma^\tau]
          [\widehat{\gamma}^{\widehat{\alpha}},
  \widehat{\gamma}^{\widehat{\beta}}] \psi \rangle
 + \\
& \int_{\partial N} \langle \psi, D^{\partial N}
  \psi \rangle \: \dvol_{\partial N} + 
\frac12
  \int_{\partial N} H_{\partial N} |\psi|^2 \: \dvol_{\partial N} - 
 \frac12
  \int_{\partial N} \langle \psi, \gamma^n \gamma^{i}
  \widehat{\gamma}^n \widehat{\gamma}^{\widehat{j}}
  \widehat{A}_{\widehat{j}i} \psi
  \rangle \: \dvol_{\partial N}. \notag
\end{align}
As $T$ anticommutes with $D^{\partial N}$, we have
\begin{equation} \label{2.17}
\langle \psi, D^{\partial N}
  \psi \rangle = - \langle \psi, D^{\partial N}
 T \psi \rangle = 
\langle \psi, T D^{\partial N}
  \psi \rangle =
\langle T \psi, D^{\partial N}
  \psi \rangle =
- \langle \psi, D^{\partial N}
  \psi \rangle,
\end{equation}
so
$\langle \psi, D^{\partial N}
  \psi \rangle= 0$.  From \cite[Lemma 2.1]{Lott (2021)},
$\langle \psi, \gamma^n \gamma^{i}
  \widehat{\gamma}^n \widehat{\gamma}^{\widehat{j}}
  \widehat{A}_{\widehat{j}i} \psi
  \rangle \le (\partial f)^* H_{\partial M} |\psi|^2$.
Then (\ref{2.9}) and (\ref{2.16}) imply that $\nabla^N \psi = 0$. Since $\nabla^N$ is unitary,
it follows that $|\psi|^2$ is constant on the connected manifold $N$.  Then
(\ref{2.16}) implies that $|\psi|^2 = 0$.
\end{proof}

\begin{remark} \label{2.18}
    If the Euler characteristic of $M$ is nonzero and the degree of $f$ is nonzero then $\Ker({\mathcal D}) \neq 0$
    \cite[Section 2.2]{Lott (2021)}.
\end{remark}

We now discuss the boundary value problem.
Given $\sigma \in \Omega^*(\partial N)$ with $\pi_+(\sigma) = \sigma$, 
we wish to find some
$\psi \in C^\infty(N; E)$ so that
  $D^N \psi = 0$ and $\pi_+ \left( \psi \Big|_{\partial N} \right)
  = \sigma$. Suppose that we can do this. If
$\eta \in \Ker({\mathcal D})$ then from (\ref{2.3}), we see that
$\int_{\partial N} \langle \sigma, \gamma^n \eta \rangle \: \dvol_{\partial N} = 0$.
(Note that $\eta \Big|_{\partial N} \in \Image(\pi_-)$, so 
$\gamma^n \eta \Big|_{\partial N} \in \Image(\pi_+)$.)

Conversely, given $\sigma \in \Omega^*(\partial N)$ with $\pi_+(\sigma) = \sigma$, if 
$\int_{\partial N} \langle \sigma, \gamma^n \eta \rangle \: \dvol_{\partial N} = 0$
for all $\eta \in \Ker({\mathcal D})$ then there is some
$\psi \in C^\infty(N; E)$ so that
  $D^N \psi = 0$ and $\pi_+ \left( \psi \Big|_{\partial N} \right)
  = \sigma$ \cite[Lemma 14]{Farinelli-Schwarz (1998)}. The section
$\psi$ is unique up to addition by elements of $\Ker({\mathcal D})$.

Let $C$ be a nonempty union of connected components of $\partial N$.
To define the quasilocal energy, we would like to take $\sigma$ to be the characteristic function $1_C$. 
To be sure that $\psi$ exists, we may
have to slightly modify this choice.  Let 
$P_{\gamma^n \Ker({\mathcal D}) \Big|_{\partial N}}$ be 
orthogonal projection
from $\Omega^*(\partial N)$ to $\gamma^n \Ker({\mathcal D}) \Big|_{\partial N}$.
We put $\overline{1}_C = 1_C - 
P_{\gamma^n \Ker({\mathcal D}) \Big|_{\partial N}} 1_C$.
We then take $\sigma = \overline{1}_C$
and construct $\psi$ accordingly.

\begin{definition} \label{2.19}
Put 
\begin{equation} \label{2.20}
{\mathcal C}_C = \left\{ \psi \in C^\infty(N; E) \: : \: D^N \psi = 0,
\pi_+ \left( \psi \Big|_{\partial N} \right)
  = \overline{1}_C \right\}.
\end{equation}
  The quasilocal energy associated to the boundary subset $C$ is
\begin{equation} \label{2.21}
{\mathcal E}_C = \inf_{\psi \in {\mathcal C}_C} \left( - \int_{\partial N} \langle \psi,
  \nabla^N_{e_n} \psi \rangle \: \dvol_{\partial N} \right).
\end{equation}
\end{definition}

Note that ${\mathcal E}_C$ could be $-\infty$.

\begin{lemma} \label{2.22}
If $\Ker({\mathcal D}) = 0$ then ${\mathcal E}_C > - \infty$.  In general,
if ${\mathcal E}_C > - \infty$ then it is realized by some 
$\psi_{min} \in {\mathcal C}_C$.
\end{lemma}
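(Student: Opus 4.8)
The plan is to reduce everything to elementary facts about quadratic polynomials on a finite-dimensional real vector space, exploiting that $\mathcal{C}_C$ is an affine space modeled on $\Ker(\mathcal{D})$.

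First I would dispose of Part 1. If $\Ker(\mathcal{D}) = 0$ then $\overline{1}_C = 1_C$, and the solvability condition from the discussion preceding Definition \ref{2.19}, namely $\int_{\partial N} \langle \overline{1}_C, \gamma^n \eta \rangle \, \dvol_{\partial N} = 0$ for all $\eta \in \Ker(\mathcal{D})$, holds vacuously; hence $\mathcal{C}_C$ is nonempty, and uniqueness up to addition by $\Ker(\mathcal{D}) = 0$ forces $\mathcal{C}_C = \{\psi\}$ to be a single point. Therefore $\mathcal{E}_C = - \int_{\partial N} \langle \psi, \nabla^N_{e_n} \psi \rangle \, \dvol_{\partial N}$ is a finite real number.

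For Part 2, fix any $\psi_0 \in \mathcal{C}_C$, which exists by the same reference since $\overline{1}_C$ was defined precisely so as to be $L^2$-orthogonal to $\gamma^n \Ker(\mathcal{D})\big|_{\partial N}$. Writing $V = \Ker(\mathcal{D})$, which is finite-dimensional because $\mathcal{D}$ has compact resolvent, we have $\mathcal{C}_C = \psi_0 + V$. Set $F(\psi) = - \int_{\partial N} \langle \psi, \nabla^N_{e_n} \psi \rangle \, \dvol_{\partial N}$. By the Lichnerowicz identity in the form (\ref{2.5}), for any solution of $D^N \psi = 0$ one has $F(\psi) = \int_N |\nabla^N \psi|^2 \, \dvol_N + \frac14 \int_N R_N |\psi|^2 \, \dvol_N - \frac{1}{32} \int_N \widehat{R}_{\widehat{\alpha} \widehat{\beta} \sigma \tau} \langle \psi, [\gamma^\sigma, \gamma^\tau][\widehat{\gamma}^{\widehat{\alpha}}, \widehat{\gamma}^{\widehat{\beta}}] \psi \rangle \, \dvol_N$, and each term on the right is real (the last because $[\gamma^\sigma, \gamma^\tau]$ and $[\widehat{\gamma}^{\widehat{\alpha}}, \widehat{\gamma}^{\widehat{\beta}}]$ are skew-Hermitian and commute, so their product is Hermitian); thus $F$ is real-valued on solutions. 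Expanding, $F(\psi_0 + \eta) = F(\psi_0) + L(\eta) + q(\eta)$ for $\eta \in V$, where $q(\eta) = - \int_{\partial N} \langle \eta, \nabla^N_{e_n} \eta \rangle \, \dvol_{\partial N}$ is a real quadratic form on $V$ and $L(\eta) = - \int_{\partial N} \big( \langle \psi_0, \nabla^N_{e_n} \eta \rangle + \langle \eta, \nabla^N_{e_n} \psi_0 \rangle \big) \, \dvol_{\partial N}$ is $\R$-linear in $\eta$ (the first summand is $\C$-linear, the second $\C$-antilinear, and the sum is real since $F(\psi_0 + \eta)$, $F(\psi_0)$ and $q(\eta)$ all are).

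It then remains to invoke the elementary fact that a polynomial of degree at most two on a finite-dimensional real vector space which is bounded below attains its infimum: writing $q(\eta) = \langle A \eta, \eta \rangle$ for a symmetric $A$, boundedness below forces $A \ge 0$ and $L\big|_{\Ker A} = 0$ (otherwise one restricts $g(\eta) := F(\psi_0) + L(\eta) + q(\eta)$ to a line and lets the parameter tend to $\pm \infty$), after which $g$ is constant along $\Ker A$ and equals a positive-definite quadratic plus a linear term plus a constant on $(\Ker A)^\perp$, hence attains its minimum at some $\eta_{min}$. Since $\psi \mapsto \psi - \psi_0$ is a bijection $\mathcal{C}_C \to V$ with $F(\psi_0 + \eta) = g(\eta)$, if $\mathcal{E}_C = \inf_{V} g > - \infty$ then it is realized by $\psi_{min} = \psi_0 + \eta_{min} \in \mathcal{C}_C$. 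The only points needing care are the reality of $F$ and the fact that the $\eta$-dependence is genuinely polynomial of degree $\le 2$ over $\R$; there is no analytic obstacle, since compactness of the resolvent has already reduced the minimization to a finite-dimensional one.
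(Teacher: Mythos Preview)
Your proof is correct and follows essentially the same approach as the paper's: both observe that $\mathcal{C}_C$ is a finite-dimensional affine space modeled on $\Ker(\mathcal{D})$ and that the functional is a quadratic function there, so boundedness below implies the infimum is attained. You supply more detail than the paper does, notably the justification via (\ref{2.5}) that $F$ is real-valued and the explicit argument that a bounded-below real quadratic polynomial on a finite-dimensional space attains its minimum.
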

\begin{proof}
We know that ${\mathcal C}_C$ is a finite dimensional affine space modelled on $\Ker({\mathcal D})$.  If $\Ker({\mathcal D}) = 0$ then ${\mathcal C}_C$ has a single element and 
${\mathcal E}_C > - \infty$. 
If $\Ker({\mathcal D}) \neq 0$ then as
$- \int_{\partial N} \langle \psi,
  \nabla^N_{e_n} \psi \rangle \: \dvol_{\partial N}$ is a quadratic function on
  ${\mathcal C}_C$, the lemma follows.
\end{proof}

In some statements that follow we may implicitly assume that ${\mathcal E}_C > - \infty$. We will see relevant examples where this is the case
in Proposition \ref{2.26}, Proposition \ref{2.27}, Lemma \ref{2.42} and 
Proposition \ref{rot}.

\subsection{Properties of the quasilocal energy} \label{subsect2.3}

\begin{proposition} \label{2.23}
If $f : N \rightarrow M$ is an isometric diffeomorphism then
${\mathcal E}_{\partial N} = 0$.
\end{proposition}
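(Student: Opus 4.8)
The plan is to show that when $f$ is an isometric diffeomorphism, the relevant solution $\psi$ of the boundary value problem is the constant section $1 \in C^\infty(N;E)$, after which (\ref{2.21}) manifestly vanishes. Since $f$ is an isometry, it identifies $(N,\partial N)$ with $(M,\partial M)$ isometrically, the pulled-back connection on $f^* S_M^*$ agrees with the Levi-Civita spin connection on $S_N^*$, and the Clifford module $E = S_N \otimes f^* S_M^*$ becomes $S_N \otimes S_N^* \cong \End(S_N) \cong \Lambda^* T^* N$ as a bundle \emph{with connection}. Under this identification the section $1 \in \Lambda^0 T^* N \subset \Lambda^* T^* N$ corresponds to $\Id_{S_N}$, which is parallel, so $\nabla^N 1 = 0$ and in particular $D^N 1 = 0$.

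Next I would check the boundary condition. The constant function $1$ on $N$ restricts on $\partial N$ to the element $1 \in \Lambda^0 T^* \partial N$, which by Lemma \ref{2.12} lies in $\Image(\pi_+)$ (it has no $\tau^n$ component), and indeed $\pi_+(1|_{\partial N}) = 1_{\partial N}$. So $1$ solves $D^N \psi = 0$ with $\pi_+(\psi|_{\partial N}) = 1_{\partial N}$; comparing with Definition \ref{2.19}, one must account for the projection correction $\overline{1}_{\partial N} = 1_{\partial N} - P_{\gamma^n \Ker({\mathcal D})|_{\partial N}} 1_{\partial N}$. The point is that since $1$ itself is already an honest solution realizing the un-corrected boundary data $1_{\partial N}$, the solvability condition $\int_{\partial N}\langle 1_{\partial N}, \gamma^n \eta\rangle\,\dvol_{\partial N} = 0$ holds for all $\eta \in \Ker({\mathcal D})$ (this is exactly the necessary condition derived from (\ref{2.3}) applied to $\psi_1 = 1$, $\psi_2 = \eta$, using that $1$ and $\eta$ are harmonic and that $\eta|_{\partial N}$ is in $\Image(\pi_-)$). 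Hence the projection does nothing, $\overline{1}_{\partial N} = 1_{\partial N}$, and $1 \in {\mathcal C}_{\partial N}$.

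Finally, $1 \in {\mathcal C}_{\partial N}$ has $\nabla^N_{e_n} 1 = 0$, so the integrand in (\ref{2.21}) vanishes for this particular $\psi$, giving ${\mathcal E}_{\partial N} \le 0$. For the reverse inequality, note that in the isometric-diffeomorphism case $M$ has the same geometry as $N$, so $R_N = f^* R_M$ and $|\Lambda^2 df| = 1$, whence $R_N = |\Lambda^2 df|(f^* R_M)$; also a flat-to-itself map has $M$ with whatever curvature operator $N$ has — wait, that is not automatically nonnegative. Instead, the cleaner route is: every element of ${\mathcal C}_{\partial N}$ differs from $1$ by some $\eta \in \Ker({\mathcal D})$, and by (\ref{2.5}) applied to the harmonic section $\eta$ together with (\ref{2.17}) and the identification making the curvature term in (\ref{2.5}) equal to the curvature term for the twisted operator, one gets that the quadratic form $\psi \mapsto -\int_{\partial N}\langle\psi,\nabla^N_{e_n}\psi\rangle$ restricted to ${\mathcal C}_{\partial N}$ is constant (its value on $1 + t\eta$ is independent of $t$, because the cross term and the $\eta$-term both vanish: the cross term is $-2\re\int_{\partial N}\langle 1, \nabla^N_{e_n}\eta\rangle$ which by integration by parts against the harmonic $1$ and $\eta$ reduces to a $D^{\partial N}$-term killed as in (\ref{2.17}), and the pure $\eta$-term vanishes by the Bochner identity since $\nabla^N$-parallel fields on the closed-up geometry force $\nabla^N\eta$ to contribute nothing). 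Thus ${\mathcal E}_{\partial N} = 0$.

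The main obstacle I expect is the last paragraph: carefully verifying that the infimum is actually \emph{attained at} (and equals the value at) $\psi = 1$ rather than being strictly smaller, i.e. handling the kernel $\Ker({\mathcal D})$ correctly. The key technical fact needed is that the cross term $\re\int_{\partial N}\langle 1, \nabla^N_{e_n}\eta\rangle\,\dvol_{\partial N}$ vanishes for $\eta \in \Ker({\mathcal D})$; this should follow by writing $\nabla^N_{e_n}\eta$ via (\ref{2.6}) and using that $1$ is parallel and tangentially $T$-invariant while $\eta|_{\partial N}$ is in the $(-1)$-eigenspace of $T$, so the pairing of a $(+1)$-eigenvector with the boundary expansion of $\eta$ picks out only the $D^{\partial N}\eta$ piece, which integrates against $1$ to $\int_{\partial N}\langle D^{\partial N} 1, \eta\rangle = 0$ since $1$ is also annihilated by $D^{\partial N}$.
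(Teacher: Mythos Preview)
Your first two steps are correct and match the paper: under the isometric identification $E \cong \Lambda^* T^*N$, the section $1$ is parallel, solves $D^N 1 = 0$ with $\pi_+(1|_{\partial N}) = 1_{\partial N}$, and the solvability argument forces $\overline{1}_{\partial N} = 1_{\partial N}$, so $1 \in {\mathcal C}_{\partial N}$ and its boundary integral is zero.

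The genuine gap is exactly where you suspect: the treatment of the pure $\eta$-term for $\eta \in \Ker({\mathcal D})$. Your sentence ``the pure $\eta$-term vanishes by the Bochner identity since $\nabla^N$-parallel fields on the closed-up geometry force $\nabla^N\eta$ to contribute nothing'' does not work. Elements of $\Ker({\mathcal D})$ are \emph{not} $\nabla^N$-parallel in general; the Bochner identity (\ref{2.5}) gives $-\int_{\partial N}\langle\eta,\nabla^N_{e_n}\eta\rangle = \int_N |\nabla^N\eta|^2 + (\text{curvature terms})$, and neither summand has any reason to vanish individually without a curvature hypothesis you have explicitly (and rightly) rejected. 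Likewise, expanding via (\ref{2.6}) leaves a residual term $-\tfrac12\int_{\partial N} H_{\partial N}|\eta|^2 + \tfrac12\int_{\partial N}\langle\eta,\gamma^n\gamma^i\widehat\gamma^n\widehat\gamma^j\widehat A_{ji}\eta\rangle$ which is not manifestly zero.

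What the paper does instead is to exploit the identification more fully: when $f$ is an isometric diffeomorphism, $D^N$ \emph{is} $d+d^*$ on $\Omega^*(N)$. One first derives an alternative boundary formula
\[
-\int_{\partial N}\langle\psi,\nabla^N_{e_n}\psi\rangle\,\dvol_{\partial N} \;=\; -\int_{\partial N}\bigl(d^*\psi\wedge *\psi + d^*{*}\psi\wedge *({*}\psi)\bigr),
\]
which depends only on $d\psi$ and $d^*\psi$. Then one shows that any $\eta\in\Ker({\mathcal D})$ satisfying the relative boundary condition $i^*\eta=0$ is in fact a relative harmonic form with $d\eta = d^*\eta = 0$ separately (from $(d+d^*)\eta=0$ and $i^*\eta=0$ one gets $i^*d^*\eta=0$, hence $\eta$ is Hodge-harmonic with relative boundary conditions). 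Thus every $\psi = 1+\eta \in {\mathcal C}_{\partial N}$ has $d\psi = d^*\psi = 0$, and the alternative formula gives zero for \emph{every} such $\psi$, not just $\psi=1$. This is the missing ingredient you need.
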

\begin{proof}
  If $f$ is an isometric diffeomorphism then we can take
  $E \cong \Lambda^* T^* N$ and identify $D^N$ with $d+d^*$. 
  Letting $*$ denote the Hodge duality operator, if $\psi \in C^\infty(N; E)$ then
\begin{align} \label{2.24}
\int_N \langle \left( D^N \right)^2 \psi, \psi \rangle \: \dvol_N & = \int_N \langle (dd^* + d^* d)\psi, \psi \rangle \: \dvol_N \\
& =
\int_N \left( dd^*\psi \wedge *\psi + d^*d\psi \wedge *\psi \right) \notag \\
& =
\int_N \left( dd^*\psi \wedge *\psi + *d^*d\psi \wedge *(*\psi) \right)  \notag \\
& =
\int_N \left( dd^*\psi \wedge *\psi + dd^**\psi \wedge *(*\psi) \right) 
\notag \\
& = \int_N \left( |d^* \psi|^2 + |d\psi|^2 \right) \: \dvol_N +
\int_{\partial N} \left( d^*\psi \wedge *\psi + d^**\psi \wedge *(*\psi) \right). \notag
\end{align}
Comparing with equation (\ref{2.5}) shows
\begin{equation} \label{2.25}
- \int_{\partial N} \langle \psi,
  \nabla^N_{e_n} \psi \rangle \: \dvol_{\partial N} = - \int_{\partial N} \left( d^* \psi \wedge *\psi + d^* * \psi 
\wedge *(* \psi) \right),
\end{equation}
  
  There is a
  solution $\psi$ of $D^N \psi = 0$
  given by $\psi = 1$, with $\pi_+(1) = 1$.
It follows that $1$ is orthogonal to
$\gamma^n \Ker({\mathcal D}) \Big|_{\partial N}$, so 
$\overline{1}_{\partial N} = 1$.

If $i \: : \: \partial N \rightarrow N$ is the boundary inclusion then
an element of $\Ker({\mathcal D})$ is a differential 
form $\rho \in \Omega^*(N)$
satisfying $(d + d^*) \rho = 0$ and the relative (Dirichlet) boundary condition
$i^* \rho = 0$. Applying $i^*$, it follows that $i^* (d^* \rho) = 0$.
Squaring $d + d^*$, it follows that $(dd^* + d^* d) \rho = 0$. Hence
$\rho$ is a 
harmonic form $\alpha$ on $N$ satisfying relative (Dirichlet) boundary
  conditions. In particular, $d \rho = d^* \rho = 0$.

Hence elements $\psi$ 
of ${\mathcal C}_{\partial N}$ are of the form $1+\rho$ for such $\rho$.
As
 $d \psi = d^* \psi = 0$,
equation (\ref{2.25}) gives $- \int_{\partial N} \langle \psi,
  \nabla^N_{e_n} \psi \rangle \: \dvol_{\partial N} =0$, which proves the lemma.
  \end{proof}

We now give the basic positivity property of the quasilocal energy in the
physically relevant case.

\begin{proposition} \label{2.26}
  Suppose that $M$ has nonnegative curvature operator.
  If $R_N \ge | \Lambda^2 df | (f^* R_M)$ then ${\mathcal E}_C \ge 0$.  If
  $R_N \ge | \Lambda^2 df | (f^* R_M)$ and ${\mathcal E}_C = 0$ then
  $R_N = | \Lambda^2 df | (f^* R_M)$.
\end{proposition}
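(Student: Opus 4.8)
The plan is to read off both assertions from Proposition \ref{2.10}, using only that $\mathcal{E}_C$ is the infimum of the quadratic functional $Q(\psi) = - \int_{\partial N} \langle \psi, \nabla^N_{e_n} \psi \rangle \, \dvol_{\partial N}$ over the nonempty affine space $\mathcal{C}_C$.

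For the inequality $\mathcal{E}_C \ge 0$: the set $\mathcal{C}_C$ is nonempty because the replacement of $1_C$ by $\overline{1}_C$ was made precisely so that the Farinelli--Schwarz solvability criterion applies, and Proposition \ref{2.10} asserts $Q(\psi) \ge 0$ for \emph{every} $\psi \in C^\infty(N; E)$, in particular for every $\psi \in \mathcal{C}_C$. Taking the infimum gives $\mathcal{E}_C \ge 0$; in particular $\mathcal{E}_C > - \infty$, so by Lemma \ref{2.22} the infimum is attained at some $\psi_{min} \in \mathcal{C}_C$.

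For the rigidity statement, suppose $\mathcal{E}_C = 0$ and pick a minimizer $\psi_{min}$, so that $Q(\psi_{min}) = 0$. If $\psi_{min}$ is not identically zero, the equality clause of Proposition \ref{2.10} gives $R_N = |\Lambda^2 df|(f^* R_M)$ at once. Thus everything reduces to checking $\psi_{min} \ne 0$; since $\pi_+\left(\psi_{min}\big|_{\partial N}\right) = \overline{1}_C$, this is automatic as soon as $\overline{1}_C \ne 0$, which holds whenever $\Ker(\mathcal{D}) = 0$ (e.g.\ under the hypotheses of Proposition \ref{2.15}) and fails only in the degenerate situation where $1_C$ lies in the finite-dimensional subspace $\gamma^n \Ker(\mathcal{D}) \big|_{\partial N}$. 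In that case $0 \in \mathcal{C}_C = \Ker(\mathcal{D})$, so one would instead have to produce a nonzero minimizer directly --- e.g.\ by exhibiting a nonzero vector in the null space of the positive semidefinite form $Q\big|_{\Ker(\mathcal{D})}$, or by revisiting the Bochner identity (\ref{2.16}) for elements of $\Ker(\mathcal{D})$.

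Beyond Proposition \ref{2.10} --- which already encodes the Lichnerowicz formula (\ref{2.4}), the boundary computation (\ref{2.6}) and the Goette--Semmelmann estimate (\ref{2.9}) --- the argument is entirely soft. The one point where I expect to have to think is the nonvanishing of the minimizer in the rigidity statement, and that is the step I would flag as the main (if minor) obstacle.
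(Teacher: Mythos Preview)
Your approach is essentially the paper's own: the paper's proof is the single line ``This follows from (\ref{2.5}) and (\ref{2.9})'', which is exactly the content of Proposition \ref{2.10} (note that Proposition \ref{2.10} really requires $D^N\psi = 0$, since (\ref{2.5}) does --- harmless here because every $\psi \in \mathcal{C}_C$ is harmonic). Your careful flagging of the degenerate case $\overline{1}_C = 0$ in the rigidity statement actually goes beyond the paper's one-line proof, which does not address it.
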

\begin{proof}
This follows from (\ref{2.5}) and (\ref{2.9}).
  \end{proof}

\begin{proposition} \label{2.27}
  If $M$ is flat and $R_N \ge 0$ then ${\mathcal E}_C \ge 0$. If
  $M$ is flat, $R_N \ge 0$ and ${\mathcal E}_C = 0$ then $N$ is Ricci flat.
   \end{proposition}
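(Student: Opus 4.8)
The first assertion follows at once from Proposition \ref{2.26}: a flat $M$ has vanishing, hence nonnegative, curvature operator, and $f^* R_M = 0$, so the hypothesis $R_N \ge 0$ is precisely $R_N \ge | \Lambda^2 df | (f^* R_M)$; thus ${\mathcal E}_C \ge 0$, and ${\mathcal E}_C = 0$ forces $R_N = | \Lambda^2 df | (f^* R_M) = 0$. It therefore remains only to upgrade ``$R_N \equiv 0$'' to ``$\Ric_N \equiv 0$'' when ${\mathcal E}_C = 0$, and the plan is to do this by producing a parallel section of $E$ and reading off the Ricci tensor from the vanishing of its curvature.

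So assume ${\mathcal E}_C = 0$ and let $\psi \in {\mathcal C}_C$ be the minimizer supplied by Lemma \ref{2.22}, so that $- \int_{\partial N} \langle \psi, \nabla^N_{e_n} \psi \rangle \, \dvol_{\partial N} = {\mathcal E}_C = 0$. Since $M$ is flat, $\widehat{R}_{\widehat{\alpha} \widehat{\beta} \sigma \tau} = 0$, so (\ref{2.5}) reduces to $0 = \int_N |\nabla^N \psi|^2 \, \dvol_N + \frac14 \int_N R_N |\psi|^2 \, \dvol_N$; as $R_N \ge 0$, both terms vanish, in particular $\nabla^N \psi = 0$. The connection form in (\ref{2.2}) is skew-Hermitian, so $\nabla^N$ is metric and $|\psi|$ is constant on the connected manifold $N$; as in the proof of Proposition \ref{2.26} the section $\psi$ is not identically zero (here one uses that $\overline{1}_C \neq 0$, which holds e.g.\ whenever $\Ker({\mathcal D}) = 0$), so $\psi$ is nowhere zero.

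Finally I would deduce $\Ric_N = 0$ from the parallel section $\psi$ of $E = S_N \otimes f^* S_M^*$. The curvature of $\nabla^N$ on $E$ is $R^{S_N} \otimes \Id + \Id \otimes f^* R^{S_M^*}$, and the second summand vanishes since $M$ is flat, so $\left( R^{S_N}(e_\sigma, e_\tau) \otimes \Id \right) \psi = 0$ for all $\sigma, \tau$, where $R^{S_N}(e_\sigma, e_\tau)$ is the spinor curvature of $N$, a multiple of $R_N(e_\sigma, e_\tau, e_\alpha, e_\beta) \gamma^\alpha \gamma^\beta$. Contracting in $\sigma$ with $\gamma^\sigma$ and invoking the standard Clifford-contraction identity for the spinor curvature (the precise constant is immaterial) yields $\left( \gamma\!\left( \sum_\alpha \Ric_N(e_\tau, e_\alpha) \, e_\alpha \right) \otimes \Id \right) \psi = 0$ for every $\tau$, where $\gamma$ denotes Clifford multiplication. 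Since $\gamma(v)^2 = |v|^2$, Clifford multiplication by a nonzero vector is invertible, hence so is $\gamma(v) \otimes \Id$ on $E$; as $\psi$ is nowhere zero, $\sum_\alpha \Ric_N(e_\tau, e_\alpha) \, e_\alpha = 0$ for all $\tau$, i.e.\ $N$ is Ricci flat. The first statement being a one-line corollary of Proposition \ref{2.26}, the whole content lies in this last step, and the point needing care is the combination of the Clifford-contraction identity with the pointwise invertibility of Clifford multiplication by a nonzero vector (together with the non-vanishing of $\psi$, which we assume just as in Proposition \ref{2.26}).
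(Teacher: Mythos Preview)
Your proof is correct and follows the same approach as the paper. The paper's own proof is terse: it invokes Proposition \ref{2.26} for the first statement and defers the rigidity statement to \cite[Proof of Proposition 2.3]{Lott (2021)}; the argument you spell out---extracting a nowhere-zero parallel section of $E$ from equality in (\ref{2.5}), then contracting the spinor curvature identity $\sum_\sigma \gamma^\sigma R^{S_N}(e_\sigma,e_\tau) = \mathrm{const}\cdot\gamma(\Ric(e_\tau))$ and using invertibility of Clifford multiplication by nonzero vectors---is precisely the content of that reference. Your caveat about needing $\overline{1}_C \neq 0$ (so that $\psi \not\equiv 0$) is a fair point that the paper leaves implicit.
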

\begin{proof}
The first statement follows from Proposition \ref{2.26}.  The second 
statement follows as in \cite[Proof of Proposition 2.3]{Lott (2021)}.
\end{proof}

We now write ${\mathcal E}_C$ more explicitly as a boundary integral. From (\ref{2.6}), we have
\begin{align} \label{2.28}
  {\mathcal E}_C =  & - \int_{\partial N} \langle \psi_{min}, D^{\partial N}
  \psi_{min} \rangle \: \dvol_{\partial N} - \frac12
  \int_{\partial N} H_{\partial N} |\psi_{min}|^2 \: \dvol_{\partial N} + \\
  & \frac12
  \int_{\partial N} \langle \psi_{min}, \gamma^n \gamma^{i}
  \widehat{\gamma}^n \widehat{\gamma}^{\widehat{j}}
  \widehat{A}_{\widehat{j}i} \psi_{min}
  \rangle \: \dvol_{\partial N}. \notag
  \end{align}
We write $\psi_{min} \Big|_{\partial N} = 
\overline{1}_C + \tau^n \wedge \phi$ for some
$\phi \in \Omega^*(\partial N)$. Let $E^i$ denote exterior multiplication
by $\tau^i$ and $I^i$ denote interior multiplication by
$e_i$, when 
acting on $\Lambda^* T^* \partial N +
\tau^n \wedge \Lambda^* T^* \partial N$.

\begin{proposition} \label{2.29} 
If $\overline{1}_C = 1_C$ then
\begin{align} \label{2.30}
  {\mathcal E}_C = & - \frac12
  \int_{C} H_{\partial N} \: \dvol_{\partial N} + \frac12
  \int_{C} (\partial f)^*H_{\partial M} \: \dvol_{\partial N} - \\
& 
\frac12
\int_{\partial N} H_{\partial N} |\phi|^2 \: \dvol_{\partial N} + 
\int_{\partial N}
\frac12 \widehat{A}_{ji}
\langle \phi, (E^i - I^i) (E^j + I^j)
\phi \rangle \: \dvol_{\partial N}.
\notag
  \end{align}
\end{proposition}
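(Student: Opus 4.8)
The plan is to substitute $\psi_{min}\Big|_{\partial N}=\overline{1}_C+\tau^n\wedge\phi=1_C+\tau^n\wedge\phi$ into the three boundary integrals of (\ref{2.28}) and to evaluate them one at a time, using throughout the identification $E\Big|_{\partial N}\cong\Lambda^*T^*N\Big|_{\partial N}$ and the orthogonal splitting $\Image(\pi_+)\oplus\Image(\pi_-)=\Lambda^*T^*\partial N\oplus(\tau^n\wedge\Lambda^*T^*\partial N)$ from Lemma \ref{2.12}. Since $1_C\in\Image(\pi_+)$ and $\tau^n\wedge\phi\in\Image(\pi_-)$ are pointwise orthogonal and $\tau^n$ is a unit covector normal to $\partial N$, the norm term is immediate: $|\psi_{min}|^2=|1_C|^2+|\tau^n\wedge\phi|^2=1_C+|\phi|^2$, so the term $-\frac12\int_{\partial N}H_{\partial N}|\psi_{min}|^2$ of (\ref{2.28}) already produces the $-\frac12\int_C H_{\partial N}$ and $-\frac12\int_{\partial N}H_{\partial N}|\phi|^2$ terms of (\ref{2.30}).

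Next I would show that the $D^{\partial N}$ term of (\ref{2.28}) contributes nothing. The constant section $1$ of $\End(S_N)\Big|_{\partial N}$ is $\nabla^{\partial N}$-parallel --- under the identifications, $\nabla^{\partial N}$ is the tensor product of the intrinsic connection of $\partial N$ with the $\partial f$-pullback of that of $\partial M$, which agree because $\partial f$ is a spin isometry --- so $D^{\partial N}1_C=0$. A short computation with (\ref{2.7}), using that $\tau^n$ is $\nabla^{\partial N}$-parallel and $\gamma^n\gamma^n=1$, then gives $D^{\partial N}(\tau^n\wedge\phi)=-(d+d^*)\phi$, which lies in $\Image(\pi_+)$. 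Hence $\langle\psi_{min},D^{\partial N}\psi_{min}\rangle=-\langle1_C,(d+d^*)\phi\rangle=-1_C\,d^*\phi^{(1)}$, where $\phi^{(1)}$ is the degree-one part of $\phi$; its integral over $\partial N$ is $-\int_C d^*\phi^{(1)}\,\dvol_{\partial N}=0$ because each connected component of $\partial N$ is a closed manifold. So the first integral in (\ref{2.28}) vanishes.

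For the $\widehat A$-term of (\ref{2.28}), write $O=\gamma^n\gamma^i\widehat\gamma^{\widehat n}\widehat\gamma^{\widehat j}\widehat A_{\widehat j i}$ and expand $\langle\psi_{min},O\psi_{min}\rangle$ into four pieces along the splitting. Viewed on $\End(S_N)$, the operator $O$ has one left Clifford factor $\gamma^n$ and, coming from $\widehat\gamma^{\widehat n}$, one right Clifford factor; each of these anticommutes with $T$, while $\gamma^i$ and $\widehat\gamma^{\widehat j}$ with $i,j\le n-1$ commute with $T$. Hence $O$ commutes with $T$, preserves $\Image(\pi_\pm)$, and the two cross terms $\langle1_C,O(\tau^n\wedge\phi)\rangle$ and $\langle\tau^n\wedge\phi,O\,1_C\rangle$ vanish. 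On the identity section one finds $O\,1=\pm\,\widehat A_{\widehat j i}\,\gamma^n\gamma^i\gamma^j\gamma^n$ as an endomorphism, with normalized trace $\sum_i\widehat A_{\widehat i i}=(\partial f)^*H_{\partial M}$; equivalently, the constant section $1$ realizes equality in the pointwise estimate \cite[Lemma 2.1]{Lott (2021)} used in the proof of Proposition \ref{2.15}. So $\langle1_C,O\,1_C\rangle=(\partial f)^*H_{\partial M}\cdot1_C$, contributing $\frac12\int_C(\partial f)^*H_{\partial M}$. Finally, writing $\tau^n\wedge\phi$ as the endomorphism ``$\gamma^n$ followed by Clifford multiplication by $\phi$'', sliding the two $\gamma^n$'s together so that they cancel (and likewise the $\widehat\gamma^{\widehat n}$'s against the $\tau^n$'s on the other side), and translating the residual left and right Clifford multiplications by tangential covectors into $E^i-I^i$ and $E^j+I^j$ respectively, one obtains $\langle\tau^n\wedge\phi,O(\tau^n\wedge\phi)\rangle=\widehat A_{ji}\langle\phi,(E^i-I^i)(E^j+I^j)\phi\rangle$. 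Summing the three contributions gives (\ref{2.30}).

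The main obstacle is the Clifford bookkeeping in this last step: one must carefully track the signs produced by the odd-grading convention (the $\gamma$'s anticommuting with the $\widehat\gamma$'s) and by the chain $E\Big|_{\partial N}\cong S_N\Big|_{\partial N}\otimes f^*S^*_M\Big|_{\partial N}\cong\End(S_N)\Big|_{\partial N}\cong\Lambda^*T^*N\Big|_{\partial N}$ --- in particular, that the $\widehat\gamma$'s become right Clifford multiplications, that $O\,1$ has trace $+(\partial f)^*H_{\partial M}$ rather than $-(\partial f)^*H_{\partial M}$, and that the surviving operator on $\Lambda^*T^*\partial N$ is exactly $(E^i-I^i)(E^j+I^j)$ rather than one of its sign variants. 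One should also fix the normalization of the inner product on $\Lambda^*T^*N\Big|_{\partial N}$ induced from that on $E$ so that $|1|=1$.
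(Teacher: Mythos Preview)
Your proof is correct and follows essentially the same route as the paper's: you decompose $\psi_{\min}\big|_{\partial N}=1_C+\tau^n\wedge\phi$ and evaluate each of the three boundary integrals in (\ref{2.28}) using the explicit Clifford representation $\gamma^i=\sqrt{-1}(E^i-I^i)$, $\widehat\gamma^{\widehat j}=E^j+I^j$ on $\Lambda^*T^*N\big|_{\partial N}$, exactly as the paper does in Lemma~\ref{2.31}, (\ref{2.36}) and (\ref{2.37})--(\ref{2.38}). The only minor variation is in the $D^{\partial N}$ term: the paper kills the diagonal pairings $\langle 1_C,D^{\partial N}1_C\rangle$ and $\langle\tau^n\wedge\phi,D^{\partial N}(\tau^n\wedge\phi)\rangle$ pointwise via the anticommutation of $D^{\partial N}$ with $T$, and the cross term via self-adjointness of $D^{\partial N}$ on the closed boundary, whereas you compute $D^{\partial N}(\tau^n\wedge\phi)$ explicitly and integrate $d^*\phi^{(1)}$ over $C$ --- both arguments are fine (your intermediate formula carries a harmless sign slip: with the paper's conventions one gets $+(d+d^*)\phi$, not $-(d+d^*)\phi$).
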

\begin{proof}
On $\partial N$, we can represent $\gamma^i$ by
$\sqrt{-1}(E^{i} - I^{i})$, $\gamma^n$ by
$\sqrt{-1} (E^n - I^n)$, $\widehat{\gamma}^i$ by $E^i + I^i$ and
$\widehat{\gamma}^n$ by
$E^n + I^n$.

\begin{lemma} \label{2.31}
We have
  \begin{equation} \label{2.32}
\int_{\partial N} \langle \psi_{min}, D^{\partial N}
\psi_{min} \rangle \: \dvol_{\partial N} = 0.
  \end{equation}
\end{lemma}
\begin{proof}
  As $D^{\partial N}$ anticommutes with $T$, it follows that
\begin{equation} \label{2.33}
  \int_{\partial N} \langle 1_C, D^{\partial N}
  1_C \rangle \: \dvol_{\partial N} =
  \int_{\partial N} \langle \tau^n \wedge \phi, D^{\partial N}
(\tau^n \wedge \phi) \rangle \: \dvol_{\partial N} = 0.
\end{equation}
Next, since $\nabla^{\partial N}$ restricts to the Riemannian connection on
$\Lambda^* T^* \partial N$, we know that 
$\nabla^{\partial N} 1_C = 0$, so
\begin{equation} \label{2.34}
    \int_{\partial N} \langle \tau^n \wedge \phi, D^{\partial N}
    1_C \rangle \: \dvol_{\partial N} = 0
  \end{equation}
    and
\begin{equation} \label{2.35}
    \int_{\partial N} \langle 1_C, D^{\partial N}
  \tau^n \wedge \phi \rangle \: \dvol_{\partial N} =
  \int_{\partial N} \langle D^{\partial N} 1_C, 
  \tau^n \wedge \phi \rangle \: \dvol_{\partial N} = 0
\end{equation}
 The lemma follows.
  \end{proof}
  
Also
\begin{equation} \label{2.36}
  \int_{\partial N} H_{\partial N} |\psi_{min}|^2 \: \dvol_{\partial N} =
  \int_{C} H_{\partial N} \: \dvol_{\partial N}  +
  \int_{\partial N} H_{\partial N} |\phi|^2 \: \dvol_{\partial N}.
\end{equation}

Next, one can check that 
\begin{equation} \label{2.37}
  \langle \psi_{min}, \gamma^n \gamma^{i}
  \widehat{\gamma}^n \widehat{\gamma}^j \psi_{min} \rangle =
  \delta^{ij} 1_C + \langle \phi, (E^i - I^i) (E^j + I^j) \phi \rangle.
\end{equation}
Hence
\begin{equation} \label{2.38}
  \langle \psi_{min}, \gamma^n \gamma^{i}
  \widehat{\gamma}^n \widehat{\gamma}^j \widehat{A}_{ji} \psi_{min} \rangle =
  (\partial f)^* H_{\partial M} 1_C + \widehat{A}_{ji}
  \langle \phi, (E^i - I^i) (E^j + I^j) \phi \rangle.
\end{equation}

This proves the proposition.
\end{proof}

The first two terms on the right-hand side of (\ref{2.30}) give the Brown-York energy. (We have taken a particular normalization of the Brown-York energy.)
In the case when $N$ is a small perturbation of $M$, the function $\phi$ will also be small.
Hence Proposition \ref{2.29} shows that in this weak field limit, to leading order
the quasilocal energy
${\mathcal E}_C$ is the Brown-York energy.

\begin{remark}
With reference to Proposition \ref{2.29}, in the asymptotically flat case the geometry on most of a large domain $N$ will be a small perturbation of the Euclidean metric.  Furthermore, it's known that the Brown-York energy of large spheres approaches the ADM mass
(\cite{Fan-Shi-Tam (2009)} and references therein). This makes it plausible that under an appropriate exhaustion of an asymptotically flat manifold, the quasilocal mass will approach the ADM mass. That this is true in the rotationally symmetric case follows from Proposition \ref{rot}.
\end{remark}

\subsection{Conformal deformations} \label{subsect2.4}

In this section we use the conformal covariance of the Dirac operator to 
say something about the quasilocal energy; c.f. \cite[Section 4]{Zhang (2008)}.

Put $(N^\prime, g^\prime) = (N, e^{2 \phi} g)$, where 
$\phi \Big|_{\partial N}$ is locally constant and
$\phi \Big|_{C} = 0$.
Let $f : N \rightarrow M$ be as before.  Suppose that $\Ker({\mathcal D}) = 0$.
Then $\overline{1}_C = 1_C$.
The pure Dirac operator $Dirac^{N^\prime}$ is related to the pure Dirac operator
$Dirac^N$ by $Dirac^{N^\prime} = e^{- \frac{n+1}{2}\phi} Dirac^N
e^{\frac{n-1}{2}\phi}$ \cite[Section II]{Lott (1986)}.   
Thinking of $S_N \otimes f^* S_M^*$ as $\Hom(f^* S_M, S_N)$, the same argument gives $D^{N^\prime} = e^{- \frac{n+1}{2}\phi} D^N
e^{\frac{n-1}{2}\phi}$. 

Let $\psi$ be a 
minimizer for the quasilocal energy of $N$ relative to $M$, with
$\pi_+ \left( \psi \Big|_{\partial N} \right) = 1_C$. Putting $\psi^\prime =
e^{- \frac{n-1}{2} \phi} \psi$, it satisfies $D^{N^\prime} \psi^\prime = 0$
with $\pi_+ \left( \psi^\prime \Big|_{\partial N^\prime} \right) = 1_C$. As 
$\Ker({\mathcal D}^\prime) = 0$, it follows that 
\begin{equation} \label{2.39}
{\mathcal E}^\prime_C = - \int_{\partial N^\prime} \langle \psi^\prime,
\nabla^{N^\prime}_{e_n^\prime} \psi^\prime \rangle \dvol_{g^\prime}.
\end{equation}
From \cite[Section II]{Lott (1986)},
\begin{equation} \label{2.40}
\nabla^{N^\prime}_{e_n^\prime} = e^{- \phi} \left( \nabla^N_{e_n} +
\frac14 (e_\alpha \phi) [\gamma^n, \gamma^\alpha] \right).
\end{equation}
Applying this to (\ref{2.39}), and using the fact that $\phi \Big|_{\partial N}$
is locally constant,
one finds
\begin{equation} \label{2.41}
{\mathcal E}^\prime_C - {\mathcal E}_C = \frac{n-1}{2} \int_{\partial N} (e_n \phi) |\psi|^2 \:
\dvol_{\partial N}.
\end{equation}

We now specialize to the case when $N = M$, $f = \Id$ and $C = \partial N$. We no longer have $\Ker({\mathcal D}) = 0$.

\begin{lemma} \label{2.42}
If $e_n \phi \ge 0$ then
${\mathcal E}^\prime_{\partial N^\prime} = \frac{n-1}{2} \int_{\partial N} (e_n \phi) \:
\dvol_{\partial N}$. If $e_n \phi < 0$ then ${\mathcal E}^\prime_{\partial N^\prime} = - \infty$.
\end{lemma}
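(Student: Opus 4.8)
The plan is to reduce everything to the undeformed problem on $(N,g)$ with $f=\Id$ — where Proposition \ref{2.23} applies — by using the conformal covariance $D^{N^\prime} = e^{- \frac{n+1}{2}\phi} D^N e^{\frac{n-1}{2}\phi}$ together with (\ref{2.40}). The key simplification is that $\phi$ vanishes on $\partial N$ and is locally constant there, so its tangential derivatives vanish on $\partial N$; hence $g^\prime = g$ along $\partial N$, and the inward unit normal $e_n$, the involution $T$, the projections $\pi_\pm$, the fibre metric on $E|_{\partial N}$, and the boundary density $\dvol_{\partial N}$ all coincide with their undeformed counterparts.

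First I would describe ${\mathcal C}^\prime_{\partial N^\prime}$. Since $D^{N^\prime}\psi^\prime = 0$ iff $D^N(e^{\frac{n-1}{2}\phi}\psi^\prime)=0$, and $e^{\frac{n-1}{2}\phi}|_{\partial N}=1$, one gets $\Ker({\mathcal D}^\prime) = e^{-\frac{n-1}{2}\phi}\Ker({\mathcal D})$, where $\Ker({\mathcal D})$ — exactly as in the proof of Proposition \ref{2.23} — consists of the forms $\rho\in\Omega^*(N)$ with $d\rho = d^*\rho = 0$ and $i^*\rho = 0$. Moreover $\psi^\prime = e^{-\frac{n-1}{2}\phi}$ solves $D^{N^\prime}\psi^\prime = 0$ with $\pi_+\left(\psi^\prime|_{\partial N^\prime}\right) = 1$, so by the compatibility discussion preceding Definition \ref{2.19}, $1$ is orthogonal to $\gamma^n\Ker({\mathcal D}^\prime)|_{\partial N}$ and hence $\overline{1}_{\partial N^\prime} = 1$. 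Therefore
\[
{\mathcal C}^\prime_{\partial N^\prime} = \left\{ e^{-\frac{n-1}{2}\phi}(1+\rho) \: : \: \rho\in\Ker({\mathcal D}) \right\}.
\]

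Next I would evaluate the energy functional on this family. Writing $\psi^\prime = e^{-\frac{n-1}{2}\phi}\psi$ with $\psi = 1 + \rho$ and inserting (\ref{2.40}) into $-\int_{\partial N^\prime}\langle\psi^\prime, \nabla^{N^\prime}_{e_n^\prime}\psi^\prime\rangle\,\dvol_{g^\prime}$: along $\partial N$ every power of $e^\phi$ equals $1$, the commutator term $\frac14(e_\alpha\phi)[\gamma^n,\gamma^\alpha]$ contributes nothing (only $\alpha=n$ survives since $\phi$ is locally constant on $\partial N$, and $[\gamma^n,\gamma^n]=0$), and differentiating the conformal weight gives $\nabla^N_{e_n}\psi^\prime = -\frac{n-1}{2}(e_n\phi)\psi + \nabla^N_{e_n}\psi$ on $\partial N$. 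Hence
\[
- \int_{\partial N^\prime}\langle\psi^\prime, \nabla^{N^\prime}_{e_n^\prime}\psi^\prime\rangle\,\dvol_{g^\prime} = \frac{n-1}{2}\int_{\partial N}(e_n\phi)\,|\psi|^2\,\dvol_{\partial N} - \int_{\partial N}\langle\psi, \nabla^N_{e_n}\psi\rangle\,\dvol_{\partial N},
\]
and the last integral vanishes since $\psi\in{\mathcal C}_{\partial N}$ has $d\psi = d^*\psi = 0$ (apply (\ref{2.25}) as in the proof of Proposition \ref{2.23}). Finally, $i^*\rho = 0$ lets us write $\rho|_{\partial N} = \tau^n\wedge\rho_-$ with $\rho_-\in\Omega^*(\partial N)$, and the cross term $\langle 1,\rho\rangle$ vanishes pointwise on $\partial N$ for degree reasons, so $|\psi|^2|_{\partial N} = 1 + |\rho_-|^2$. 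Thus the energy of $e^{-\frac{n-1}{2}\phi}(1+\rho)$ equals $\frac{n-1}{2}\int_{\partial N}(e_n\phi)\,\dvol_{\partial N} + \frac{n-1}{2}\int_{\partial N}(e_n\phi)\,|\rho_-|^2\,\dvol_{\partial N}$.

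The two assertions then follow by choosing $\rho$. If $e_n\phi\ge 0$, the second integral is $\ge 0$ for every $\rho$ and is $0$ for $\rho = 0$, so the infimum is $\frac{n-1}{2}\int_{\partial N}(e_n\phi)\,\dvol_{\partial N}$, realized by $\psi^\prime = e^{-\frac{n-1}{2}\phi}$. If $e_n\phi < 0$, take $\rho = t\,\dvol_N$; this lies in $\Ker({\mathcal D})$ (it is closed, coclosed and tangentially zero) and its boundary restriction has $\rho_- = \pm t\,\dvol_{\partial N}$, so $|\rho_-|^2 = t^2$ and the energy equals $\frac{n-1}{2}(1+t^2)\int_{\partial N}(e_n\phi)\,\dvol_{\partial N}$, which tends to $-\infty$ as $t\to\infty$ because $\int_{\partial N}(e_n\phi)\,\dvol_{\partial N} < 0$; hence ${\mathcal E}^\prime_{\partial N^\prime} = -\infty$. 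The step I expect to be the main obstacle is the careful reduction on $\partial N$: one must verify that along the boundary the deformed normal, projections, fibre metric and volume density literally agree with the undeformed ones, so that (\ref{2.40}) and the conformal covariance of $D$ apply cleanly and the \emph{only} new boundary contribution is the $\frac{n-1}{2}(e_n\phi)$ term — after which everything is the computation already recorded in (\ref{2.41}), now extended to the case $\Ker({\mathcal D})\neq 0$.
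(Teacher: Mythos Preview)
Your proof is correct and follows essentially the same approach as the paper: reduce via conformal covariance to the undeformed problem, invoke Proposition \ref{2.23} to identify $\mathcal C_{\partial N}$ with $\{1+\rho\}$ and kill the undeformed boundary term via (\ref{2.25}), then minimize (resp.\ send to $-\infty$) using $\rho=0$ (resp.\ $\rho=t\,\dvol_N$). You supply more justification than the paper for $\overline{1}_{\partial N'}=1$ and for the vanishing of the cross and commutator terms on $\partial N$, but the argument is the same.
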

\begin{proof}
Note that $\phi \Big|_{\partial N}=0$.
Given $\psi^\prime \in C^\infty(N^\prime; E^\prime)$
satisfying $D^{N^\prime} \psi^\prime = 0$ and $\pi_+ \left( \psi^\prime \Big|_{\partial N} \right) = \overline{1}^\prime_{\partial N}$, put
$\psi = e^{\frac{n-1}{2} \phi} \psi^\prime$. Then
$D^{N} \psi = 0$ and $\pi_+ \left( \psi \Big|_{\partial N} \right) = \overline{1}_{\partial N}$.  The proof of Proposition \ref{2.23}
implies that $\psi = 1 + \rho$ for some harmonic form $\rho$ satisfying relative boundary conditions.  As
in the proof of Proposition \ref{2.23}, we know that
$ \int_{\partial N} \langle \psi,
\nabla^{N}_{e_n} \psi \rangle \dvol_{g} = 0$.
Since
\begin{equation} \label{2.43}
- \int_{\partial N^\prime} \langle \psi^\prime,
\nabla^{N^\prime}_{e_n^\prime} \psi^\prime \rangle \dvol_{g^\prime} +
\int_{\partial N} \langle \psi,
\nabla^{N}_{e_n} \psi \rangle \dvol_{g}
= \frac{n-1}{2} \int_{\partial N} (e_n \phi) |\psi|^2 \:
\dvol_{\partial N},
\end{equation}
if $e_n \phi \ge 0$ then we minimize $\frac{n-1}{2} \int_{\partial N} (e_n \phi) |\psi|^2 \:
\dvol_{\partial N}$ by taking $\psi = 1$.  As $\dvol_N \in \Ker({\mathcal D})$, if 
$e_n \phi < 0$ then we can make $\frac{n-1}{2} \int_{\partial N} (e_n \phi) |\psi|^2 \:
\dvol_{\partial N}$ arbitrarily negative by taking $\psi = 1 + s \dvol_N$ with $s$ large.
\end{proof}

One can check that the mean curvatures of $\partial N^\prime$ and
$\partial N$ are related by
\begin{equation} \label{2.44}
H_{\partial N^\prime} = H_{\partial N} - (n-1) e_n \phi.
\end{equation}
Hence if $e_n \phi \ge 0$ then
\begin{equation} \label{2.45}
    {\mathcal E}^\prime_{\partial N^\prime} = 
    \frac12 \int_{\partial N} \left( H_{\partial N} - 
    H_{\partial N^\prime} \right) \: \dvol_{\partial N} =
    \frac12 \int_{\partial N^\prime} \left( H_{\partial N} - 
    H_{\partial N^\prime} \right) \: \dvol_{\partial N^\prime},
    \end{equation}
    showing that the quasilocal energy equals the
    Brown-York energy.

In this conformal setting, we can also express the quasilocal energy as an interior integral.
If $n > 2$ then
\begin{align} \label{2.46}
\frac{n-1}{2} \int_{\partial N} (e_n \phi) \:
\dvol_{\partial N} & = \frac{n-1}{n-2} \int_{\partial N} \left( e_n 
e^{\frac{(n-2) \phi}{2}} \right) \: \dvol_{\partial N} \\
& =
- \frac{n-1}{n-2} \int_N \triangle
e^{\frac{(n-2) \phi}{2}} \: \dvol_N \notag \\
&  =
\frac14 \int_N e^{(\frac{n}{2}+1) \phi} \left( R_{g^\prime} - 
e^{-2 \phi} R_g \right) \dvol_N. \notag
\end{align}
Hence if $e_n \phi \ge 0$ then
\begin{equation} \label{2.47}
 {\mathcal E}^\prime_{\partial N^\prime}= 
\frac14 \int_N e^{(\frac{n}{2}+1) \phi} \left( R_{g^\prime} - 
e^{-2 \phi} R_g \right) \dvol_N 
\end{equation}
One can check that this is also true when $n=2$.

As $| \Lambda^2 (d\Id) | = e^{-2\phi}$, for $\Id \: : (N^\prime, g^\prime)
\rightarrow (N, g)$, equation (\ref{2.47}) is consistent with Proposition \ref{2.26}.

As a special case, let $(M, g_{Eucl})$ be a compact connected
codimension-zero submanifold of
$\R^2$ with nonempty boundary. If $\phi \in C^\infty(M)$ vanishes on
$\partial M$ and satisfies $e_n \phi \ge 0$ there, put $g^\prime = e^{2 \phi} g_{Eucl}$. 
Then the quasilocal energy of $\left( M, g^\prime \right)$,
relative to $\left( M, g_{Eucl} \right)$, is 
\begin{equation} \label{2.48}
{\mathcal E}_{\partial M} = \frac14 \int_M e^{2 \phi} R_{g^\prime} \: \dvol_{g_{Eucl}} =
\frac14 \int_M R_{g^\prime} \: \dvol_{g^\prime}.
\end{equation}

\subsection{Background space in $\R^n$} \label{subsect2.5}

In this section we make the quasilocal energy more explicit when the
background space $M$ is a domain in $\R^n$. We also treat the case of
rotationally symmetric $N$.

From Corollary \ref{2.27}, if $M \subset \R^n$ and $R_N \ge 0$ then
${\mathcal E}_C \ge 0$.

\begin{proposition} \label{2.49}
If $M \subset \R^n$ then two choices of $f : N \rightarrow M$ with the same boundary restriction $\partial f$ will give the same value for ${\mathcal E}_C$.
\end{proposition}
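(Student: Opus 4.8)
The plan is to show that the quasilocal energy $\mathcal{E}_C$ depends on $f$ only through the data that enters the boundary value problem and the energy functional, and to observe that when $M\subset\R^n$ this data is entirely determined by $\partial f$. First I would recall the ingredients that define $\mathcal{E}_C$: the Clifford module $E=S_N\otimes f^*S_M^*$ with its connection $\nabla^N$ (given locally by \eqref{2.2}), the Dirac operator $D^N$, the boundary identification $E\big|_{\partial N}\cong\Lambda^*T^*N\big|_{\partial N}$, the projection $\pi_+$, and finally the integrand $\langle\psi,\nabla^N_{e_n}\psi\rangle$ on $\partial N$. Among these, the only place $f$ enters through its interior behaviour is via the pulled-back connection $1$-forms $\widehat\omega$ on $f^*S_M$. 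The key point is that when $M$ is a domain in $\R^n$, the bundle $S_M$ is canonically trivial with the flat connection, so $f^*S_M$ carries the pulled-back flat connection; but a flat connection on a pulled-back bundle over a manifold with (connected) boundary components on which $\partial f$ is fixed is determined up to gauge equivalence by its holonomy, and here the holonomy is trivial since the connection on $S_M$ is the trivial one. Hence $\widehat\omega\equiv 0$ in a suitable global trivialization, independently of $f$; the covariant derivative \eqref{2.2} reduces to $\nabla^N_\sigma=e_\sigma+\tfrac18\omega_{\alpha\beta\sigma}[\gamma^\alpha,\gamma^\beta]$ acting on $S_N\otimes\underline{\C^{2^{n/2}}}$, with no reference to $f$ at all.

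The second step is to check that the boundary data also only sees $\partial f$. The identification $E\big|_{\partial N}\cong\Lambda^*T^*N\big|_{\partial N}$ uses $f^*S_M^*\big|_{\partial N}\cong (\partial f)^*S_M^*\big|_{\partial N}\cong S_N^*\big|_{\partial N}$, where the last isomorphism is built from the spin diffeomorphism $\partial f$ and hence depends only on $\partial f$; likewise $\pi_+$ is defined from the involution $T$ of \eqref{2.11}, which is written purely in terms of $\gamma^0,\gamma^n$ and $\widehat\gamma^{\widehat 0},\widehat\gamma^{\widehat n}$, and under the above trivialization these hatted Clifford elements on $\partial N$ are exactly the ones transported by $\partial f$. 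So both the operator $D^N$ (acting on sections of the $f$-independent bundle $S_N\otimes\underline{\C^{2^{n/2}}}$) and the boundary condition $\pi_+(\psi\big|_{\partial N})=\overline 1_C$ are the same for any two maps with boundary restriction $\partial f$. Consequently $\Ker(\mathcal{D})$, the affine space $\mathcal{C}_C$, the minimizer $\psi_{min}$, and the integrand $-\langle\psi,\nabla^N_{e_n}\psi\rangle$ all coincide, giving the same $\mathcal{E}_C$.

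I expect the main obstacle to be the careful bookkeeping of trivializations: one must produce a single global trivialization of $f^*S_M$ (equivalently, of $S_M$ over the image, using that a domain in $\R^n$ has canonically trivial spinor bundle) that is simultaneously (i) flat, so that $\widehat\omega=0$, and (ii) compatible on each boundary component with the trivialization induced from $S_N\big|_{\partial N}$ via $\partial f$, so that the boundary identifications and $\pi_+$ are literally unchanged. Two different $f$'s with the same $\partial f$ then yield identical trivialized bundles-with-connection on $N$, not merely isomorphic ones in an uncontrolled way. A clean way to phrase this is: the flat connection on $f^*S_M$ over $N$ has trivial holonomy (since $S_M\to M\subset\R^n$ is trivial flat), so it is gauge equivalent to the product connection, and the gauge can be normalized on $\partial N$ using $\partial f$; since $\partial f$ is fixed, the normalization is the same for both maps. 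Once this is set up, everything in Section \ref{subsect2.2} — the definition of $\mathcal{D}$, of $\overline 1_C$, of $\mathcal{C}_C$, and formula \eqref{2.21} — is manifestly a functional of $(N,\partial N,\partial f, C)$ alone, and the proposition follows.
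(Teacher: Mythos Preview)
Your proposal is correct and follows the same approach as the paper: the construction of $\mathcal{E}_C$ depends on $f$ only through the boundary restriction $\partial f$ and the pulled-back connection on $S_M^*$, and when $M\subset\R^n$ the bundle $S_M^*$ is trivial with trivial connection, so the pullback is independent of the choice of $f$. The paper's proof is a two-sentence version of exactly this argument; your additional bookkeeping about normalizing the trivialization on $\partial N$ is more caution than is strictly needed (since $S_M$ over a domain in $\R^n$ is \emph{canonically} the product bundle, $f^*S_M$ is literally $N\times\C^{2^{n/2}}$ with the product connection, not merely gauge-equivalent to it), but it does no harm.
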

\begin{proof}
This is because the construction of ${\mathcal E}_C$ only involves $f$ through
the boundary condition on $\partial f$ and the pullback of 
the connection on $S_M^*$. Since $S_M^*$ is a trivial bundle with trivial
connection, the pullbacks of $S_M^*$ under two choices of $f$ will be equivalent. 
\end{proof}

Of course, ${\mathcal E}_C$ still depends on the intrinsic geometry of $\partial N$, the extrinsic geometry of $\partial N$ (resp. $\partial M$) in $N$ (resp. $M$), and {\it a priori} the interior geometry of
$N$.

\begin{proposition} \label{2.51}
Whenever $\Ker({\mathcal D}) = 0$, the quasilocal energy can be
described as follows. 
Within the spinor module  associated to $\R^n$,
let $\{\epsilon_a\}_{a=1}^{2^{\frac{n}{2}-1}}$ be an orthonormal 
basis for $\Ker(\gamma^0 \gamma^n - I)$ and let 
$\{\epsilon^\prime_a\}_{a=1}^{2^{\frac{n}{2}-1}}$ be an orthonormal 
basis for $\Ker(\gamma^0 \gamma^n + I)$.
Extending $\epsilon_a$ and $\epsilon^\prime_a$ to constant-valued sections of
$S_M$, let
$\psi_a \in C^\infty(N; S_N)$ be a harmonic spinor on $N$ with boundary 
value in $(\partial f)^* \epsilon_a \cdot 1_C + \Ker(\gamma^0 \gamma^n + I)$ and let
$\psi^\prime_a \in C^\infty(N; S_N)$ be a harmonic spinor on $N$ with boundary 
value in $(\partial f)^* \epsilon^\prime_a \cdot 1_C + \Ker(\gamma^0 \gamma^n - I)$
Then
\begin{equation} \label{2.52}
{\mathcal E}_C = - 2^{- n/2} \sum_{a=1}^{2^{\frac{n}{2} - 1}} \int_{\partial N}
\left( \langle \psi_a, \nabla^N_{e_n} \psi_a \rangle  +
\langle \psi^\prime_a, \nabla^N_{e_n} \psi^\prime_a \rangle
\right) \dvol_{\partial N}.
\end{equation}
In particular, this is true under the hypotheses of Proposition \ref{2.15}, 
\end{proposition}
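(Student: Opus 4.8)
The plan is to exploit the fact that, for a domain $M\subset\R^n$, the spinor bundle $S_M$ is flatly trivial, which lets one diagonalize the whole construction. First I would choose a parallel orthonormal framing of $\R^n$; its restriction to $M$ has vanishing connection one-forms, so the pullbacks $\widehat\omega^{\widehat\alpha}_{\:\:\widehat\beta\gamma}$ in (\ref{2.2}) vanish and the covariant derivative on $E=S_N\otimes f^*S^*_M$ becomes $\nabla^{S_N}\otimes\Id$, with $D^N$ acting as the untwisted Dirac operator of $N$, diagonally with respect to the induced constant trivialization $f^*S^*_M\cong N\times S^*$. Fixing the orthonormal basis $\{\epsilon_a\}\cup\{\epsilon'_a\}$ of $S$ adapted to the $(\pm1)$-eigenspaces of $\gamma^0\gamma^n$, with dual basis $\{\epsilon^a\}\cup\{(\epsilon')^a\}$ of $S^*$, and noting that $\Ker({\mathcal D})=0$ forces $\overline 1_C=1_C$ so that ${\mathcal C}_C$ has a unique element $\psi$, I would write $\psi=\sum_a\psi_a\otimes\epsilon^a+\sum_a\psi'_a\otimes(\epsilon')^a$ with $\psi_a=\psi(\epsilon_a)$ and $\psi'_a=\psi(\epsilon'_a)$; since the $\epsilon_a,\epsilon'_a$ are parallel, $D^N\psi=0$ forces each $\psi_a,\psi'_a$ to lie in the kernel of the untwisted Dirac operator on $S_N$.

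Next I would translate the boundary condition $\pi_+\left(\psi\Big|_{\partial N}\right)=1_C$. Under the identification $E\Big|_{\partial N}\cong\Lambda^*T^*N\Big|_{\partial N}\cong\End(S_N)\Big|_{\partial N}$ the form $1_C$ corresponds to $1_C\cdot\Id_{S_N}$, and by Lemma \ref{2.12} (and the description of $T$ preceding it) the involution $T$ acts on $\End(S_N)\Big|_{\partial N}$ by conjugation by $\gamma^0\gamma^n$, which squares to $\Id$; hence $\pi_+$ is the projection onto endomorphisms commuting with $\gamma^0\gamma^n$, i.e. onto those preserving the splitting $S_N\Big|_{\partial N}=S_N^+\oplus S_N^-$ into $(\pm1)$-eigenspaces of $\gamma^0\gamma^n$, while $\pi_-$ is the projection onto the off-diagonal part. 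Viewing $\psi\Big|_{\partial N}$ as the endomorphism sending $(\partial f)^*\epsilon_a\mapsto\psi_a\Big|_{\partial N}$ and $(\partial f)^*\epsilon'_a\mapsto\psi'_a\Big|_{\partial N}$ — after the boundary identification $f^*S_M\Big|_{\partial N}\cong S_N\Big|_{\partial N}$, which, since $\partial f$ is an isometric diffeomorphism carrying inward normal to inward normal, intertwines $\widehat\gamma^{\widehat 0}\widehat\gamma^{\widehat n}$ with $\gamma^0\gamma^n$ and so carries $(\partial f)^*\epsilon_a$ into $S_N^+$ and $(\partial f)^*\epsilon'_a$ into $S_N^-$ — the equation $\pi_+\left(\psi\Big|_{\partial N}\right)=1_C\cdot\Id$ says precisely that the two diagonal blocks equal $1_C\cdot\Id$, that is, $\psi_a\Big|_{\partial N}\in(\partial f)^*\epsilon_a\cdot 1_C+\Ker(\gamma^0\gamma^n+I)$ and $\psi'_a\Big|_{\partial N}\in(\partial f)^*\epsilon'_a\cdot 1_C+\Ker(\gamma^0\gamma^n-I)$, which are the conditions in the statement. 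Uniqueness of each $\psi_a,\psi'_a$ under these conditions follows because the difference of two solutions reassembles into a harmonic section of $E$ whose boundary endomorphism is purely off-diagonal, hence into an element of $\Ker({\mathcal D})=0$.

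Finally I would evaluate $-\int_{\partial N}\langle\psi,\nabla^N_{e_n}\psi\rangle\,\dvol_{\partial N}$. Since the trivialization of $f^*S^*_M$ is parallel, $\nabla^N_{e_n}\psi=\sum_a(\nabla^N_{e_n}\psi_a)\otimes\epsilon^a+\sum_a(\nabla^N_{e_n}\psi'_a)\otimes(\epsilon')^a$. The only real point is the normalization: the inner product on $E$, restricted to $\partial N$ and carried to $\End(S_N)\Big|_{\partial N}$ through $\Lambda^*T^*N\Big|_{\partial N}\cong\End(S_N)\Big|_{\partial N}$, is the standard inner product on forms, which under the Clifford symbol map equals $2^{-n/2}$ times the Hilbert--Schmidt inner product $\Tr_{S_N}(A^*B)$ (because $\langle\gamma^I,\gamma^I\rangle_{\mathrm{HS}}=\Tr\Id_{S_N}=2^{n/2}$ while $|\tau^I|^2=1$ for an increasing multi-index $I$). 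Computing $\Tr_{S_N}$ of $\psi^*\nabla^N_{e_n}\psi$ in the orthonormal basis $\{\epsilon_a\}\cup\{\epsilon'_a\}$ then gives $\langle\psi,\nabla^N_{e_n}\psi\rangle=2^{-n/2}\sum_a\big(\langle\psi_a,\nabla^N_{e_n}\psi_a\rangle+\langle\psi'_a,\nabla^N_{e_n}\psi'_a\rangle\big)$, and integrating over $\partial N$ yields (\ref{2.52}). The last assertion follows since Proposition \ref{2.15} guarantees $\Ker({\mathcal D})=0$ under its hypotheses. I expect the main obstacle to be the middle paragraph: keeping the chain of identifications $E\cong\Lambda^*T^*N\cong\End(S_N)$, the boundary identification of $f^*S_M$ with $S_N$, and the block description of $\pi_\pm$ mutually consistent — together with pinning down the factor $2^{-n/2}$ arising from the mismatch between the form inner product and the trace inner product on $\End(S_N)$.
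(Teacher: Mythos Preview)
Your proposal is correct and follows essentially the same approach as the paper's proof: both exploit the flat triviality of $f^*S^*_M$ to decompose the problem along an orthonormal basis of $S$ adapted to the $\pm 1$-eigenspaces of $\gamma^0\gamma^n$, identify the boundary condition via the block structure of $T$ (the paper checks $T(\eta_a\otimes\epsilon_a^*)=-\eta_a\otimes\epsilon_a^*$ directly, you phrase it as conjugation by $\gamma^0\gamma^n$), and extract the factor $2^{-n/2}$ from the mismatch between the form inner product and the trace/Hilbert--Schmidt inner product on $\End(S_N)$. The only cosmetic difference is that the paper assembles $\Psi=2^{-n/2}\sum_a(\psi_a\otimes f^*\epsilon_a^*+\psi'_a\otimes f^*\epsilon_a^{\prime,*})$ and invokes uniqueness, whereas you start from the unique $\psi$ and decompose it; these are the same argument read in opposite directions.
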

\begin{proof}
Let $\epsilon_a^* \in S_M^*$ denote inner product with $\epsilon_a$, and
similarly for $\epsilon_a^{\prime, *}$.
Under the isomorphism $S_M \otimes S_M^* \cong \Lambda^*T^*M$, we claim that
$\sum_a \left(
\epsilon_a \otimes \epsilon_a^* + \epsilon^\prime_a \otimes 
\epsilon_a^{\prime,*} \right)$ corresponds to
$2^{n/2} \in \Lambda^*T^*M$. This is because if $I$ is a nontrivial increasing
multi-index with entries from $\{1, \ldots, n\}$ then
$\Tr \left( \gamma^I \frac{\Id - \gamma^0 \gamma^n}{2} \right) +
\Tr \left( \gamma^I \frac{\Id + \gamma^0 \gamma^n}{2} \right) = 0$, 
while $\Tr \left( \frac{\Id - \gamma^0 \gamma^n}{2} \right) +
\Tr \left( \frac{\Id + \gamma^0 \gamma^n}{2} \right) = 2^{n/2}$.

On the other hand, if $\eta_a \in \Ker(\gamma^0 \gamma^n + I)$ then
$T(\eta_a \otimes \epsilon_a^*) = - \eta_a \otimes \epsilon_a^*$, and
similarly for an element of the form 
$\eta_a^\prime \otimes \epsilon_a^{\prime,*}$ with
$\eta_a^\prime \in \Ker(\gamma^0 \gamma^n + I)$.
Thus 
\begin{equation} \label{2.53}
\Psi = 2^{-n/2} 
\sum_a \left( \psi_a \otimes f^* \epsilon_a^*
+ \psi^\prime_a \otimes f^* \epsilon_a^{\prime,*} \right)
\end{equation}
satisfies
$D^N \Psi = 0$ and $\pi_+ \left( \Psi \Big|_{\partial N} \right) = 1_C$. 
As $\Ker({\mathcal D}) = 0$, it is the unique such solution. Hence
\begin{align} \label{2.54}
{\mathcal E}_C = & - \int_{\partial N} \langle \Psi, \nabla^N_{e_n} \Psi \rangle \:
\dvol_{\partial N} \\
 = & - 2^{- n/2} \sum_{a=1}^{2^{\frac{n}{2}-1}} \left( \int_{\partial N}
\langle \psi_a, \nabla^N_{e_n} \psi_a \rangle \dvol_{\partial N} + 
\langle \psi^\prime_a, \nabla^N_{e_n} \psi^\prime_a \rangle \dvol_{\partial N} \right).
\notag
\end{align}
This proves the proposition.
\end{proof}

We now consider the case when $N$ is diffeomorphic to a disk, with a rotationally symmetric metric $g_N$. Then $(N, g_N)$ is conformally 
equivalent to a disk in $\R^n$ and by rescaling the disk, 
we can assume that the conformal
equivalence $f$ is an isometry on the boundary.  
   \begin{proposition} \label{rot}
In the rotationally symmetric case, if $H_{\partial N} > 0$ and
the sectional curvature
of $\partial N$ is $\frac{1}{k^2}$ then
\begin{equation}
    {\mathcal E}_{\partial N} = 
    \frac12 \int_{\partial N} \left( \frac{n-1}{k} - H_{\partial N} 
     \right) \: \dvol_{\partial N}.
    \end{equation}
    \end{proposition}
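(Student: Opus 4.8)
The plan is to read the rotationally symmetric case off the conformal-deformation analysis of Section~\ref{subsect2.4}, culminating in equation (\ref{2.45}). With the comparison data fixed as in the discussion preceding the statement, let $M\subset\R^n$ be the round ball and $f:N\to M$ the conformal diffeomorphism that is an isometry on $\partial N$, and write $g_N=e^{2\phi}f^*g_{Eucl}$ where $\phi$ vanishes on $\partial N$ (and, $\partial N$ being connected, ``locally constant'' there is the same as ``zero''). Since $\partial N$ has constant sectional curvature $\tfrac1{k^2}$ it is a round $(n-1)$-sphere of radius $k$, and as $\partial f$ is an isometry onto $\partial M$ the ball $M$ has radius $k$; by the normalization ``$\partial B^n$ has $H=n-1$'' and scaling, $H_{\partial M}=\tfrac{n-1}{k}$.

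Next I would observe that this is exactly the situation of Section~\ref{subsect2.4} with $N$ in the role of $N'$, the flat ball $M$ in the role of the undeformed background $N$, and the conformal diffeomorphism $f$ in the role of $\Id$: the conformal covariance $D^{N'}=e^{-\frac{n+1}{2}\phi}D^{N}e^{\frac{n-1}{2}\phi}$ and the transformation law (\ref{2.40}) for $\nabla^N_{e_n}$ are valid here because only $S_N$, not $S_M$, is rescaled. Thus the argument of Lemma~\ref{2.42} applies: pulling a minimizer back to the flat side, where $f$ is an isometry and the vanishing argument of Proposition~\ref{2.23} is available, (\ref{2.43}) reduces the energy to $\mathcal{E}_{\partial N}=\inf_{\psi}\ \tfrac{n-1}{2}\int_{\partial N}(e_n\phi)\,|\psi|^2\,\dvol_{\partial N}$, where $\psi$ runs over the affine space $1+\Ker(\mathcal D)$ on the flat side. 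When $e_n\phi\ge0$ the infimum is attained at $\psi=1$, giving $\mathcal{E}_{\partial N}=\tfrac{n-1}{2}\int_{\partial N}(e_n\phi)\,\dvol_{\partial N}$; feeding in the mean-curvature relation (\ref{2.44}), $H_{\partial N}=H_{\partial M}-(n-1)e_n\phi$, converts this into $\tfrac12\int_{\partial N}\bigl(H_{\partial M}-H_{\partial N}\bigr)\dvol_{\partial N}=\tfrac12\int_{\partial N}\bigl(\tfrac{n-1}{k}-H_{\partial N}\bigr)\dvol_{\partial N}$, which is the asserted identity.

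The step I expect to be the real obstacle is checking that $e_n\phi\ge0$, equivalently that the infimum above is finite rather than $-\infty$ (by Lemma~\ref{2.42} it drops to $-\infty$ the moment $e_n\phi<0$). Writing $g_N=dr^2+\rho(r)^2 g_{S^{n-1}}$ on $[0,L]$ with $\rho(0)=0$ and $\rho'(0)=1$, one has $k=\rho(L)$ and $H_{\partial N}=(n-1)\rho'(L)/k$, and a short computation comparing with the radial conformal change to $\R^n$ gives $e_n\phi=(1-\rho'(L))/k$; hence $e_n\phi\ge0$ is precisely $\rho'(L)\le1$, i.e. $H_{\partial N}\le\tfrac{n-1}{k}$. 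This is the point at which the hypothesis on $\partial N$ must be brought to bear: it holds, for instance, whenever $N$ has nonnegative radial sectional curvature, since then $\rho''\le0$ forces $\rho'(L)\le\rho'(0)=1$. Everything else is bookkeeping on top of Section~\ref{subsect2.4}; securing this one inequality in the generality intended is where the actual content lies.
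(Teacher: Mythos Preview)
Your approach matches the paper's exactly: the proof there is literally ``This follows from (\ref{2.45}),'' and your write-up spells out precisely the intended reduction to that formula via Lemma~\ref{2.42} together with the identification $H_{\partial M}=(n-1)/k$ for the flat ball of radius $k$.

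The concern you raise about verifying $e_n\phi\ge0$ is legitimate, and the paper's one-line proof does not address it either. Your computation is correct: in the rotationally symmetric setting $e_n\phi$ is constant on $\partial N$ and equals $(1-\rho'(L))/k$, so $e_n\phi\ge0$ is equivalent to $H_{\partial N}\le(n-1)/k$. The stated hypothesis $H_{\partial N}>0$ alone does not force this; for instance, the hyperbolic disk $g_N=dr^2+\sinh^2(r)\,d\Omega^2$ on $[0,L]$ has $H_{\partial N}=(n-1)\coth L>0$ but $\rho'(L)=\cosh L>1$, and then Lemma~\ref{2.42} gives $\mathcal E_{\partial N}=-\infty$ rather than a finite Brown--York value. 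So the gap you flag is real and sits equally in the paper's argument. One clean way to close it within the paper's framework is to add the natural hypothesis $R_N\ge0$ (already invoked at the start of Section~\ref{subsect2.5}): Proposition~\ref{2.27} then guarantees $\mathcal E_{\partial N}\ge0$, hence finite; since $e_n\phi$ is constant on the sphere, the dichotomy of Lemma~\ref{2.42} forces $e_n\phi\ge0$, and (\ref{2.45}) applies to yield the stated formula.
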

    \begin{proof}
        This follows from (\ref{2.45})
    \end{proof}

\subsection{Odd dimensional spaces} \label{subsect2.6}

We now assume that $n$ is odd. 
The Clifford algebra has a faithful
representation $S$ of complex dimension $2^{\frac{n+1}{2}}$, which breaks
up into two isomorphic spinor representations $S^+ \oplus S^-$ of
$\Spin(n)$.
Explicitly, we can write Clifford generators
$\{\gamma^\alpha\}_{\alpha = 1}^n$
on $S$ as $\gamma^\alpha = 
\begin{pmatrix}
0 & \sigma^\alpha \\
\sigma^\alpha & 0
\end{pmatrix}$, where
$\{\sigma^\alpha\}_{\alpha = 1}^n$ satisfy the Clifford relations on
$\C^{2^{\frac{n-1}{2}}}$. We again let $\epsilon$ be the $\Z_2$-grading
operator on $S$.

With this definition of the spinors $S$,
put $E = S_N \otimes f^* S_M^*$.
We can identify $E \Big|_{\partial N} \cong
\End \left( S_N \Big|_{\partial N} \right)$ with
$\Lambda^*(T^* N) \Big|_{\partial N} \oplus
\Lambda^*(T^* N) \Big|_{\partial N}$. To realize this identification explicitly,
put
$\gamma^0 =
\begin{pmatrix}
  0 & -1 \\
  1 & 0
\end{pmatrix}$.
Let $I$ be an increasing multi-index with
entries between $1$ and $n$.
Let $\tau^0$ denote a new odd variable.  Then an element
$\omega_I \tau^I + \tau^0 \wedge \omega^\prime_I \tau^I$ acts on  
$S_N \Big|_{\partial N}$ by sending $\psi$ to
$(\omega_I \gamma^I + \gamma^0 \omega^\prime_I \gamma^I) \psi$.

Define $T$ as in (\ref{2.11}).
For $\omega_1, \omega_2, \omega^\prime_1, \omega^\prime_2 \in
\Lambda^* T^* \partial N$, we have
\begin{align} \label{2.55}
& \left( (\omega_1 + \tau^n \wedge\omega_2) +   
\tau^0 \wedge (\omega^\prime_1 + \tau^n \wedge \omega^\prime_2) \right)
\gamma^0 \gamma^n \psi = \\
& \gamma^0 \gamma^n \left( (\omega_1 - \tau^n \wedge\omega_2) +  
\tau^0 \wedge (- \omega^\prime_1 + \tau^n \wedge \omega^\prime_2) \right) \psi. \notag
\end{align}
Hence the induced action of $T$ on 
$E \Big|_{\partial N} \cong \End(S_N) \Big|_{\partial N} $
is
\begin{equation} \label{2.56}
  T \left( \omega_1 + \tau^n \wedge\omega_2, 
\omega^\prime_1 + \tau^n \wedge \omega^\prime_2 \right) = 
(\omega_1 - \tau^n \wedge\omega_2,
- \omega^\prime_1 + \tau^n \wedge \omega^\prime_2).
  \end{equation}

We consider solutions to the Dirac equation $D^N \psi = 0$ on sections
$\psi \in C^\infty(N; 
E)$ with the boundary condition that if
$\psi \Big|_{\partial N} = \left( \omega_1 + \tau^n \wedge\omega_2, 
\omega^\prime_1 + \tau^n \wedge \omega^\prime_2 \right)
$ then
$\omega_1 = \overline{1}_C$ and $\omega_2 = \omega^\prime_1 = \omega^\prime_2 = 0$, 
where the overline denotes an orthogonal 
projection as before. Then the results of the previous
sections have straightforward extensions.

\section{Lorentzian case} \label{sect3}

This section is about the extension of Section \ref{sect2} to
hypersurfaces in
Lorentzian manifolds.
Section \ref{subsect3.0} has background material. Section \ref{subsect3.1} discusses
the case when the background space $M$ is a totally geodesic hypersurface in a
Lorentzian manifold $\overline{M}$. Section \ref{subsect3.2} deals with the case when
$M$ is a compact spatial hypersurface-with-boundary in $\R^{n,1}$.  Finally, in
Section \ref{subsect3.3} we describe how the results of Section \ref{subsect3.2} 
extend to when $M$ is a compact spatial hypersurface-with-boundary in a
product spacetime $\R \times X$.

\subsection{Background information} \label{subsect3.0}

Let $\overline{N}$ and $\overline{M}$ be
$(n+1)$-dimensional Lorentzian manifolds with
signature $(-1, 1, \ldots, 1)$. Let $N$ and $M$ be compact connected
$n$-dimensional spacelike
submanifolds of $\overline{N}$ and $\overline{M}$,
respectively, with nonempty boundary. 
Let $f : N \rightarrow M$ be a smooth spin map so that for each
connected component $Z$ of $\partial N$, the map $\partial f$ restricts to
an isometric diffeomorphism from $Z$ to $(\partial f)(Z)$.
By shrinking
$\overline{N}$ and $\overline{M}$ to suitable neighborhoods of $N$ and $M$,
respectively, we can assume that $f$ is the restriction of a spin map
$\overline{f} : \overline{N} \rightarrow \overline{M}$.  For simplicity,
we will assume that $\overline{N}$ and $\overline{M}$ are spin and that
$\partial f$ is a spin diffeomorphism on components of $\partial N$.
We let $S_{\overline{N}}$ be the standard spinor bundle on $\overline{N}$,
and similarly for $S_{\overline{M}}$.

We can identify
$S_{\overline{N}} \Big|_N$
with $S_N$, and similarly for $S_{\overline{M}}$.

\begin{remark} \label{3.1}
We could phrase what follows just in terms of $N$, $M$ and their normal bundles, but it
seems more illuminating to include the ambient spaces $\overline{N}$ and $\overline{M}$.
\end{remark}

\begin{remark} \label{3.2}
To clarify the relation between the spinor bundle $S_N$ considered here and
that considered in Section \ref{sect2}, we mention some facts about spinors.
Suppose first that $n$ is odd.
Then
$\dim(S_{\R^{n,1}}) = 2^{\frac{n+1}{2}}$
and $\dim(S_{\R^{n,1}} \otimes S_{\R^{n,1}}^*) = 2^{n+1}$, which
equals $\dim(\Lambda^* \R^{n,1})$. 
We can identify
$S_{\R^{n,1}} \otimes S_{\R^{n,1}}^*$ with 
$\Lambda^* \R^{n,1}$.  Using a timelike unit vector $e_0$, 
the latter can be identified with
$\Lambda^* \R^{n}  \widehat{\otimes} \Lambda^* \R^{1}$. Compare with Section \ref{2.6}.

Now suppose that $n$ is even.  
Then $\dim(S_{\R^{n,1}}) = 2^{\frac{n}{2}}$
and $\dim(S_{\R^{n,1}} \otimes S_{\R^{n,1}}^*) = 2^n$. 
On the other hand,
$\dim(\Lambda^* \R^{n,1}) = 2^{n+1}$.  It turns out that one can identify
$S_{\R^{n,1}} \otimes S_{\R^{n,1}}^*$, as a
$\Spin(n,1)$-module, with $\Lambda^*(\R^{n,1})/
(\omega \sim * \omega)$.  Using a timelike unit vector $e_0$, 
the latter can be identified
with $\Lambda^* \R^{n}$. Compare with Section \ref{subsect2.1}.
\end{remark}

The restriction to $N$ of the connection on $S_{\overline{N}}$ has the
local form
\begin{equation} \label{3.3}
  \nabla^W_\sigma =
  e_\sigma + \frac{1}{8} \omega_{\alpha \beta \sigma} [\gamma^\alpha,
    \gamma^\beta] + \frac12 \omega_{0 \alpha \sigma} \gamma^0 \gamma^\alpha,
\end{equation}
where $\alpha$ and $\beta$ are summed from $1$ to $n$. Note that $\nabla^W$
is generally not a unitary connection, because of the last term on the
right-hand side of (\ref{3.3}).
However, the ensuing Dirac-type operator
\begin{equation} \label{3.4}
  D^W = - \sqrt{-1} \sum_{\sigma = 1}^n \gamma^\sigma \nabla^W_{\sigma}
\end{equation}
is formally self-adjoint.  To verify this, one can use the fact that in
normal coordinates around a point of $N$, one has
\begin{equation} \label{3.5}
  [\nabla^W_\sigma, \gamma^\alpha] = \omega_{0 \alpha \sigma} \gamma^0.
  \end{equation}

The restriction of $S_{\overline{M}}$ to $M$ is isomorphic to $S_M$. There
are corresponding connections on $S_M^*$.  As in Section \ref{sect2},
it will be convenient to use the isomorphism of $\Spin(n,1)$-modules
$S_M^* \cong S_M$ to transfer
the connection from $S_M^*$ to $S_M$. There is a subtlety in that the
isomorphism is not unitary.  If $\sigma \in \Spin(n,1)$ is a
transition function for $S_M$ then the corresponding transition function
for $S_M^*$ is $\sigma^{-T}$. This is related to $\sigma$ by $\sigma^{-T} = C \sigma C^{-1}$,
where $C$ is the charge conjugation matrix. In short, 
when written on $S_M$, one finds that 
the connection on $S_M^*$ takes the local form
\begin{equation} \label{3.7}
  \widehat{\nabla}^W_{\widehat{\sigma}} =
  e_{\widehat{\sigma}} + \frac{1}{8} 
\widehat{\omega}_{{\widehat{\alpha}} {\widehat{\beta}} {\widehat{\sigma}}}
  [\widehat{\gamma}^{\widehat{\alpha}},
    \widehat{\gamma}^{\widehat{\beta}}] - \frac12
  \widehat{\omega}_{\widehat{0} \widehat{\alpha} {\widehat{\sigma}}}
  \widehat{\gamma}^{\widehat{0}} \widehat{\gamma}^{\widehat{\alpha}}.
\end{equation}
Note the change in sign in the last term as compared with
(\ref{3.3}).

Let $\nabla^N$ be the connection on
$E = S_N \otimes f^* S_M^*$. It takes the local form
\begin{equation} \label{3.9}
  \nabla^N_\sigma =
    e_\sigma + \frac{1}{8} \omega_{\alpha \beta \sigma} [\gamma^\alpha,
      \gamma^\beta] + \frac12 \omega_{0 \alpha \sigma} \gamma^0 \gamma^\alpha
    + \frac{1}{8}
    \widehat{\omega}_{\widehat{\alpha} \widehat{\beta} \sigma} [
      \widehat{\gamma}^{\widehat{\alpha}},
      \widehat{\gamma}^{\widehat{\beta}}] -
    \frac12 \widehat{\omega}_{\widehat{0} \widehat{\alpha} \sigma}
  \widehat{\gamma}^{\widehat{0}} \widehat{\gamma}^{\widehat{\alpha}}.
\end{equation}
The corresponding Dirac-type operator
\begin{equation} \label{3.10}
  D^N = - \sqrt{-1} \sum_{\sigma = 1}^n \gamma^\sigma \nabla^N_\sigma
\end{equation}
is formally self-adjoint except for the term
$\sqrt{-1} \gamma^\sigma \cdot \frac12
\widehat{\omega}_{\widehat{0} \widehat{\alpha} \sigma}
\widehat{\gamma}^{\widehat{0}} \widehat{\gamma}^{\widehat{\alpha}}$. 

\subsection{Time-symmetric background space} \label{subsect3.1}

Suppose that $M$ is a totally geodesic subspace of $\overline{M}$.
Then $\widehat{\omega}_{\widehat{0} \widehat{\alpha} \sigma} = 0$.
We are in a situation analogous to Section \ref{sect2}, except that $N$ is
now a hypersurface-with-boundary in a Lorentzian manifold. We give the
extensions of results from Section \ref{sect2}.

Let $C$ be a nonempty union of connected components of $\partial N$.
We define the quasilocal energy ${\mathcal E}_C$ as in Definition \ref{2.19}.

\begin{lemma} \label{3.11} If $f : N \rightarrow M$ is an isometric diffeomorphism, and
  $N$ is totally geodesic in $\overline{N}$, then ${\mathcal E}_C = 0$.
  \end{lemma}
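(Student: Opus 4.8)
The plan is to reduce Lemma \ref{3.11} to the Riemannian situation already handled in Proposition \ref{2.23}. The key observation is that under the two hypotheses, namely that $M$ is totally geodesic in $\overline{M}$ and $N$ is totally geodesic in $\overline{N}$, all the ``Lorentzian'' terms in the connection $\nabla^N$ of equation (\ref{3.9}) drop out. Indeed, $M$ totally geodesic gives $\widehat{\omega}_{\widehat{0}\widehat{\alpha}\sigma} = 0$ (this is exactly the standing assumption of Section \ref{subsect3.1}), and $N$ totally geodesic in $\overline{N}$ gives $\omega_{0\alpha\sigma} = 0$ along $N$. So on $N$ the connection (\ref{3.9}) collapses to
\begin{equation}
\nabla^N_\sigma = e_\sigma + \frac{1}{8} \omega_{\alpha\beta\sigma} [\gamma^\alpha,\gamma^\beta] + \frac{1}{8} \widehat{\omega}_{\widehat{\alpha}\widehat{\beta}\sigma} [\widehat{\gamma}^{\widehat{\alpha}}, \widehat{\gamma}^{\widehat{\beta}}],
\end{equation}
which is precisely the Riemannian covariant derivative (\ref{2.2}), and correspondingly $D^N = -\sqrt{-1}\sum_\sigma \gamma^\sigma \nabla^N_\sigma$ agrees with the Riemannian Dirac-type operator of Section \ref{subsect2.1}. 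In particular $\nabla^N$ is now a unitary connection and $D^N$ is formally self-adjoint in the ordinary $L^2$ sense.

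Next I would note that, because $f$ is an isometric diffeomorphism, the Clifford module $E = S_N \otimes f^* S_M^*$ is identified with $\Lambda^* T^*N$ exactly as in the proof of Proposition \ref{2.23}, and under this identification $D^N$ becomes $d + d^*$ on $N$ (the extrinsic data of $N \subset \overline{N}$ and $M \subset \overline{M}$ having been eliminated by the total-geodesy assumptions). The boundary involution $T$ and the projections $\pi_\pm$, defined via $\gamma^0 \gamma^n$ in (\ref{2.11}), are the same operators as in Section \ref{subsect2.2}; the only difference is cosmetic, that $\gamma^0$ now arises from the ambient timelike direction rather than from the $\Z_2$-grading, but on $S_N|_{\partial N}$ it acts the same way, so Lemma \ref{2.12} still holds verbatim. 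Hence the quasilocal energy ${\mathcal E}_C$ of Definition \ref{2.19}, for the data $(N,M,f)$ here, literally coincides with the Riemannian quasilocal energy ${\mathcal E}_C$ of Section \ref{sect2} for the Riemannian manifolds $(N,g_N)$ and $(M,g_M)$ with the same isometric diffeomorphism $f$.

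With that reduction in hand, the conclusion follows: Proposition \ref{2.23} shows ${\mathcal E}_{\partial N} = 0$. But we want the statement for an arbitrary nonempty union $C$ of boundary components, not just $C = \partial N$. Here I would run the argument of Proposition \ref{2.23} with $\sigma = \overline{1}_C = 1_C$ in place of $1$: since $D^N = d + d^*$ and $1_C$ is locally constant on $\partial N$, the form $\psi = 1_C$ (extended as the locally constant function equal to $1$ on the components of $N$ meeting $C$ and $0$ elsewhere — note $N$ need not be connected here, or if it is, $1_C$ extends by $1$) is a genuine solution of $D^N \psi = 0$ with $\pi_+(\psi|_{\partial N}) = 1_C$, so $1_C \perp \gamma^n \Ker({\mathcal D})|_{\partial N}$ and $\overline{1}_C = 1_C$. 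Elements of ${\mathcal C}_C$ then have the form $1_C + \rho$ with $\rho$ a harmonic form satisfying relative (Dirichlet) boundary conditions, so $d\psi = d^*\psi = 0$, and formula (\ref{2.25}) — valid here because $D^N = d+d^*$ — gives $-\int_{\partial N}\langle \psi, \nabla^N_{e_n}\psi\rangle\,\dvol_{\partial N} = 0$ for every such $\psi$; taking the infimum yields ${\mathcal E}_C = 0$.

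The main obstacle is the bookkeeping in the second step: one must check carefully that the total-geodesy of both $N$ and $M$ really does kill \emph{every} Lorentzian discrepancy, in particular that the identification $E|_{\partial N} \cong \Lambda^* T^*N|_{\partial N}$ and the operator $T$ behave exactly as in Section \ref{sect2}, so that Lemma \ref{2.12} and the manipulations (\ref{2.24})–(\ref{2.25}) transfer without change. Once that identification is pinned down, the rest is a direct appeal to the proof of Proposition \ref{2.23}.
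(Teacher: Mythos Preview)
Your reduction is exactly what the paper does: the two total-geodesy hypotheses kill both $\omega_{0\alpha\sigma}$ and $\widehat{\omega}_{\widehat{0}\widehat{\alpha}\sigma}$, so the connection (\ref{3.9}) collapses to the Riemannian one (\ref{2.2}), $D^N$ becomes $d+d^*$ on $\Lambda^*T^*N$, the boundary data $T$, $\pi_\pm$ match those of Section~\ref{subsect2.2}, and Proposition~\ref{2.23} applies verbatim. The paper's proof is the single sentence ``The proof is the same as that of Proposition~\ref{2.23}.''

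Where you overreach is in the last paragraph, attempting to upgrade from $C=\partial N$ to an arbitrary union $C$ of boundary components. Your proposed solution $\psi = 1_C$ ``extended as the locally constant function equal to $1$ on the components of $N$ meeting $C$ and $0$ elsewhere'' does not exist: $N$ is assumed \emph{connected} throughout the paper, so the only locally constant functions on $N$ are the global constants, and a global constant restricts to $c\cdot 1_{\partial N}$ on the boundary, never to $1_C$ when $C \subsetneq \partial N$. Your parenthetical fallback ``or if it is, $1_C$ extends by $1$'' produces boundary value $1_{\partial N}$, not $1_C$. So this step fails, and with it your identification $\overline{1}_C = 1_C$ and the claim that every $\psi \in {\mathcal C}_C$ satisfies $d\psi = d^*\psi = 0$.

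Note that the paper's own one-line proof, routing through Proposition~\ref{2.23}, likewise only yields ${\mathcal E}_{\partial N} = 0$ (consistent with Theorem~\ref{1.4}(1)); the subscript $C$ in the statement of Lemma~\ref{3.11} appears to be a slip for $\partial N$. Your argument is fine for $C = \partial N$ and matches the paper; just drop the attempted extension.
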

\begin{proof}
  The proof is the same as that of Proposition \ref{2.23}.
  \end{proof}

Define $T_{AB} = {R}^{\overline{N}}_{AB} - \frac12
  {R}_{\overline{N}}
g^{\overline{N}}_{AB}$, for $0 \le A,B \le n$.

\begin{proposition} \label{3.12} Suppose that $M$ has nonnegative curvature operator.
  If
\begin{equation} \label{3.13}
  2 \left( T_{00} - \sqrt{- \sum_{\alpha = 1}^n T_{0\alpha} T^{0 \alpha}}
  \right) \ge
  | \Lambda^2 df | (f^* R_M)
\end{equation}
then ${\mathcal E}_C \ge 0$.
\end{proposition}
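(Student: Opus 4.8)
The plan is to follow the template of Proposition~\ref{2.26} from the Riemannian case, with the curvature term $\frac14 R_N |\psi|^2$ replaced by a suitable stress–energy expression coming from the Gauss equation on the hypersurface $N \subset \overline{N}$. First I would write down the Bochner-type identity analogous to (\ref{2.5}): since $M$ is totally geodesic in $\overline{M}$ we have $\widehat{\omega}_{\widehat{0}\widehat{\alpha}\sigma} = 0$, so the connection (\ref{3.9}) on $E$ differs from a unitary connection only through the term $\frac12 \omega_{0\alpha\sigma}\gamma^0\gamma^\alpha$ involving the second fundamental form of $N$ in $\overline{N}$. Squaring $D^N$ and integrating by parts over $N$ produces, in addition to $|\nabla^N\psi|^2$ and the boundary term $\int_{\partial N}\langle \psi, \nabla^N_{e_n}\psi\rangle$, a bulk curvature term which — by the standard Witten/Gibbons–Hawking–Horowitz–Perry computation for spacelike hypersurfaces — is governed by the Einstein tensor: the analogue of $\frac{R_N}{4}$ becomes $\frac12(T_{00} + T_{0\alpha}\gamma^0\gamma^\alpha)$ acting on $\psi$ (up to the conventions fixed in Section~\ref{subsect2.1}), together with the $\widehat{R}$-term from (\ref{2.4}) that is untouched because $M$ is unchanged.

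Next I would estimate the operator $T_{00} + T_{0\alpha}\gamma^0\gamma^\alpha$ from below pointwise. Since $(\gamma^0)^2 = -1$ and the $\gamma^\alpha$ are the unitary Clifford generators, the operator $\sum_\alpha T_{0\alpha}\gamma^0\gamma^\alpha$ is self-adjoint with square $\bigl(\sum_\alpha T_{0\alpha}\gamma^0\gamma^\alpha\bigr)^2 = -\sum_\alpha T_{0\alpha}T^{0\alpha}\,\Id$ on the relevant module (the cross terms cancel by anticommutativity of distinct $\gamma^\alpha$), so its eigenvalues are $\pm\sqrt{-\sum_\alpha T_{0\alpha}T^{0\alpha}}$. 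Hence
\begin{equation}
T_{00} + \sum_{\alpha=1}^n T_{0\alpha}\gamma^0\gamma^\alpha \;\ge\; \Bigl( T_{00} - \sqrt{-\textstyle\sum_{\alpha=1}^n T_{0\alpha}T^{0\alpha}} \Bigr)\,\Id
\end{equation}
as a quadratic form. Combining this with the Goette–Semmelmann estimate (\ref{2.9}) for the $\widehat{R}$-term and with hypothesis (\ref{3.13}), the entire bulk integrand is $\ge 0$: the factor of $2$ in (\ref{3.13}) is exactly what matches the $\frac12$ versus $\frac14$ normalization between the two curvature contributions. This forces
\begin{equation}
-\int_{\partial N}\langle \psi, \nabla^N_{e_n}\psi\rangle\,\dvol_{\partial N} \;\ge\; \int_N |\nabla^N\psi|^2\,\dvol_N \;\ge\; 0
\end{equation}
for every $\psi$ with $D^N\psi = 0$, and taking the infimum over $\psi \in {\mathcal C}_C$ gives ${\mathcal E}_C \ge 0$.

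Finally I would need to justify the boundary manipulation: because $M$ is totally geodesic the involution $T$ and the operator $D^{\partial N}$ on $\partial N$ are exactly as in Section~\ref{sect2} (the ambient Lorentzian structure of $\overline{N}$ does not enter the boundary algebra, only the normal derivative $\nabla^N_{e_n}$), so the passage from the squared-Dirac identity to the boundary energy is verbatim as in (\ref{2.5})--(\ref{2.6}), and the minimization argument of Lemma~\ref{2.22} applies unchanged. The main obstacle is bookkeeping: getting the Gauss-equation identity in exactly the right form so that the Einstein-tensor combination $T_{00} - \sqrt{-\sum T_{0\alpha}T^{0\alpha}}$ appears with the correct coefficient relative to $|\Lambda^2 df|(f^*R_M)$ — i.e. tracking all the signs in (\ref{3.3}), (\ref{3.5}), (\ref{3.9}) and the sign conventions for $T_{AB}$ and curvature so that the factor-of-$2$ in (\ref{3.13}) is the sharp one. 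Once that identity is in hand, the spinorial algebra above is routine and the positivity is immediate.
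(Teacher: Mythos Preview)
Your proposal is correct and follows essentially the same route as the paper: the paper invokes Witten's calculation to obtain the Lichnerowicz-type identity (\ref{3.14}) with $\frac12(T_{00}+T_{0\alpha}\gamma^0\gamma^\alpha)$ in place of $\frac{R_N}{4}$, integrates by parts to (\ref{3.15}), and then cites (\ref{2.9}). You supply the eigenvalue estimate for $\sum_\alpha T_{0\alpha}\gamma^0\gamma^\alpha$ that the paper leaves implicit, and your final paragraph on boundary manipulations is unnecessary here (only the integration-by-parts boundary term is needed, not the structure of $T$ or $D^{\partial N}$), but neither addition is wrong.
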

\begin{proof}
  Using the calculations in \cite{Witten (1981)}, the analog of (\ref{2.4}) is
  \begin{equation} \label{3.14} 
    (D^N)^2 = (\nabla^N)^* \nabla^N + \frac12
    \left( T_{00} + T_{0\alpha} \gamma^0 \gamma^\alpha \right)
    - \frac14 
  [\gamma^\sigma, \gamma^\tau] \left(
  \frac18 \widehat{R}_{\widehat{\alpha} \widehat{\beta} \sigma \tau}
          [\widehat{\gamma}^{\widehat{\alpha}},
  \widehat{\gamma}^{\widehat{\beta}}] \right).
\end{equation}
If $D^N \psi = 0$, we obtain
\begin{align} \label{3.15}
& - \int_{\partial N} \langle \psi, \nabla^E_{e_n} \psi \rangle \: \dvol = 
\int_N | \nabla^N \psi |^2 \: \dvol + \\
& \int_N \langle \psi,
\left( \frac12
    \left( T_{00} + T_{0\alpha} \gamma^0 \gamma^\alpha \right)
    - \frac14 
  [\gamma^\sigma, \gamma^\tau] \left(
  \frac18 \widehat{R}_{\widehat{\alpha} \widehat{\beta} \sigma \tau}
          [\widehat{\gamma}^{\widehat{\alpha}},
  \widehat{\gamma}^{\widehat{\beta}}] \right) \right) 
\psi \rangle \: \dvol. \notag 
\end{align}
Using (\ref{2.9}), the proposition follows.
  \end{proof}

We now give an analog of (\ref{2.30}).
If $D^N \psi = 0$ then the analog of (\ref{2.6}) is
\begin{equation} \label{3.16}
  \nabla^N_{e_n} \psi = D^{\partial N}_{Riem} \psi + 
  \frac{H_{\partial N}}{2} - \frac12 \gamma^n \gamma^i
  \widehat{\gamma}^{\widehat{n}}
  \widehat{\gamma}^{\widehat{j}}  \widehat{A}_{\widehat{j}i} \psi
  - \frac12 \omega_{0ii} \gamma^0 \gamma^n \psi + \frac12
  \omega_{0ni} \gamma^0 \gamma^i \psi,
  \end{equation}
where $D^{\partial N}_{Riem}$ is the intrinsic Riemannian Dirac-type
operator on $\partial N$. 
Then
\begin{align} \label{3.17}
  - \int_{\partial N} \langle \psi, \nabla^E_{e_n} \psi \rangle \: \dvol =  
& - \int_{\partial N} \langle \psi,
  D^{\partial N}_{Riem}
  \psi \rangle \: \dvol_{\partial N} - \frac12
  \int_{\partial N} H_{\partial N} |\psi|^2 \: \dvol_{\partial N} + \\
  & \frac12
  \int_{\partial N} \langle \psi, \gamma^n \gamma^{i}
  \widehat{\gamma}^n \widehat{\gamma}^{\widehat{j}}
  \widehat{A}_{\widehat{j}i} \psi
  \rangle \: \dvol_{\partial N} + \notag \\
  & \frac12
  \int_{\partial N} \langle \psi, \gamma^0 \gamma^{n}
  \omega_{0ii} \psi
  \rangle \: \dvol_{\partial N} - \notag \\
  & \frac12
  \int_{\partial N} \langle \psi, \gamma^0 \gamma^{i}
  \omega_{0ni} \psi
  \rangle \: \dvol_{\partial N}. \notag
\end{align}
If $\psi \in {\mathcal C}_C$ then the last term in (\ref{3.17}) vanishes, as
$T$ anticommutes with
$\gamma^0 \gamma^i$, 

To analyze the next-to-last term in (\ref{3.17}), 
suppose that $n$ is even and $\overline{1}_C = 1_C$. 
Writing $\psi_{min} \Big|_{\partial N} = 1_C +
\tau^n \wedge \phi$ for $\phi \in \Omega^*(\partial N)$, we have
$\gamma^0 \gamma^n \psi_{min} \Big|_{\partial N} = - \gamma^n \gamma^0 (1_C + \tau^n \wedge \phi)$.
The $\Z_2$-grading operator $\epsilon$ of $S_N$, when acting on 
$S_N \otimes S_N^* \cong \Lambda^* T^*N$, is a $4^{th}$ root of $1$
(depending on the congruence class of $n$ modulo $4$) times
\begin{equation} \label{3.18}
\gamma^1 \gamma^2 \ldots \gamma^n =
(-1)^{\frac{n}{2}} (E^1 - I^1) (E^2 - I^2) \ldots (E^n - I^n).
\end{equation}
Applying both sides to $1_C$ gives $\gamma^0 1_C = c  1_C \tau^1 \wedge \ldots \tau^n$ for a
complex constant $c$ of unit norm, depending on $n$.
Then $\gamma^n \gamma^0 1_C = \sqrt{-1} (E^n - I^n) \cdot c 1_C
(\tau^1 \wedge \ldots \tau^n) = c^\prime 1_C \tau^1 \wedge \ldots \wedge
\tau^{n-1}$, for another unit constant $c^\prime$. In particular, the
following terms vanish:
\begin{equation} \label{3.19}
  \langle 1_C, \gamma^0 \gamma^n \omega_{0ii} 1_C \rangle
  =   \langle \tau^n \wedge \phi, \gamma^0 \gamma^n \omega_{0ii} 1_C \rangle
  =  \langle 1_C, \gamma^0 \gamma^n \omega_{0ii} \tau^n \wedge \phi \rangle
  = 0.
\end{equation}
As in (\ref{2.30}), we obtain
\begin{equation} \label{3.20}
  {\mathcal E}_C =  - \frac12
  \int_{C} H_{\partial N} \: \dvol_{\partial N} + \frac12
  \int_{C} (\partial f)^* H_{\partial M} \: \dvol_{\partial N}
+Q(\phi),
\end{equation}
where $Q(\phi)$ is an explicit homogeneous expression of order two in $\phi$. 
There is a similar discussion when $n$ is odd.

If $K$ denotes the second fundamental form of $N$ in $\overline{N}$ then the sum $\omega_{0ii}$ on $\partial N$, which will appear in the next proposition, equals $\Tr(K) - K(e_n, e_n)$.

\begin{proposition} \label{3.21}
Suppose that
\begin{itemize}
\item $M$ is a convex domain in $\R^n$,
\item $T_{00} - \sqrt{- \sum_{\alpha = 1}^n T_{0\alpha} T^{0 \alpha}} \ge 0$,
\item $H_{\partial N} - |\omega_{0ii}| \ge (\partial f)^* H_{\partial M}$, and
\item $T_{00} - \sqrt{- \sum_{\alpha = 1}^n T_{0\alpha} T^{0 \alpha}} > 0$ somewhere
or $H_{\partial N} - |\omega_{0ii}| > (\partial f)^* H_{\partial M}$ somewhere.
\end{itemize}
Then $\Ker({\mathcal D}) = 0$.
\end{proposition}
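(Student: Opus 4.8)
The plan is to adapt the proof of Proposition~\ref{2.15} to the present time-symmetric Lorentzian setting, replacing the Lichnerowicz identity (\ref{2.4})--(\ref{2.5}) by the Bochner-type identity (\ref{3.14})--(\ref{3.15}) and keeping careful track of the two extra boundary terms in (\ref{3.17}). Let $\psi\in\Ker({\mathcal D})$, so that $D^N\psi=0$ and $\psi\big|_{\partial N}\in\Image(\pi_-)$. Since $M$ is a convex domain in $\R^n$ it is flat, so $\widehat{R}=0$ and the curvature term in (\ref{3.14})--(\ref{3.15}) disappears, leaving
\[
-\int_{\partial N}\langle\psi,\nabla^N_{e_n}\psi\rangle\:\dvol_{\partial N}=\int_N|\nabla^N\psi|^2\:\dvol_N+\frac12\int_N\langle\psi,(T_{00}+T_{0\alpha}\gamma^0\gamma^\alpha)\psi\rangle\:\dvol_N.
\]
The self-adjoint endomorphism $T_{0\alpha}\gamma^0\gamma^\alpha$ squares to $\left(-\sum_{\alpha=1}^n T_{0\alpha}T^{0\alpha}\right)\Id$, using $(\gamma^0)^2=-\Id$ and the Clifford relations, so $T_{00}+T_{0\alpha}\gamma^0\gamma^\alpha\ge\left(T_{00}-\sqrt{-\sum_{\alpha=1}^n T_{0\alpha}T^{0\alpha}}\right)\Id\ge0$ by the second hypothesis; hence the right-hand side above is nonnegative, and it vanishes only if $\nabla^N\psi=0$ and $\left(T_{00}-\sqrt{-\sum_{\alpha=1}^n T_{0\alpha}T^{0\alpha}}\right)|\psi|^2\equiv0$ on $N$.

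Next I would evaluate the same quantity from the boundary side via (\ref{3.17}). Since $D^{\partial N}_{Riem}$ and $\gamma^0\gamma^i$ both anticommute with $T$, they carry $\Image(\pi_-)$ into $\Image(\pi_+)$, so for $\psi\in\Ker({\mathcal D})$ the terms $\int_{\partial N}\langle\psi,D^{\partial N}_{Riem}\psi\rangle\:\dvol_{\partial N}$ and $\int_{\partial N}\langle\psi,\gamma^0\gamma^i\omega_{0ni}\psi\rangle\:\dvol_{\partial N}$ vanish exactly (cf.\ (\ref{2.17}) and the text following (\ref{3.17})). For the two remaining boundary terms I would use the pointwise estimates $\langle\psi,\gamma^n\gamma^{i}\widehat{\gamma}^n\widehat{\gamma}^{\widehat{j}}\widehat{A}_{\widehat{j}i}\psi\rangle\le(\partial f)^*H_{\partial M}|\psi|^2$, valid because $\partial M$ has nonnegative second fundamental form (\cite[Lemma 2.1]{Lott (2021)}, as in the proof of Proposition~\ref{2.15}), together with $|\langle\psi,\gamma^0\gamma^n\psi\rangle|\le|\psi|^2$, valid because $\gamma^0\gamma^n$ is a unitary involution on $E$. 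With (\ref{3.17}) this gives
\[
-\int_{\partial N}\langle\psi,\nabla^N_{e_n}\psi\rangle\:\dvol_{\partial N}\le-\frac12\int_{\partial N}\left(H_{\partial N}-(\partial f)^*H_{\partial M}-|\omega_{0ii}|\right)|\psi|^2\:\dvol_{\partial N}\le0,
\]
the last inequality by the hypothesis $H_{\partial N}-|\omega_{0ii}|\ge(\partial f)^*H_{\partial M}$.

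Comparing the two computations forces both to vanish, so $\nabla^N\psi=0$, $\left(T_{00}-\sqrt{-\sum_\alpha T_{0\alpha}T^{0\alpha}}\right)|\psi|^2\equiv0$ on $N$, and $\left(H_{\partial N}-(\partial f)^*H_{\partial M}-|\omega_{0ii}|\right)|\psi|^2\equiv0$ on $\partial N$. A parallel section of the bundle $E$ over the connected manifold $N$ is either identically zero or nowhere zero, so if $\psi\ne0$ then $|\psi|>0$ everywhere on $N$ and on $\partial N$, whence $T_{00}=\sqrt{-\sum_\alpha T_{0\alpha}T^{0\alpha}}$ everywhere on $N$ and $H_{\partial N}-|\omega_{0ii}|=(\partial f)^*H_{\partial M}$ everywhere on $\partial N$, contradicting the final hypothesis. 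Therefore $\psi=0$, i.e.\ $\Ker({\mathcal D})=0$.

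The argument is structurally the same as for Proposition~\ref{2.15}, so the main points needing care are not conceptual but bookkeeping. The first is identifying which boundary terms in (\ref{3.17}) vanish for $\psi\big|_{\partial N}\in\Image(\pi_-)$: the $\omega_{0ii}$-term does \emph{not} vanish in general, because $\gamma^0\gamma^n$ commutes (rather than anticommutes) with $T$ and so preserves $\Image(\pi_-)$; it is only controlled by $|\omega_{0ii}|\,|\psi|^2$, which is exactly why $|\omega_{0ii}|$ enters the hypotheses. The second is that one cannot reuse the step ``$\nabla^N$ is unitary, hence $|\psi|^2$ is constant'' from the proof of Proposition~\ref{2.15}, since in the Lorentzian case $\nabla^W$ (and therefore $\nabla^N$) is not a metric connection; this is harmless because the rigidity argument only needs that a nonzero parallel section is nowhere vanishing.
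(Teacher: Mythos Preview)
Your proof is correct and follows the same approach as the paper, which simply says ``The proof is similar to that of Proposition~\ref{2.15}.'' You have carefully filled in the details the paper omits: the handling of the two extra boundary terms in (\ref{3.17}) (in particular that the $\gamma^0\gamma^n$ term does not vanish and must be absorbed via $|\langle\psi,\gamma^0\gamma^n\psi\rangle|\le|\psi|^2$, explaining the appearance of $|\omega_{0ii}|$ in the hypotheses), and the observation that one cannot reuse the step ``$\nabla^N$ unitary $\Rightarrow$ $|\psi|^2$ constant'' from the Riemannian proof, replacing it with the weaker but sufficient fact that a nonzero parallel section is nowhere vanishing.
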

\begin{proof}
The proof is similar to that of Proposition \ref{2.15}.
\end{proof}

\begin{proposition} \label{3.22}
Whenever $\Ker({\mathcal D}) = 0$, the quasilocal energy can be
described as follows. 
Within the spinor module  associated to $\R^n$,
let $\{\epsilon_a\}_{a=1}^{2^{\frac{n}{2}-1}}$ be an orthonormal 
basis for $\Ker(\gamma^0 \gamma^n - I)$ and let 
$\{\epsilon^\prime_a\}_{a=1}^{2^{\frac{n}{2}-1}}$ be an orthonormal 
basis for $\Ker(\gamma^0 \gamma^n + I)$.
Extending $\epsilon_a$ and $\epsilon^\prime_a$ to constant-valued sections of
$S_M$, let
$\psi_a \in C^\infty(N; S_N)$ be a harmonic spinor on $N$ with boundary 
value in $(\partial f)^* \epsilon_a \cdot 1_C + \Ker(\gamma^0 \gamma^n + I)$ and let
$\psi^\prime_a \in C^\infty(N; S_N)$ be a harmonic spinor on $N$ with boundary 
value in $(\partial f)^* \epsilon^\prime_a \cdot 1_C + \Ker(\gamma^0 \gamma^n - I)$
Then
\begin{equation} \label{3.23}
{\mathcal E}_C = - 2^{- n/2} \sum_{a=1}^{2^{\frac{n}{2} - 1}} \int_{\partial N}
\left( \langle \psi_a, \nabla^N_{e_n} \psi_a \rangle  +
\langle \psi^\prime_a, \nabla^N_{e_n} \psi^\prime_a \rangle
\right) \dvol_{\partial N}.
\end{equation}
In particular, this is true
under the hypotheses of Proposition \ref{3.21}
\end{proposition}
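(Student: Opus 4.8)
The plan is to rerun the proof of Proposition~\ref{2.51} essentially verbatim, checking only that nothing breaks when $N$ is a spacelike hypersurface rather than a Riemannian manifold. Since here $\overline{M}=\R^{n,1}$ is flat and $M\subset\R^n$ is totally geodesic in $\overline{M}$ (the time-symmetric situation of Section~\ref{subsect3.1}, so $\widehat{\omega}_{\widehat{0}\widehat{\alpha}\sigma}=0$), in a parallel gauge all connection coefficients of $S_M$, hence of $S_M^*$, vanish; thus $f^*S_M^*$ is a flat trivial bundle of rank $2^{n/2}$ and the constant spinors $\epsilon_a\in\Ker(\gamma^0\gamma^n-I)$ and $\epsilon^\prime_a\in\Ker(\gamma^0\gamma^n+I)$ extend to $\nabla^N$-parallel sections of $S_M$. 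In this gauge $E=S_N\otimes f^*S_M^*$ is $2^{n/2}$ copies of $S_N$ with the product connection, $D^N=D^W\otimes 1$ with $D^W$ the formally self-adjoint operator of~(\ref{3.4}), and — decomposing the boundary identification $E\big|_{\partial N}\cong\End(S_N)\big|_{\partial N}$ along the $\gamma^0\gamma^n$-eigenspace splitting — the condition $\pi_+(\psi\big|_{\partial N})=0$ becomes a $\gamma^0\gamma^n$-chirality condition on each $S_N$-summand. Hence $\Ker({\mathcal D})=0$ is exactly what is needed for the boundary problems defining $\psi_a$ and $\psi^\prime_a$ to be uniquely solvable (``harmonic'' meaning $D^W\psi_a=0$); under the hypotheses of Proposition~\ref{3.21} this holds, which yields the ``in particular'' clause at the end.

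Next I would set $\Psi=2^{-n/2}\sum_a\bigl(\psi_a\otimes f^*\epsilon_a^*+\psi^\prime_a\otimes f^*\epsilon_a^{\prime,*}\bigr)\in C^\infty(N;E)$ and verify $\Psi\in{\mathcal C}_C$. First, $D^N(\psi_a\otimes f^*\epsilon_a^*)=(D^W\psi_a)\otimes f^*\epsilon_a^*=0$ since $\psi_a$ is harmonic and $f^*\epsilon_a^*$ is parallel, and likewise for the primed terms, so $D^N\Psi=0$. Second, $\pi_+(\Psi\big|_{\partial N})=1_C$: I would import directly the two facts established inside the proof of Proposition~\ref{2.51}, namely that under $S_M\otimes S_M^*\cong\Lambda^*T^*M$ (the identification using a timelike unit vector $e_0$; cf.\ Remark~\ref{3.2}) one has $\sum_a(\epsilon_a\otimes\epsilon_a^*+\epsilon^\prime_a\otimes\epsilon_a^{\prime,*})=2^{n/2}\cdot 1$, and that the involution $T$ of~(\ref{2.11}) acts by $+1$ on $\Ker(\gamma^0\gamma^n-I)\otimes\epsilon_a^*$ and by $-1$ on $\Ker(\gamma^0\gamma^n+I)\otimes\epsilon_a^*$ (and symmetrically with primes). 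Applying $\pi_+=\frac12(I+T)$ to each term of $\Psi\big|_{\partial N}$ then annihilates the $\Ker(\gamma^0\gamma^n+I)$-part of $\psi_a\big|_{\partial N}$, leaves $(\partial f)^*\epsilon_a\otimes(\partial f)^*\epsilon_a^*\cdot 1_C$, and summing gives $2^{-n/2}(\partial f)^*\bigl(\sum_a(\epsilon_a\otimes\epsilon_a^*+\epsilon^\prime_a\otimes\epsilon_a^{\prime,*})\bigr)\cdot 1_C=1_C$. Since $\Ker({\mathcal D})=0$, $\Psi$ is the unique element of ${\mathcal C}_C$, so $\psi_{min}=\Psi$ and ${\mathcal E}_C=-\int_{\partial N}\langle\Psi,\nabla^N_{e_n}\Psi\rangle\,\dvol_{\partial N}$.

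Finally I would expand this integral. Because $\nabla^N_{e_n}=\nabla^W_{e_n}\otimes 1$ in our gauge, $\langle\psi_a\otimes f^*\epsilon_a^*,\nabla^N_{e_n}(\psi_b\otimes f^*\epsilon_b^*)\rangle=\langle\psi_a,\nabla^W_{e_n}\psi_b\rangle\langle\epsilon_a^*,\epsilon_b^*\rangle$, and since $\{\epsilon_a\}\cup\{\epsilon^\prime_a\}$ is orthonormal in $S$ (the two $\gamma^0\gamma^n$-eigenspaces being mutually orthogonal) all off-diagonal and all mixed ``primed/unprimed'' terms vanish; collecting the diagonal terms and using the normalization constant exactly as in Proposition~\ref{2.51} yields~(\ref{3.23}).

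The step I expect to be the main obstacle is not any individual computation but verifying that the bookkeeping carried over from Proposition~\ref{2.51} — the $e_0$-dependent identification of $S_N\otimes S_N^*$ with $\Lambda^*T^*N$ from Remark~\ref{3.2}, its compatibility with $T$ and with the $\pi_\pm$-decomposition, and the precise coefficient in front of the sum — really is unchanged here. Time-symmetry is what makes this go through: it kills the extra Lorentzian terms (the $\frac12\omega_{0\alpha\sigma}\gamma^0\gamma^\alpha$ and $\widehat{\gamma}^{\widehat{0}}$-terms) in $\nabla^N$ and $D^N$, reducing everything to the spatial data already treated in Section~\ref{sect2}. (The case of odd $n$ is handled the same way after the doubling of Section~\ref{subsect2.6}.)
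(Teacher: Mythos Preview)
Your proposal is correct and takes the same approach as the paper, which simply says ``The proof is the same as for Proposition~\ref{2.51}.'' Your construction of $\Psi$, the verification that $\Psi\in{\mathcal C}_C$, and the diagonalization of the boundary integral are all as in Proposition~\ref{2.51}.

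One inaccuracy in your closing commentary deserves correction, though it does not affect the argument. Time-symmetry here means $M$ is totally geodesic in $\overline{M}$, which kills the $\widehat{\omega}_{\widehat{0}\widehat{\alpha}\sigma}\widehat{\gamma}^{\widehat{0}}\widehat{\gamma}^{\widehat{\alpha}}$ terms and makes $f^*S_M^*$ flat. It does \emph{not} kill the $\frac12\omega_{0\alpha\sigma}\gamma^0\gamma^\alpha$ terms: those come from the second fundamental form of $N$ in $\overline{N}$, which is not assumed to vanish (indeed Proposition~\ref{3.21} explicitly involves $\omega_{0ii}$). So things do not literally ``reduce to the spatial data already treated in Section~\ref{sect2}''; rather, the harmonic spinors $\psi_a$ satisfy $D^W\psi_a=0$ with $D^W$ the genuinely Lorentzian operator of~(\ref{3.4}), which still carries the $\omega_{0\alpha\sigma}$ contribution. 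Your proof already handles this correctly, since you work with $D^W$ and $\nabla^W$ throughout and never actually use the erroneous claim; only the final explanatory sentence needs amending.
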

\begin{proof}
The proof is the same as for Proposition \ref{2.51}.
\end{proof}

\subsection{Background space in $\R^{n,1}$} \label{subsect3.2}

We now let the background space be a more general submanifold of Minkowski space.
Let $\overline{M}$ be the Lorentzian $\R^{n,1}$ and suppose that $M$ is a
compact connected spacelike hypersurface-with-boundary in $\overline{M}$. 
In general, the term $\widehat{\omega}_{\widehat{0} \widehat{\alpha} \sigma}$ in
(\ref{3.9}) need not be zero and there is an apparent problem with formal
self-adjointness of $D^N$ on the interior of $N$. We can get around this by changing the inner product.
We use the fact that $S_M$ is the restriction of the trivial bundle
$S_{\overline{M}}$ to $M$.
Choose a constant timelike unit vector field ${\mathcal T}$ in $\overline{M}$.
There is a corresponding inner product $\langle \cdot, \cdot \rangle_{\mathcal T}$
on the trivial bundle
$S_{\overline{M}}$.
Namely, if $(\cdot, \cdot)$ is the indefinite
$\Spin(n,1)$-invariant bilinear form on 
$S_{\overline{M}}$ then the inner product on $S_{\overline{M}}$ is given by 
$\langle s_1, s_2 \rangle_{\mathcal T} = (\sqrt{-1} c({\mathcal T}) s_1, s_2)$,
where $c({\mathcal T})$ is Clifford multiplication by ${\mathcal T}$. 
We can then restrict $(S_{\overline{M}},  \langle \cdot, \cdot \rangle_{\mathcal T})$
to $M$, dualize to get an inner product $\langle \cdot, \cdot \rangle_{sa}$ on
$S_M^*$
 and pullback to $N$, to obtain an inner product
$\langle \cdot, \cdot \rangle_{E,sa}$ on $E = S_N \otimes f^* S^*_M$.

In order to express the boundary conditions, it will be convenient to write
things more intrinsically on $M$.
Let $U \subset M$ be a connected open subset on which an oriented orthonormal frame
$\{\widehat{e}_{\widehat{\alpha}}\}_{\widehat{\alpha} = 1}^n$ is defined and let
$m_0 \in U$ be a basepoint.
Since the
connection on $S_M^*$ is flat,
 we can locally write the expression
$\frac{1}{8} 
\widehat{\omega}_{{\widehat{\alpha}} {\widehat{\beta}} {\widehat{\sigma}}}
  [\widehat{\gamma}^{\widehat{\alpha}},
    \widehat{\gamma}^{\widehat{\beta}}] \otimes \widehat{\tau}^{\widehat{\sigma}} - 
\frac12
  \widehat{\omega}_{\widehat{0} \widehat{\alpha} {\widehat{\sigma}}}
  \widehat{\gamma}^{\widehat{0}} \widehat{\gamma}^{\widehat{\alpha}}
\otimes \widehat{\tau}^{\widehat{\sigma}}
$ in (\ref{3.7}) as
$g^{-1} dg$ for some $g : U \rightarrow \Spin(n,1)$. Let $\rho : \Spin(n,1) \rightarrow \SO(n,1)^+$ be the double cover.
If $\widehat{e}_{\widehat{0}}(m_0)$ is a unit normal vector to $M$ at $m_0$ then
we can partially normalize $g$ by specifying that $g(m_0)$ sends
$\widehat{e}_{\widehat{0}}(m_0)$ to ${\mathcal T}$ in $T_{m_0} \overline{M}$.
The remaining ambiguity in $g$ will be left multiplication
by a constant element of $\Spin(n)$.
On the other hand, a change of oriented orthonormal frame corresponds to
a map $\phi : U \rightarrow \SO(n)$ which lifts, using the spin structure, to a 
a map $\widetilde{\phi} : U \rightarrow \Spin(n)$.  The effect of 
the new frame is to
multiply $g$ on the right by $\widetilde{\phi}^{-1}$. 

If $U_1$ and $U_2$ are overlapping such domains then their corresponding
orthonormal frames will be related by a map $\phi_{12} : U_1 \cap U_2 \rightarrow
\SO(n)$, which we lift using the spin structure
to $\widetilde{\phi}_{12} : U_1 \cap U_2 \rightarrow \Spin(n)$.
Then the maps $g_i : U_i \rightarrow \Spin(n,1)$ are related on
$U_1 \cap U_2$ by 
$g_1 = u_{12} g_2 \widetilde{\phi}_{12}^{-1}$, for some constant element
$u_{12} \in \Spin(n)$. In effect, this uses the fact that 
$\widehat{e}_{\widehat{0}}$
is well-defined on $M$. Similarly if $U_1 \cap U_2$ intersects $\partial M$ then
using the fact that the unit normal 
$\widehat{e}_{\widehat{n}}$ is well-defined,
we can assume that $\widetilde{\phi}_{12} 
\Big|_{U_1 \cap U_2 \cap \partial M}$ takes values in
$\Spin(n-1)$.

On the domain $U$, put $A = g^* g$ and define 
a weighted inner product
$\langle \cdot, \cdot \rangle_{S_M^*,sa}$
by $\langle s_1, s_2 \rangle_{S_M^*,sa} = 
\langle s_1, A s_2 \rangle_{S_M^*}$ for
$s_1, s_2 \in S_M^*$.  Left multiplication of
$g$ by an element of $\Spin(n)$ doesn't change $A$, so $A$ is independent of
choices. 
The connection (\ref{3.7}) on $S_M^*$ is compatible with 
$\langle \cdot, \cdot \rangle_{S_M^*,sa}$, since the
ambient flat connection on $S_{\overline{M}}$ is compatible with
$\langle \cdot, \cdot \rangle_{\mathcal T}$.

Given a smooth spin map $f : (N, \partial N) \rightarrow (M, \partial M)$
which is an isometric spin diffeomorphism on each connected component of
$\partial N$,
we construct the Clifford module $E = S_N \otimes f^* S_M^*$ on $N$, with the
inner product
$\langle \cdot, \cdot \rangle_{E,sa} =
\langle \cdot, \cdot \rangle \Big|_{S_N} \otimes
f^* \langle \cdot, \cdot \rangle \Big|_{S_M^*,sa}$
The analog of (\ref{2.3}) is
\begin{align} \label{3.24}
& \int_N \langle D^N \psi_1, \psi_2 \rangle_{E,sa} \dvol_N -
\int_N \langle \psi_1, D^N \psi_2 \rangle_{E,sa} \dvol_N  = \\
& - \sqrt{-1} \int_{\partial N} \langle \psi_1, \gamma^n \psi_2 \rangle_{E,sa} 
\dvol_{\partial N}. \notag
\end{align}

\begin{proposition} \label{3.25}
If $N$ satisfies the dominant energy condition
$T_{00} \ge \sqrt{- \sum_{\alpha = 1}^n T_{0\alpha} T^{0 \alpha}}$ then
for any $\psi \in C^\infty(N; E)$ satisfying $D^N \psi = 0$, we have
\begin{equation} \label{3.26}
- \int_{\partial N} \langle \psi, \nabla^N_{e_n} \psi \rangle_{E,sa}
 \: \dvol_{\partial N}
\ge 0.
\end{equation}
\end{proposition}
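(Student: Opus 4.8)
The plan is to run Witten's Bochner argument, now carried out with respect to the weighted inner product $\langle \cdot, \cdot \rangle_{E,sa}$, which was introduced precisely so that $\nabla^N$ is metric-compatible and $D^N$ is formally self-adjoint on the interior of $N$. First I would establish the Weitzenb\"ock formula
\[
(D^N)^2 = (\nabla^N)^* \nabla^N + \frac12 \left( T_{00} + T_{0\alpha} \gamma^0 \gamma^\alpha \right),
\]
where $(\nabla^N)^*$ is the formal adjoint taken with respect to $\langle \cdot, \cdot \rangle_{E,sa}$. This is the analog of (\ref{3.14}), but with the key simplification that the twisting-curvature term coming from $f^* S_M^*$ is now absent: the connection (\ref{3.7}) on $S_M^*$ over $M$ is the restriction of the flat connection on the trivial bundle $S_{\overline{M}}$ over $\overline{M} = \R^{n,1}$, hence has vanishing curvature, so its pullback under $f$ contributes nothing to the curvature endomorphism. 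The remaining term $\frac12 ( T_{00} + T_{0\alpha} \gamma^0 \gamma^\alpha )$ arises, exactly as in \cite{Witten (1981)}, from the curvature of $\nabla^W$ combined with the Gauss--Codazzi equations for $N \subset \overline{N}$.

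Next I would apply this identity to a solution $\psi$ of $D^N \psi = 0$ and integrate against $\psi$ in $\langle \cdot, \cdot \rangle_{E,sa}$ over $N$. Since $D^N \psi = 0$, the left-hand side integrates to zero; since $\nabla^N$ is compatible with $\langle \cdot, \cdot \rangle_{E,sa}$, integrating the term $(\nabla^N)^* \nabla^N$ by parts (with $e_n$ the inward unit normal, as in (\ref{2.3})) produces $\int_N |\nabla^N \psi|^2_{E,sa} \: \dvol_N$ together with the boundary integral $\int_{\partial N} \langle \psi, \nabla^N_{e_n} \psi \rangle_{E,sa} \: \dvol_{\partial N}$. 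This yields the analog of (\ref{2.5}) and (\ref{3.15}),
\[
- \int_{\partial N} \langle \psi, \nabla^N_{e_n} \psi \rangle_{E,sa} \: \dvol_{\partial N} = \int_N |\nabla^N \psi|^2_{E,sa} \: \dvol_N + \frac12 \int_N \langle \psi, \left( T_{00} + T_{0\alpha} \gamma^0 \gamma^\alpha \right) \psi \rangle_{E,sa} \: \dvol_N .
\]
Since $\langle \cdot, \cdot \rangle_{E,sa}$ is positive definite, the first term on the right is nonnegative.

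It then remains to check that the curvature endomorphism on the right is pointwise nonnegative. The operator $\gamma^0 \gamma^\alpha$ acts only on the $S_N$-factor and is self-adjoint with respect to $\langle \cdot, \cdot \rangle_{E,sa}$: indeed $\gamma^\alpha$ is Hermitian, $\gamma^0$ is skew-Hermitian (being $\sqrt{-1}$ times the grading operator $\epsilon$, cf. (\ref{2.11})), and the two anticommute. From $\gamma^0 \gamma^\alpha \gamma^0 \gamma^\beta + \gamma^0 \gamma^\beta \gamma^0 \gamma^\alpha = 2 \delta^{\alpha\beta}$ one gets $(T_{0\alpha} \gamma^0 \gamma^\alpha)^2 = \left( - \sum_{\alpha} T_{0\alpha} T^{0\alpha} \right) \Id$, so the eigenvalues of $T_{0\alpha} \gamma^0 \gamma^\alpha$ are $\pm \sqrt{- \sum_\alpha T_{0\alpha} T^{0\alpha}}$ and hence
\[
\frac12 \left( T_{00} + T_{0\alpha} \gamma^0 \gamma^\alpha \right) \ge \frac12 \left( T_{00} - \sqrt{- \sum_{\alpha=1}^n T_{0\alpha} T^{0\alpha}} \right) \Id \ge 0
\]
pointwise, by the dominant energy condition. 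Both terms on the right of the displayed identity are then nonnegative, which gives (\ref{3.26}). The main obstacle is the first step: verifying that the Weitzenb\"ock identity and the subsequent integration by parts are genuinely consistent with the twisted, non-unitary geometry having been absorbed into $\langle \cdot, \cdot \rangle_{E,sa}$ --- in particular that the $f^* S_M^*$ twisting curvature really drops out, that $(\nabla^N)^*$ produces exactly $\int_{\partial N} \langle \psi, \nabla^N_{e_n} \psi \rangle_{E,sa} \: \dvol_{\partial N}$ with the correct sign, and that $\gamma^0\gamma^\alpha$ is self-adjoint for the weighted inner product. The positivity computation in the last step is the standard Witten algebra.
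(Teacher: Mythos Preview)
Your argument is correct and is essentially the paper's own proof: the paper observes that on the interior $D^N$ is $D^W \otimes \Id_{\C^{2^{n/2}}}$ (equivalent to your remark that the $f^*S_M^*$-twisting curvature vanishes since $S_M^*$ carries the restriction of the flat connection on $S_{\overline{M}}$), writes down the same Weitzenb\"ock identity (with adjoint taken in $\langle\cdot,\cdot\rangle_{E,sa}$), integrates by parts to obtain the displayed formula, and then simply says ``The proposition follows.'' Your proposal spells out the last positivity step --- the self-adjointness and spectrum of $T_{0\alpha}\gamma^0\gamma^\alpha$ --- which the paper leaves implicit as the standard Witten algebra.
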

\begin{proof}
On the interior of $N$, we can think of $D^N$ as
$D^W \otimes \Id_{\C^{2^{n/2}}}$. By the calculations in \cite{Witten (1981)},
in this representation we can write
\begin{equation} \label{3.27}
(D^N)^2 = 
\left( 
(\nabla^N)^*_{sa} \nabla^N + \frac12 (T_{00} + T_{0 \alpha} \gamma^0 \gamma^\alpha)
\right) \otimes \Id_{\C^{2^{n/2}}}.
\end{equation}
As $D^N \psi = 0$, after integrating by parts we obtain from (\ref{3.27}) that
\begin{align} \label{3.28}
& - \int_{\partial N} \langle \psi, \nabla^N_{e_n} \psi \rangle_{E,sa} 
\: \dvol_{\partial N} = \\
& \int_N \left( \langle \nabla \psi, \nabla \psi
\rangle_{E,sa} +
\frac12 \langle \psi, 
(T_{00} + T_{0 \alpha} \gamma^0 \gamma^\alpha) \psi \rangle_{E,sa} \right) 
\dvol. \notag
\end{align}
The proposition follows.
\end{proof}

To follow what was done in Section \ref{sect2}, we put $T = \gamma^0 \gamma^n \widehat{\gamma}^{\widehat{0}}
\widehat{\gamma}^{\widehat{n}}$, acting on $E \Big|_{\partial N}$.
We again have $T1=1$.
Note that $T$ 
may not be self-adjoint with respect to 
$\langle \cdot, \cdot \rangle_{E,sa}$.

Given $\sigma \in \Omega^*(\partial N)$ with $T \sigma = \sigma$,
there are two natural boundary conditions to impose on the equation
$D^N \psi = 0$; we could ask that $\psi \Big|_{\partial N} \in \sigma +
\Ker(T+I)$ or we could ask that 
$\psi \Big|_{\partial N} \in \sigma +
(\Ker(T-I))^\perp$. For concreteness, we take the second choice.

Consider the operator $D^N$ acting on elements
$\psi \in C^\infty(N; E)$ that satisfy $\psi \Big|_{\partial N} \in
(\Ker(T-I))^\perp$. This defines an elliptic boundary condition.
Let ${\mathcal D}$ be the corresponding operator, densely defined on
$C^\infty(N; E)$.
Here ${\mathcal D}$ may not be self-adjoint but it still 
has discrete spectrum with eigenspaces of finite multiplicity.

We wish to find some
$\psi \in C^\infty(N; E)$ so that
  $D^N \psi = 0$ and $\psi \Big|_{\partial N} \in  \sigma + (\Ker(T-I))^\perp$.
This is an elliptic boundary value problem.
Suppose that we can do this. Note that 
$\gamma^n \psi \Big|_{\partial N} \in  \gamma^n \sigma + (\Ker(T+I))^\perp$
 Put
\begin{equation} \label{3.29}
{\mathcal H} =
\left\{ \: \eta \in C^\infty(N;E) \: : \: D^N \eta = 0, 
\eta \Big|_{\partial N} \in \Ker(T+I) \right\}.
\end{equation} 
If $\eta \in {\mathcal H}$ then 
from (\ref{3.24}), we see that
$\int_{\partial N} \langle \sigma, \gamma^n \eta \rangle \: \dvol_{\partial N} = 0$.
Conversely, given $\sigma \in \Omega^*(\partial N)$ with 
$T\sigma = \sigma$, Fredholm theory implies that if 
$\int_{\partial N} \langle \sigma, \gamma^n \eta \rangle \: \dvol_{\partial N} = 0$
for all $\eta \in {\mathcal H}$ then there is some
$\psi \in C^\infty(N; E)$ so that
  $D^N \psi = 0$ and $\psi \Big|_{\partial N} \in
\sigma + (\Ker(T-I))^\perp$; c.f. \cite[Theorem 2.4.5]{Agranovich (1997)}. The section
$\psi$ is unique up to addition by elements of $\Ker({\mathcal D})$.

As before, $C$ is a nonempty union of connected components of $\partial N$.
We would like to take $\sigma = 1_C$. To be sure that $\psi$ exists, we may
have to slightly modify this choice.  Let 
$P_{\gamma^n {\mathcal H}}$ be 
orthogonal projection
from $\Omega^*(\partial N)$ to $\gamma^n {\mathcal H}$.
We put $\overline{1}_C = 1_C - 
P_{\gamma^n {\mathcal H}} 1_C$, take $\sigma = \overline{1}_C$
and construct $\psi$ accordingly.

\begin{definition} \label{3.30}
Put 
\begin{equation} \label{3.31}
{\mathcal C}_C = \left\{ \psi \in C^\infty(N; E) \: : \: D^N \psi = 0,
\psi \Big|_{\partial N} \in
\overline{1}_C + (\Ker(T-I))^\perp \right\}.
\end{equation}
  The quasilocal energy is
\begin{equation} \label{3.32}
{\mathcal E}_C = \inf_{\psi \in {\mathcal C}} \left( - \int_{\partial N} \langle \psi,
  \nabla^N_{e_n} \psi \rangle_{E,sa} \: \dvol_{\partial N} \right).
\end{equation}
\end{definition}

\begin{proposition} \label{3.33}
If $\overline{N} = \overline{M} = \R^{n,1}$ and $N = M$ then ${\mathcal E}_{\partial N} = 0$.
\end{proposition}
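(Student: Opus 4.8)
The plan is to pin $\mathcal{E}_{\partial N}$ down from both sides: a lower bound $\mathcal{E}_{\partial N}\ge 0$ from Proposition~\ref{3.25}, and an upper bound $\mathcal{E}_{\partial N}\le 0$ from a distinguished element of $\mathcal{C}_{\partial N}$ on which the energy functional vanishes. For the lower bound, observe that since $\overline{N}=\R^{n,1}$ is flat its Einstein tensor $T_{AB}$ vanishes identically, so the dominant energy condition $T_{00}\ge\sqrt{-\sum_\alpha T_{0\alpha}T^{0\alpha}}$ holds trivially. Proposition~\ref{3.25} then gives $-\int_{\partial N}\langle\psi,\nabla^N_{e_n}\psi\rangle_{E,sa}\,\dvol_{\partial N}\ge 0$ for every $\psi\in C^\infty(N;E)$ with $D^N\psi=0$, hence in particular for every $\psi\in\mathcal{C}_{\partial N}$; so $\mathcal{E}_{\partial N}\ge 0$ (and, in passing, $\mathcal{E}_{\partial N}>-\infty$). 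Note that, unlike in Proposition~\ref{2.23} and Lemma~\ref{3.11}, one cannot here identify $D^N$ with $d+d^*$: the connection $\nabla^W$ of \eqref{3.3} is the restriction of the ambient flat connection and need not be the Levi--Civita connection of the hypersurface $N\subset\R^{n,1}$, so a genuinely different argument is needed for the upper bound.

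For the upper bound I would exhibit a $\nabla^N$-parallel section. Because $\overline{N}=\overline{M}=\R^{n,1}$ and $N=M$ (so $f=\Id$), the bundle $E=S_N\otimes f^*S_M^*$ is $\End(S_N)\cong S_N\otimes S_N^*$, and $\nabla^N$ is the connection induced on $\End(S_N)$ by $\nabla^W$ — the $S_M^*$-factor carrying the dual of $\nabla^W$. Hence the identity section $\psi_0:=1\in\Lambda^0T^*N\subset\Lambda^*T^*N\cong\End(S_N)$, i.e. $\Id_{S_N}$, is $\nabla^N$-parallel; in particular $D^N\psi_0=0$ and $\nabla^N_{e_n}\psi_0=0$, so the integrand in \eqref{3.32} vanishes identically for $\psi_0$. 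Its boundary restriction is $\psi_0\big|_{\partial N}=1_{\partial N}$, which satisfies $T1_{\partial N}=1_{\partial N}$. Since $D^N\psi_0=0$ with $\psi_0\big|_{\partial N}=1_{\partial N}$, Green's formula \eqref{3.24} applied to $\psi_0$ and any $\eta\in\mathcal{H}$ forces $1_{\partial N}\perp\gamma^n\mathcal{H}$; thus the solvability obstruction for the datum $1_{\partial N}$ vanishes, no modification is needed, $\overline{1}_{\partial N}=1_{\partial N}$, and $\psi_0\in\mathcal{C}_{\partial N}$. Therefore $\mathcal{E}_{\partial N}\le-\int_{\partial N}\langle\psi_0,\nabla^N_{e_n}\psi_0\rangle_{E,sa}\,\dvol_{\partial N}=0$, and combining with the lower bound gives $\mathcal{E}_{\partial N}=0$.

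The step needing the most care is the identification of $\nabla^N$ with the $\End(S_N)$-connection induced by $\nabla^W$: one must check that the connection \eqref{3.7}, transferred to $S_M$ via the charge-conjugation identification $S_M^*\cong S_M$, is truly the dual of $\nabla^W$ on $S_M=S_N$, and it is precisely the hypotheses $\overline{N}=\overline{M}$ and $N=M$ (so that the two restricted ambient connections coincide and $f=\Id$) that make this hold. A minor point is the parity of $n$: for odd $n$ the boundary identification of \S\ref{subsect2.6} carries an extra $\Lambda^*T^*N$-summand, but $\psi_0=\Id_{S_N}$ still restricts to $1_{\partial N}$ in the relevant summand with the remaining components zero, so the argument is unchanged. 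Finally, the proof genuinely uses $C=\partial N$: for a proper subset $C\subsetneq\partial N$ the datum $1_C$ is only locally constant on $\partial N$ and so cannot be the boundary value of a globally parallel (hence globally constant) section of $E$ over the connected manifold $N$.
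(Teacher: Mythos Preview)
Your proof is correct and follows essentially the same strategy as the paper: exhibit the section $\psi_0=1$ (the identity of $\End(S_N)$) as a $\nabla^N$-parallel element of $\mathcal{C}_{\partial N}$ with vanishing boundary integral, then invoke Proposition~\ref{3.25} for the lower bound. The only difference is that the paper verifies $\nabla^N 1=0$ by an explicit exterior-algebra computation (showing $\nabla^N_\sigma = e_\sigma + \omega_{\alpha\beta\sigma}E^\alpha I^\beta + \omega_{0\alpha\sigma}(E^0 I^\alpha + E^\alpha I^0)$ on $\Omega^*(N)\oplus\tau^0\wedge\Omega^*(N)$), whereas you argue conceptually that the identity endomorphism is parallel for the induced $\End$-connection---which is a cleaner way to see the same fact.
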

\begin{proof}
In this case, the connection forms satisfy $\widehat{\omega} = \omega$. 
Suppose that $n$ is odd.
We can identify $E$ with $\Lambda^*(T^*N) + \tau^0 \wedge \Lambda^*(T^*N)$.
Putting $\gamma^\alpha = \sqrt{-1} (E^\alpha - I^\alpha)$, 
$\widehat{\gamma}^{\widehat{\alpha}} = E^\alpha + I^\alpha$,
$\gamma^0 = \sqrt{-1} (E^0 + I^0)$ and
$\widehat{\gamma}^{\widehat{0}} = - (E^0 - I^0)$, when acting on
$\Omega^*(N) \oplus \tau^0 \wedge \Omega^*(N)$ one can check that
\begin{equation} \label{3.34}
\nabla^N_\sigma = e_\sigma + \omega_{\alpha \beta \sigma} E^\alpha I^\beta 
+ \omega_{0 \alpha \sigma} (E^0 I^\alpha + E^\alpha I^0).
\end{equation}
In particular, $\nabla^N_\sigma 1 = 0$, so $D^N 1 = 0$. 
It follows that $\overline{1} = 1$, so
$1 \in {\mathcal C}$, with $- \int_{\partial N} \langle 1, 
\nabla^N_{e_n} 1 \rangle_{E,sa} \: \dvol_{\partial N} = 0$.
Proposition \ref{3.25} now implies that ${\mathcal E}_{\partial N} = 0$.
The proof when $n$ is even is similar.
\end{proof}

For an alternative approach that maintains self-adjointness, if $U \subset M$ is an
open set as before then on $U \cap \partial M$, we put
\begin{equation} \label{3.35}
T_{sa} = \left[ (\partial f)^*(g^*g) \right]^{- \frac12} \gamma^0 \gamma^n \widehat{\gamma}^{\widehat{0}}
\widehat{\gamma}^{\widehat{n}} \left[ (\partial f)^*(g^*g) \right]^{\frac12},
\end{equation}
which is now a self-adjoint 
idempotent with respect to
$\langle \cdot, \cdot \rangle_{E,sa}$.
To see that this is globally defined, as mentioned before
on an overlap $U_1 \cap U_2 \cap \partial M$ we can assume that
$\widetilde{\phi}_{12}$ commutes with 
$\widehat{\gamma}^{\widehat{0}}$ and
$\widehat{\gamma}^{\widehat{n}}$. 
As 
\begin{equation} \label{3.36}
((u_{12} g \widetilde{\phi}_{12}^{-1})^* u_{12} g 
\widetilde{\phi}_{12}^{-1} )^{\frac12} =
\widetilde{\phi}_{12} (g^* g)^{\frac12} \widetilde{\phi}_{12}^{-1},
\end{equation}
we obtain
\begin{align} \label{3.37}
& \left[(\partial f)^*(g_1^*g_1)\right]^{- \frac12} \gamma^0 \gamma^n \widehat{\gamma}^{\widehat{0}}
\widehat{\gamma}^{\widehat{n}} \left[(\partial f)^*(g_1^*g_1)\right]^{\frac12} = \\
& (\partial f)^* \widetilde{\phi}_{12}
\left\{
\left[(\partial f)^*(g_2^*g_2)\right]^{- \frac12} \gamma^0 \gamma^n \widehat{\gamma}^{\widehat{0}}
\widehat{\gamma}^{\widehat{n}} \left[(\partial f)^*(g_2^*g_2)\right]^{\frac12} \right\}
(\partial f)^* \widetilde{\phi}_{12}^{-1}. \notag
\end{align}
Taking into account that $\widetilde{\phi}_{12}$ encodes how spinors
transform, 
it follows that $T_{sa}$ is globally defined on $\partial N$.
We still have
$T_{sa} \gamma^n + \gamma^n T_{sa} = 0$.

\begin{lemma} \label{3.38}
Consider the operator $D^N$ on elements $\psi \in
C^\infty(N; E)$ satisfying the boundary condition
\begin{equation} \label{3.39}
(T_{sa} + I) \psi \Big|_{\partial N}  = 0.
\end{equation}
It is
formally self-adjoint.
\end{lemma}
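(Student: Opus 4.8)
The plan is to reduce the assertion to the vanishing of the boundary pairing that obstructs formal self-adjointness, and then read that off from the eigenbundle decomposition of $T_{sa}$.

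First I would invoke (\ref{3.24}): for any $\psi_1,\psi_2\in C^\infty(N;E)$, the difference $\int_N\langle D^N\psi_1,\psi_2\rangle_{E,sa}\,\dvol_N-\int_N\langle\psi_1,D^N\psi_2\rangle_{E,sa}\,\dvol_N$ equals $-\sqrt{-1}\int_{\partial N}\langle\psi_1,\gamma^n\psi_2\rangle_{E,sa}\,\dvol_{\partial N}$. So the lemma amounts to showing that $\langle\psi_1,\gamma^n\psi_2\rangle_{E,sa}$ vanishes identically on $\partial N$ whenever $\psi_1$ and $\psi_2$ both satisfy the boundary condition (\ref{3.39}).

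Next I would use the facts about $T_{sa}$ established just before the statement: $T_{sa}$ is a globally defined endomorphism of $E\big|_{\partial N}$, it is self-adjoint with respect to $\langle\cdot,\cdot\rangle_{E,sa}$, it squares to the identity, and it anticommutes with $\gamma^n$, i.e. $T_{sa}\gamma^n+\gamma^n T_{sa}=0$. (Self-adjointness and the involution property are exactly what conjugating $\gamma^0\gamma^n\widehat{\gamma}^{\widehat{0}}\widehat{\gamma}^{\widehat{n}}$ by $[(\partial f)^*(g^*g)]^{\pm 1/2}$ in (\ref{3.35}) buys, since $[(\partial f)^*(g^*g)]^{1/2}$ carries $\langle\cdot,\cdot\rangle_{E,sa}$ to the unweighted $\langle\cdot,\cdot\rangle_E$; global well-definedness is (\ref{3.36})--(\ref{3.37}).) From these, $E\big|_{\partial N}$ splits $\langle\cdot,\cdot\rangle_{E,sa}$-orthogonally as $V_+\oplus V_-$, the $(\pm1)$-eigenbundles of $T_{sa}$, and the anticommutation relation forces $\gamma^n(V_-)\subseteq V_+$. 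The boundary condition (\ref{3.39}) says precisely that $\psi_1\big|_{\partial N},\psi_2\big|_{\partial N}\in V_-$; hence $\gamma^n\psi_2\big|_{\partial N}\in V_+$, and since $V_+\perp V_-$ we conclude $\langle\psi_1,\gamma^n\psi_2\rangle_{E,sa}=0$ along $\partial N$. By the first step the two interior integrals agree, which is the claimed formal self-adjointness.

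The lemma itself is thus a short orthogonality argument; the substantive work --- already carried out before the statement --- is the verification that $T_{sa}$ is self-adjoint and involutive for the weighted inner product and globally defined, which in turn rests on the compatibility of the ambient flat connection on $S_{\overline{M}}$ with $\langle\cdot,\cdot\rangle_{\mathcal{T}}$ and on the transition-function identities (\ref{3.36})--(\ref{3.37}). I would also note, as with $\mathcal{D}$ in Section \ref{subsect2.2}, that ``formally self-adjoint'' here means symmetry on smooth sections obeying (\ref{3.39}); the passage to a genuine self-adjoint operator with compact resolvent is separate and follows from the elliptic boundary-value theory of \cite{Baer-Ballmann (2012)}.
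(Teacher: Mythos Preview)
Your proof is correct and is essentially the same argument as the paper's. The paper writes it as a chain of equalities --- replacing $\psi_2$ by $-T_{sa}\psi_2$, anticommuting $T_{sa}$ past $\gamma^n$, moving $T_{sa}$ to the other slot by self-adjointness, and replacing $T_{sa}\psi_1$ by $-\psi_1$ --- which is exactly the computation underlying your eigenbundle statement that $\gamma^n(V_-)\subseteq V_+\perp V_-$.
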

\begin{proof}
If $\psi_1, \psi_2 \in
C^\infty(N; E)$ satisfy
$(T_{sa}+ I) \psi_1 \Big|_{\partial N} = 
(T_{sa} + I) \psi_2 \Big|_{\partial N} = 0$
then 
\begin{align} \label{3.40}
\int_{\partial N} \langle \psi_1, \gamma^n \psi_2 \rangle_{E,sa} \dvol_{\partial N}
&  =
- \int_{\partial N} \langle \psi_1, \gamma^n T_{sa} \psi_2 \rangle_{E,sa} \dvol_{\partial N} \\
&  =
\int_{\partial N} \langle \psi_1, T_{sa} \gamma^n \psi_2 \rangle_{E,sa} \dvol_{\partial N} \notag \\
& =
\int_{\partial N}  \langle T_{sa} \psi_1, \gamma^n \psi_2 \rangle_{E,sa} \dvol_{\partial N} \notag \\
& =
- \int_{\partial N} \langle \psi_1, \gamma^n \psi_2 \rangle_{E,sa} \dvol_{\partial N},
\notag
\end{align}
so $\int_{\partial N} \langle \psi_1, \gamma^n \psi_2 \rangle_{E,sa} \dvol_{\partial N}$
vanishes.
Using (\ref{3.24}), the lemma follows.
\end{proof}

With reference to Lemma \ref{3.38}, let ${\mathcal D}$ be the self-adjoint
extension.  It has compact resolvent.

To find the right boundary value problem, we want a section $\sigma$ of
$E \Big|_{\partial N}$ so that $T_{sa}\sigma = \sigma$.  Identifying
$E \Big|_{\partial N}$ with $\End \left( S_{N} \Big|_{\partial N} \right)$, an initial
attempt would be the operator
$s_C = 2^{- n/2} 1_C \left[(\partial f)^* (g^* g)\right]^{- \frac12}$ when acting
on $S_{N} \Big|_{\partial N}$. We note that $(g^* g)^{- \frac12}$ is
an isometry from $(S_M^*, \langle \cdot, \cdot \rangle)$ to
$(S_M^*, \langle \cdot, \cdot \rangle_{sa})$.

More explicitly,
at a point $n \in \partial N$, let $\{v_a\}_{a=1}^{2^{\frac{n}{2}}}$ be
an orthonormal basis of $S_N$ and 
let $v_a^*$ denote inner product with respect to $v_a$.
Then we put
\begin{equation} \label{3.41}
s_C  = 2^{- n/2} 1_C (\partial f)^* (g^* g)^{- \frac12} \cdot \Id =
2^{- n/2} 1_C \sum_{a=1}^{2^{\frac{n}{2}}}
v_a \otimes (\partial f)^* (g^* g)^{- \frac12} v_a^*.
\end{equation}
If we divide $\{v_a\}_{a=1}^{2^{\frac{n}{2}}}$ into orthonormal bases for the
$\pm 1$-eigenspaces of $\gamma^0 \gamma^n$ then one sees that 
$T_{sa} s_C = s_C$. 

Let $\pi_{\pm}$ denote orthogonal projections
onto the $\pm 1$-eigenspaces of $T_{sa}$
acting on sections of $E \Big|_{\partial N}$. As before, we may have to slightly
modify the boundary data $s_C$ to ensure that we can solve the boundary value problem.
Put $\overline{s}_C = s_C - P_{\gamma^n \Ker(\mathcal D) \Big|_{\partial N}} s_C$.
Then there is some
$\psi \in C^\infty(N; E)$ so that
$D^N \psi = 0$ and 
$
\pi_+ \left( \psi \Big|_{\partial N} \right)
  = \overline{s}_C$.
The section $\psi$ 
is unique up to addition by elements of
$\Ker({\mathcal D})$. 
We define
${\mathcal E}^{sa}_C$ as in Definition \ref{2.19}, using
$\langle \cdot, \cdot \rangle_{E,sa}$.

\begin{proposition} \label{3.42}
If $N$ satisfies the dominant energy condition then ${\mathcal E}_C \ge 0$
and ${\mathcal E}^{sa}_C \ge 0$.
\end{proposition}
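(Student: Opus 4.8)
The proof I have in mind is immediate from Proposition~\ref{3.25}, which already carries all of the analytic content, so the plan is mostly bookkeeping. First I would recall the structure of the two quantities. By Definition~\ref{3.30}, ${\mathcal E}_C$ is the infimum of the functional $\psi \longmapsto - \int_{\partial N} \langle \psi, \nabla^N_{e_n} \psi \rangle_{E,sa} \: \dvol_{\partial N}$ over the affine space ${\mathcal C}_C$ of smooth sections $\psi$ of $E$ with $D^N \psi = 0$ and $\psi \Big|_{\partial N} \in \overline{1}_C + (\Ker(T-I))^\perp$; this space is nonempty because $\overline{1}_C = 1_C - P_{\gamma^n {\mathcal H}} 1_C$ was defined precisely so that $\int_{\partial N} \langle \overline{1}_C, \gamma^n \eta \rangle \: \dvol_{\partial N} = 0$ for all $\eta \in {\mathcal H}$, which by the Fredholm argument recalled before Definition~\ref{3.30} is exactly the solvability condition. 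Entirely analogously, ${\mathcal E}^{sa}_C$ (defined as in Definition~\ref{2.19}, using $\langle \cdot, \cdot \rangle_{E,sa}$ and the self-adjoint boundary condition built from $T_{sa}$) is the infimum of the same functional over the nonempty affine space of solutions $\psi$ of $D^N \psi = 0$ with $\pi_+ \left( \psi \Big|_{\partial N} \right) = \overline{s}_C$, where $\overline{s}_C = s_C - P_{\gamma^n \Ker({\mathcal D}) \Big|_{\partial N}} s_C$.

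The key step is then a single application of Proposition~\ref{3.25}: every $\psi$ in either of these affine spaces satisfies $D^N \psi = 0$, and no hypothesis on its boundary behaviour is needed in Proposition~\ref{3.25}. Since $N$ satisfies the dominant energy condition $T_{00} \ge \sqrt{- \sum_{\alpha = 1}^n T_{0\alpha} T^{0\alpha}}$ --- which is exactly the hypothesis of Proposition~\ref{3.25} --- we get $- \int_{\partial N} \langle \psi, \nabla^N_{e_n} \psi \rangle_{E,sa} \: \dvol_{\partial N} \ge 0$ for each such $\psi$. An infimum over a nonempty set of nonnegative numbers is a finite nonnegative number, so ${\mathcal E}_C \ge 0$ and ${\mathcal E}^{sa}_C \ge 0$, which is the claim.

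There is no serious obstacle here: the Weitzenböck identity (\ref{3.27}), the integration by parts (\ref{3.28}), and the positivity of the zeroth-order operator $T_{00} + T_{0\alpha} \gamma^0 \gamma^\alpha$ under the dominant energy condition --- whose extreme eigenvalues are $T_{00} \pm \sqrt{- \sum_\alpha T_{0\alpha} T^{0\alpha}}$ --- do all the work, and they are applied pointwise on the space of solutions. It is perhaps worth remarking that \emph{a priori} the functional being minimized is only a (not manifestly sign-definite) quadratic function on the affine space ${\mathcal C}_C$, so that ${\mathcal E}_C$ could in principle have been $- \infty$; Proposition~\ref{3.25} rules this out and fixes the sign simultaneously. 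The only genuine care needed is in matching conventions, namely checking that the weighted inner product $\langle \cdot, \cdot \rangle_{E,sa}$ used to define ${\mathcal E}_C$ and ${\mathcal E}^{sa}_C$ is the same one appearing in Proposition~\ref{3.25}, which it is by construction.
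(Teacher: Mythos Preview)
Your proof is correct and follows exactly the paper's approach: the paper's entire proof reads ``This follows from Proposition~\ref{3.25}.'' Your additional bookkeeping (nonemptiness of the affine spaces, matching the inner products) is fine and makes explicit what the paper leaves implicit.
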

\begin{proof}
This follows from Proposition \ref{3.25}.
\end{proof}

\begin{proposition} \label{3.43}
${\mathcal E}_C$ and ${\mathcal E}^{sa}_C$ only depend on $f$ through its boundary
value $\partial f$.
\end{proposition}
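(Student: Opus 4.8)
The plan is to follow the proof of Proposition \ref{2.49}: the construction of $\mathcal{E}_C$ and $\mathcal{E}^{sa}_C$ involves $f$ only through the pullback $f^* S^*_M$ (with its connection and inner product) and through the boundary data, and when the background space is flat Minkowski space the pullback $f^* S^*_M$ is the same bundle-with-connection for all $f$. The only new features compared with Proposition \ref{2.49} are that here one must also track the weighted inner product $\langle \cdot, \cdot \rangle_{E,sa}$ and the involutions $T$ and $T_{sa}$. Fix two spin maps $f_0, f_1 : (N, \partial N) \to (M, \partial M)$ with $\partial f_0 = \partial f_1 =: \partial f$; the aim is to identify the two resulting boundary value problems.

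The first step is to note that $S^*_M$, with the connection (\ref{3.7}) and the inner product $\langle \cdot, \cdot \rangle_{S^*_M, sa}$, is canonically a trivial flat Hermitian bundle. Since $\overline{M} = \R^{n,1}$ is flat and contractible, $S_{\overline{M}}$, hence $S^*_{\overline{M}}$, admits a global parallel frame; the connection (\ref{3.7}) on $S^*_M$ is the restriction of this ambient flat connection, so $S^*_M$ has a global flat frame, in which $\langle \cdot, \cdot \rangle_{S^*_M, sa}$ --- parallel for (\ref{3.7}) --- is a constant Hermitian form $h$. Thus $S^*_M$ with its connection and $\langle \cdot, \cdot \rangle_{S^*_M, sa}$ is isomorphic to the trivial flat bundle over $M$ with metric $h$, and pulling back by either $f_0$ or $f_1$ gives the trivial flat bundle over $N$ with metric $h$, independently of the map. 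Consequently $E = S_N \otimes f^* S^*_M$, the connection $\nabla^N$ of (\ref{3.9}), the inner product $\langle \cdot, \cdot \rangle_{E,sa}$, and the Dirac-type operator $D^N$ are, for $f_0$ and for $f_1$, the same objects up to the canonical bundle map $\Phi$ that is the identity on the $S_N$ factor and on the trivialized $f^* S^*_M$ factor; this $\Phi$ is $\langle \cdot, \cdot \rangle_{E,sa}$-unitary and intertwines $D^N$ and $\nabla^N_{e_n}$.

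The second step is to verify that $\Phi$ also matches the boundary data. The chain of identifications $E \big|_{\partial N} \cong \Lambda^* T^* N \big|_{\partial N}$ is built from the isometry $\partial f$, the unit normal $e_n$ of $\partial N$ in $N$, and the unit normal $\widehat{e}_{\widehat{n}}$ of $\partial M$ in $M$; it never involves $df$ in directions transverse to $\partial N$, so it and the datum $1_C$ depend only on $\partial f$, and the identifications for $f_0$ and $f_1$ coincide under $\Phi$. Likewise $T = \gamma^0 \gamma^n \widehat{\gamma}^{\widehat{0}} \widehat{\gamma}^{\widehat{n}}$ is built from Clifford multiplication by $e_n$, by the unit normal $\widehat{e}_{\widehat{0}}$ of $M$ in $\R^{n,1}$, and by $\widehat{e}_{\widehat{n}}$, all evaluated along $(\partial f)(\partial N) \subset \partial M$; hence $T$, the space $\mathcal{H}$ of (\ref{3.29}), and $\overline{1}_C$ depend only on $\partial f$. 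For the self-adjoint version one uses in addition that $g^* g$ is, as shown around (\ref{3.35})--(\ref{3.37}), a globally well-defined function $A$ on $M$, so that $(\partial f)^* A$ --- and with it $s_C$ of (\ref{3.41}), $\overline{s}_C$, and $T_{sa}$ of (\ref{3.35}) --- depends only on $\partial f$. It follows that $\Phi$ carries the space $\mathcal{C}_C$ for $f_0$ onto that for $f_1$, both in the sense of (\ref{3.31}) and in the self-adjoint setting defined after (\ref{3.41}), while preserving the functional $- \int_{\partial N} \langle \psi, \nabla^N_{e_n} \psi \rangle_{E,sa} \, \dvol_{\partial N}$; taking the infimum in (\ref{3.32}) and in the corresponding definition of $\mathcal{E}^{sa}_C$ gives the proposition.

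The step I expect to be the main obstacle is this last bookkeeping, in particular confirming that the factors $(\partial f)^*(g^* g)^{\pm 1/2}$ that appear in $\langle \cdot, \cdot \rangle_{E,sa}$, in $T_{sa}$, and in $s_C$ --- which are a priori sensitive to the behavior of $f$ in the interior --- depend on $\partial f$ alone. This rests on the global well-definedness of $A = g^* g$ on $M$ established earlier, together with a careful check that no transverse derivative of $f$ is concealed in the identification $E \big|_{\partial N} \cong \Lambda^* T^* N \big|_{\partial N}$.
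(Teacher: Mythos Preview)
Your proposal is correct and follows essentially the same approach as the paper, which simply says ``The proof is the same as for Proposition \ref{2.49}.'' You have supplied the additional bookkeeping that the Lorentzian setting requires---tracking the weighted inner product $\langle \cdot, \cdot \rangle_{E,sa}$, the involutions $T$ and $T_{sa}$, and the boundary datum $s_C$---but the underlying idea (triviality of the flat pullback bundle $f^* S_M^*$ over $N$, so that only $\partial f$ enters) is identical.
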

\begin{proof}
The proof is the same as for Proposition \ref{2.49}.
\end{proof}

The quasilocal energy ${\mathcal E}_C$ depends on the
choice of constant timelike unit vector field ${\mathcal T}$ in 
$\R^{n,1}$.  By minimizing over such choices, we obtain the quasilocal mass.

\begin{definition} \label{3.44}
The quasilocal mass is ${\mathcal M}_C = \inf_{{\mathcal T}} {\mathcal E}_C$.
\end{definition}

\subsection{Background space in a product manifold} \label{subsect3.3}

In Section \ref{3.2}, when the background space was a subspace of
$\R^{n,1}$, we effectively performed a gauge transformation in order to
make $\widehat{\nabla}^W$ locally trivial.
This had the effect of making the Dirac-type operator
$D^N$ formally self-adjoint on the interior of $N$. 
Looking at the connection (\ref{3.7}), one sees that what one
really needs is a gauge transformation to make 
$\widehat{\nabla}^W$ unitary.  This
can be achieved if the connection on the ambient
spinor bundle $S_{\overline{M}}$ is unitary.  The easiest situation in which
this is guaranteed is when the Lorentzian manifold
$\overline{M}$ is an isometric product 
$\R \times X$ for some $n$-dimensional Riemannian spin manifold $X$.
Of course, this includes the case when $\overline{M} = \R^{n,1}$. 

Let $M$ be a spacelike hypersurface with boundary in $\overline{M} = \R \times X$.
As in Section \ref{3.2}, we can find local maps $g : U \rightarrow
\Spin(n)$ on $M$ so that 
\begin{equation} \label{3.45}
g \left( \frac{1}{8} 
\widehat{\omega}_{{\widehat{\alpha}} {\widehat{\beta}} {\widehat{\sigma}}}
  [\widehat{\gamma}^{\widehat{\alpha}},
    \widehat{\gamma}^{\widehat{\beta}}] \otimes \widehat{\tau}^{\widehat{\sigma}} - 
\frac12
  \widehat{\omega}_{\widehat{0} \widehat{\alpha} {\widehat{\sigma}}}
  \widehat{\gamma}^{\widehat{0}} \widehat{\gamma}^{\widehat{\alpha}}
\otimes \widehat{\tau}^{\widehat{\sigma}} \right) g^{-1} + g \: dg^{-1}
\end{equation} is a
$1$-form with values in skew-Hermitian matrices 
(as opposed to its vanishing in Section \ref{3.2}). 
We can assume that $g(m_0)$ sends $\widehat{e}_{\widehat{0}}(m_0)$ to the
timelike unit vector $\partial_t$ in $T_{m_0} \overline{M}$.
On an overlap $U_1 \cap U_2$,
the maps $g_1$ and $g_2$ are related by $g_1 = u_{12} g_2 
\widetilde{\phi}_{12}^{-1}$ for maps $u_{12}, \widetilde{\phi}_{12} :
U_1 \cap U_2 \rightarrow \Spin(n)$, where $u_{12}$ is covariantly
constant and $\widetilde{\phi}_{12}$ comes from the change of local
orthonormal frame.

On the domain $U \subset M$, we put $A = g^* g$ and define an inner
product $\langle \cdot, \cdot \rangle_{S^*_M,sa}$ by
$\langle s_1, s_2 \rangle_{S^*_M,sa} = \langle s_1, A s_2 \rangle_{S^*_M}$.

\begin{proposition} \label{3.46}
Suppose that $X$ has nonnegative curvature operator.  If
$\psi \in C^\infty(N; E)$ satisfies $D^N \psi = 0$ and
\begin{equation} \label{3.47}
  2 \left( T_{00} - \sqrt{- \sum_{\alpha = 1}^n T_{0\alpha} T^{0 \alpha}}
  \right) \ge
  | \Lambda^2 df | (f^* R_X)
\end{equation}
then $- \int_{\partial N} 
\langle \psi, \nabla^E_{e_n} \psi \rangle \ge 0$.
\end{proposition}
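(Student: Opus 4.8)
The plan is to run the argument of Proposition~\ref{3.12} in the product background $\overline{M}=\R\times X$, using throughout the weighted inner product $\langle\cdot,\cdot\rangle_{E,sa}$ introduced in Section~\ref{subsect3.3}. The three ingredients are: a Lichnerowicz--Weitzenb\"ock formula for $(D^N)^2$ that separates off the Lorentzian curvature term $\frac12(T_{00}+T_{0\alpha}\gamma^0\gamma^\alpha)$ of Witten~\cite{Witten (1981)} together with the twisting curvature of $f^*S^*_M$; the fact that, after the gauge transformation of Section~\ref{subsect3.3}, the connection $\nabla^N$ is unitary for $\langle\cdot,\cdot\rangle_{E,sa}$ (the point of~(\ref{3.45}) being that the gauged connection $1$-form is skew-Hermitian), so that $D^N$ is formally self-adjoint and one may integrate by parts exactly as in~(\ref{3.24}); and the Goette--Semmelmann estimate~(\ref{2.9}), now applied to $X$.

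First I would record the Weitzenb\"ock identity. Since $\overline{M}$ is a metric product, the spin connection of $\overline{M}$ in a product-adapted frame involves only the connection of $X$, so the curvature of $S_{\overline{M}}$ --- hence of $S^*_{\overline{M}}$, hence, upon restriction to $M$ and pullback by $f$, of $f^*S^*_M$ --- is pulled back from the curvature of $S^*_X$; the gauge transformation~(\ref{3.45}) only conjugates it by an $\Spin(n)$-valued function, which does not affect what follows. Combining Witten's computation for the $S_N$ factor (as in~(\ref{3.14}) and~(\ref{3.27})) with this twisting curvature gives the analogue of~(\ref{3.14}),
\begin{equation}
(D^N)^2=(\nabla^N)^*_{sa}\nabla^N+\frac12\big(T_{00}+T_{0\alpha}\gamma^0\gamma^\alpha\big)-\frac14[\gamma^\sigma,\gamma^\tau]\Big(\frac18\widehat{R}_{\widehat{\alpha}\widehat{\beta}\sigma\tau}[\widehat{\gamma}^{\widehat{\alpha}},\widehat{\gamma}^{\widehat{\beta}}]\Big),
\end{equation}
where now $\widehat{R}$ is the Riemann curvature of $X$, pulled back to $N$ along $N\to M\hookrightarrow\R\times X\to X$. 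Note that the two curvature terms act on different tensor factors of $E=S_N\otimes f^*S^*_M$, so they can be estimated independently. Setting $D^N\psi=0$, pairing $(D^N)^2\psi=0$ with $\psi$ in $\langle\cdot,\cdot\rangle_{E,sa}$ and integrating by parts as for~(\ref{3.28}) yields
\begin{equation}
-\int_{\partial N}\langle\psi,\nabla^E_{e_n}\psi\rangle_{E,sa}\,\dvol_{\partial N}=\int_N|\nabla^N\psi|^2_{E,sa}\,\dvol_N+\int_N\Big\langle\psi,\Big(\frac12(T_{00}+T_{0\alpha}\gamma^0\gamma^\alpha)-\frac{1}{32}\widehat{R}_{\widehat{\alpha}\widehat{\beta}\sigma\tau}[\gamma^\sigma,\gamma^\tau][\widehat{\gamma}^{\widehat{\alpha}},\widehat{\gamma}^{\widehat{\beta}}]\Big)\psi\Big\rangle_{E,sa}\dvol_N.
\end{equation}

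The first term on the right is $\ge 0$. For the second, $\sum_\alpha T_{0\alpha}\gamma^0\gamma^\alpha$ is self-adjoint for $\langle\cdot,\cdot\rangle_{E,sa}$ and, using $(\gamma^0)^2=-1$ and $\gamma^\alpha\gamma^\beta+\gamma^\beta\gamma^\alpha=2\delta^{\alpha\beta}$, squares to $\big(-\sum_\alpha T_{0\alpha}T^{0\alpha}\big)\Id$, so $\frac12(T_{00}+T_{0\alpha}\gamma^0\gamma^\alpha)\ge\frac12\big(T_{00}-\sqrt{-\sum_\alpha T_{0\alpha}T^{0\alpha}}\big)\Id$, exactly as in~\cite{Witten (1981)} and the proof of Proposition~\ref{3.25}; and by~(\ref{2.9}) applied to $X$ --- this is where nonnegativity of the curvature operator of $X$ is used --- one has $\frac{1}{32}\widehat{R}_{\widehat{\alpha}\widehat{\beta}\sigma\tau}[\gamma^\sigma,\gamma^\tau][\widehat{\gamma}^{\widehat{\alpha}},\widehat{\gamma}^{\widehat{\beta}}]\le\frac14|\Lambda^2 df|(f^*R_X)\,\Id_E$. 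Hence the interior integrand is bounded below by $\big(\frac12(T_{00}-\sqrt{-\sum T_{0\alpha}T^{0\alpha}})-\frac14|\Lambda^2 df|(f^*R_X)\big)|\psi|^2_{E,sa}$, which is $\ge 0$ by the hypothesis~(\ref{3.47}). This gives $-\int_{\partial N}\langle\psi,\nabla^E_{e_n}\psi\rangle\ge 0$.

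The step I expect to be the main obstacle is the precise bookkeeping in the Weitzenb\"ock formula, specifically the identification of the twisting curvature and of the distortion factor. Because $M$ is a spacelike hypersurface of $\R\times X$ and not a slice $\{t_0\}\times X$, its unit normal in $\overline{M}$ is not $\partial_t$, so one must check carefully that the curvature of $S^*_M$ is still expressed purely through $R_X$ and $X$-frame Clifford elements; this does hold because $S_{\overline{M}}$ with its spin connection is pulled back from $S_X$, but it should be verified rather than asserted. One must also pin down that $|\Lambda^2 df|$ and $f^*R_X$ in~(\ref{3.47}) are to be read through the projection $p:\R\times X\to X$ --- that is, the relevant distortion is that of $p\circ f:N\to X$ --- and check that the derivation of~(\ref{2.9}) goes through with $p\circ f$ in place of $f$. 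The remaining points (self-adjointness of $D^N$, the integration by parts, the Witten bound on the $T$-term) are routine transcriptions of the proofs of Propositions~\ref{3.12} and~\ref{3.25}.
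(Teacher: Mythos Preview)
Your proposal is correct and is essentially the paper's approach: the paper's proof is a single sentence --- ``After making local gauge transformations, the proof is the same as that of Proposition~\ref{3.25}'' --- and what you have written is precisely the unpacking of that sentence, correctly merging the twisting-curvature estimate from Proposition~\ref{3.12} with the weighted inner product and gauge transformation of Section~\ref{subsect3.2}. The technical worries you flag (that the curvature of $S^*_M$ is pulled back from $X$ because $S_{\overline{M}}$ is pulled back from $S_X$ on a metric product, and that the distortion factor should be read through the projection to $X$) are the right things to check and resolve exactly as you indicate.
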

\begin{proof}
After making local gauge transformations, the proof is the same as that of
Proposition \ref{3.25}. 
\end{proof}

We can define ${\mathcal E}_C$ and ${\mathcal E}^{sa}_C$ as in Section \ref{3.2}.
The analogs of Propositions \ref{3.42} and \ref{3.43} hold.

\end{document}